\numberwithin{equation}{section}
 \newcommand{\set}[1]{\left\{#1\right\}}
\newcommand{\Bigset}[1]{\Bigl\{ #1 \Bigr\}}
\newcommand{\bigabs}[1]{\bigl| #1 \bigr|}
\newcommand{\Bigabs}[1]{\Bigl| #1 \Bigr|}
\newcommand{\floor}[1]{\left\lfloor #1 \right\rfloor}
\newcommand{\bigbrac}[1]{\bigl( #1 \bigr)}
\newcommand{\Bigbrac}[1]{\Bigl( #1 \Bigr)}
\newcommand{\norm}[1]{\left\| #1\right\|}
\newcommand{\rd}{\,\mathrm{d}}
\newcommand{\twosum}[2]{ \sum_{\substack{#1\\ #2}}}
\newcommand{\Z}{\mathbb{Z}}
\newcommand{\R}{\mathbb{R}}
\newcommand{\C}{\mathbb{C}}
\newcommand{\T}{\mathbb{T}}
\newcommand{\supp}{\mathrm{supp}}
\newcommand{\eps}{\varepsilon}
\let\@@pmod\pmod
\DeclareRobustCommand{\pmod}{\@ifstar\@pmods\@@pmod}
\def\@pmods#1{\mkern4mu({\operator@font mod}\mkern 6mu#1)}
\newcommand{\major}{\mathfrak M}
\newcommand{\minor}{\mathfrak m}
\newcommand{\mrd}[1]{\,(\mathrm{mod}\,#1)}
\renewcommand{\bar}{\overline}
\renewcommand{\hat}{\widehat}
\renewcommand{\leq}{\leqslant}
\renewcommand{\geq}{\geqslant}
\renewcommand{\epsilon}{\varepsilon}
\newcommand{\mmod}[1]{\,\,(\text{\rm mod}\,\, #1)}
\newtheorem{theorem}{Theorem}[section]
\newtheorem{conjecture}[theorem]{Conjecture}
\newtheorem{proposition}[theorem]{Proposition}
\newtheorem{lemma}[theorem]{Lemma}
\theoremstyle{definition}
\newtheorem*{remark}{Remark}
\numberwithin{theorem}{section}
\begin{document}
\title[Local divisor correlations in almost all short intervals]{Local divisor correlations in almost all short intervals}

\author{Javier Pliego}

\address{Dipartimento di Matematica, Universit\`a degli Studi di Genova,  Via Dodecaneso 35, 16146 Genova, Italy}

\email{javier.pliego.garcia@edu.unige.it}

\author{Yu-Chen Sun}

\address{School of Mathematics, University of Bristol, Bristol, BS8 1UG, England}

\email{yuchensun93@163.com}

\author{Mengdi Wang}

\address{\'{E}cole Polytechnique F\'{e}d\'{e}rale de Lausanne (EPFL), Lausanne, Switzerland}
\email{mengdi.wang@epfl.ch}

\maketitle

\begin{abstract}

Let $ k,l \geq 2$ be natural numbers,  and let $d_k,d_l$ denote the $k$-fold and $l$-fold divisor functions, respectively. We analyse the asymptotic behavior of the sum $\sum_{x<n\leq  x+H_1}d_k(n)d_l(n+h)$. More precisely, let $\eps>0$ be a small fixed number and let $\Phi(x)$ be a positive function that tends to infinity arbitrarily slowly as $x\to \infty$. We then show that whenever $H_1\geq(\log x)^{\Phi(x)}$ and $(\log x)^{1000k\log k}\leq H_2\leq H_1^{1-\eps }$,  the expected asymptotic formula holds for almost all $x\in[X,2X]$ and almost all $1\leq h\leq H_2$.

\end{abstract}

\section{Introduction}

Let $k\geq 2$ be a natural number. Define the $k$-fold divisor function $d_k(n)$ as the number of ways that $n$ can be expressed as the product of $k$ natural numbers. When $k$  is moderately large, the behavior of $d_k$ exhibits in many ways a resemblance to the von Mangoldt function, Heath-Brown's identity \cite{HB} providing an explicit connection between both objects. Consequently, there is an analogous version of the \emph{Hardy-Littlewood prime tuple conjecture} in the context of divisor functions. We record herein the statement from \cite[Conjecture 3]{CG} for the sake of transparency.

\begin{conjecture}[Divisor correlation conjecture]
	Suppose that $k,l\geq 2$ are natural numbers, and $ h\in\Z\setminus\{0\}$. Then one has
	\[
	\sum_{X<n\leq 2X} d_k(n) d_l(n+h) =P_{k,l,h} (\log X) X +O(X^{1/2+o(1)})
	\]
	as $X\to\infty$ for some polynomial $P_{k,l,h}$ of degree $k+l-2$.
\end{conjecture}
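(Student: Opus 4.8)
Since no unconditional proof of this statement is known for $k,l\ge 3$, what follows is the line of attack one would take; it currently runs to completion only when $\{k,l\}\subseteq\{2,3\}$. The first step is to pin down $P_{k,l,h}$. Expand $d_k(n)=\sum_{m_1\cdots m_k=n}1$ and $d_l(n+h)$ likewise, model the $m_i$ and the divisors of $n+h$ as jointly equidistributed in residue classes, and collect the resulting local densities $\sigma_p(h)$ into the singular series $\mathfrak S_{k,l}(h)=\prod_p\sigma_p(h)$; equivalently, study the shifted Dirichlet series $D_h(s)=\sum_{n\ge1}d_k(n)d_l(n+h)n^{-s}$, whose conjectural analytic continuation has a pole of order $k+l-1$ at $s=1$, so that a Perron/contour argument turns its Laurent expansion there into $P_{k,l,h}$, of degree $k+l-2$, with leading coefficient $\mathfrak S_{k,l}(h)/((k-1)!\,(l-1)!)$. (This is the divisor-function shadow of the Hardy--Littlewood tuple conjecture, the bridge between the two being Heath-Brown's identity.) The theorem asserts that all this is correct up to an error $O(X^{1/2+o(1)})$.

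The second step is to reduce the correlation to equidistribution of a single divisor function in arithmetic progressions. Writing $d_k=d_{k-1}*1$ and grouping $a=m_1\cdots m_{k-1}$, $b=m_k$, one has
\[
\sum_{X<n\le 2X}d_k(n)d_l(n+h)=\sum_{a}d_{k-1}(a)\sum_{b:\,ab\in(X,2X]}d_l(ab+h),
\]
a weighted average over the modulus $a\le X$ of $d_l$ along the progression $n\equiv h\pmod a$. For $a$ in the range where $d_l$ is known to equidistribute in residue classes with a power-saving remainder, and symmetrically, after opening $d_l$ in the same way and swapping roles, for the dual range, one extracts exactly the predicted main term, an $\mathfrak S_{k,l}(h)$-weighted polynomial in $\log X$ times $X$. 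What is left is the range of intermediate moduli, in which neither the $d_k$-equation nor the $d_l$-equation has room — equivalently the balanced configuration, all $m_i\asymp X^{1/k}$ and all divisors of $n+h$ of size $\asymp X^{1/l}$, i.e. the additive divisor problem $\sum_{n\le X}d_k(n)d_l(n+h)$ stripped of any helpful auxiliary length.

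The third step is to bring analytic machinery to bear on that balanced piece. The classical route writes the relevant $d_l$-in-progressions sum via the Estermann-type function $E_l(s,a/q)=\sum_nd_l(n)e(na/q)n^{-s}$, or, after opening $d_l$ by the hyperbola method and detecting the linear equation $m_1\cdots m_k-uv=h$ by the delta-method of Duke--Friedlander--Iwaniec (or Heath-Brown's variant), one Poisson-summates the long variable; in either case the polar/zero-frequency part reproduces the main term (rebuilding $\mathfrak S_{k,l}(h)$ after summation over the auxiliary modulus), while the remainder becomes an average of complete exponential sums that are $d_{k-1}$- and $d_{l-1}$-weighted relatives of Kloosterman sums. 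When $l=2$ these are genuine Kloosterman sums and one can invoke the Kuznetsov formula and the spectral large sieve; in general one would want square-root cancellation in these higher-dimensional families, or subconvex bounds for shifted moments of the associated $GL(k)$ $L$-functions. Reassembling the main contributions over all splittings and all residue data then yields $P_{k,l,h}(\log X)\,X$.

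The decisive obstacle is exactly this last cancellation. For $k=l=2$ the balanced sum is the Ingham/Estermann additive divisor problem, where the Deshouillers--Iwaniec spectral large sieve, as exploited by Motohashi, already delivers the exponent $1/2+o(1)$; the pair $\{2,3\}$ has been reached by Topacogullari along the same lines with heavier bookkeeping. But for $k,l\ge3$ the off-diagonal is a bilinear form in Kloosterman sums twisted by $d_{k-1}$ and $d_{l-1}$ whose required square-root cancellation lies beyond Weil's bound and beyond what the Kuznetsov formula currently yields — it is essentially the same barrier that leaves open the distribution of $d_3$ in arithmetic progressions to moduli $q\ge X^{1/2+\eps}$, where only partial gains (Friedlander--Iwaniec, Heath-Brown, Fouvry--Kowalski--Michel) are available. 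Absent a genuinely new input of that calibre — nontrivial estimates for the pertinent higher-dimensional algebraic exponential sums, or for shifted moments of $GL(k)$ $L$-functions — the conjecture stays open for $k,l\ge3$, which is precisely why the present paper settles instead for the strictly softer almost-all short-interval regime, where second-moment (variance) bounds together with positivity stand in for the unavailable pointwise cancellation.
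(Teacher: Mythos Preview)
The statement you are addressing is labelled a \emph{Conjecture} in the paper, and the paper offers no proof of it; on the contrary, the authors explicitly record that it ``remains wide open for $k,l\geq 3$'' and that the present work settles instead the almost-all short-interval problem of Theorem~\ref{thm1.1}. So there is no ``paper's own proof'' to compare against, and what you have written is, as you yourself say, a sketch of a line of attack rather than a proof.

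That said, there is a factual slip in your proposal that should be corrected. You assert that for $k=l=2$ the Deshouillers--Iwaniec/Motohashi machinery ``already delivers the exponent $1/2+o(1)$'', and that Topacogullari reaches the pair $\{2,3\}$ ``along the same lines''. Neither is true with the error term the conjecture demands. As the paper itself records, Deshouillers--Iwaniec obtain $O(X^{2/3+o(1)})$ for $k=l=2$, and for $l=2$, $k\geq 3$ the cited works of Drappeau and Topacogullari give only $O(X^{1-\delta_k})$ for some small $\delta_k>0$. Motohashi's spectral expansion does not, unconditionally, push the binary additive divisor error below $X^{2/3}$. So the conjecture, \emph{with the stated square-root error}, is open in every single case $k,l\geq 2$; your strategy does not ``run to completion'' anywhere. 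The remainder of your outline --- opening one divisor function, reducing to equidistribution in progressions, and identifying the balanced range as the obstruction tied to hyper-Kloosterman-type sums --- is a fair heuristic description of why the problem is hard, but it is not a proof and should not be presented as one.
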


This conjecture remains wide open for $k,l\geq3$. Nonetheless, some progress has been made for the case $l=2$, albeit with a weaker error term. For instance, \cite{DI} established the above formula when $k=l=2$ with an error term of the shape $O(X^{2/3+o(1)})$, while \cite{Dr, To} demonstrated for $k\geq3$ that there is some absolute constant $\delta_k>0$ such that
\[
\sum_{X<n\leq 2X}d_k(n) d_2(n+h) = P_{k,2,h} (\log X) X + O(X^{1-\delta_k}).
\]
We refer interested readers to \cite{Dr,MRTI} for  comprehensive overviews on this topic. 

A weaker version of this conjecture considers obtaining asymptotics for almost all $h$ in a given region of the shape $[h_0-H_2,h_0+H_2]$ but with weak error terms, the problem becoming closer to the original conjecture as $H_2$ gets smaller. In this direction, \cite{MRTI} established 
\begin{align*}
\sum_{X<n\leq 2X} d_k(n) d_l(n+h) = P_{k,l,h} (\log X) X +O(X\log^{-A}X)	
\end{align*}
for almost all $h\leq H_2$, provided that $X^{8/33-\eps}\leq H_2\leq X^{1-\eps}$. Prior to this,  \cite{BBMZ} had shown the above with the corresponding error term exhibiting a power-saving bound when $k=l=3$ and $H\geq X^{1/3+\eps}$. Furthermore, the first authors \cite{MRTII} achieved an analogous conclusion for the range $H_{2}\geq (\log X)^{10000k\log k}$ at the cost of enabling weaker error terms of the form $o(X(\log X)^{k+l-2})$. 


We shall be concerned herein with the problem of restricting the supporting interval of summation to a smaller scale, which aids in better understanding the local behavior of divisor correlations. We should first observe that a direct application of \cite[Theorem 1.1]{MRTII} (see \cite[Lemma 2.4]{Wang} for details) reveals that whenever $H_1\geq H_2\geq (\log X)^{10000k\log k}$ then the upper bound 
\[
\sum_{x<n\leq x+H_1} d_k(n) d_l(n+h) \ll_{k,l,h} H_1 (\log X)^{k+l-2}
\]
holds for almost all $x\in[X,2X]$ and almost all $h\leq H_2$. Moreover, experts may notice that the circle method combined with \cite[Theorem 1.1]{Wang} permits one to achieve an asymptotic formula for the range $H_1\geq H_2\geq \exp(C(\log X)^{1/2} (\log \log X)^{1/2})$. In this paper, we aim to address the question of establishing the expected asymptotic formula for smaller values of $H_1$ and $H_2$.

\begin{theorem}\label{thm1.1}
	Let $k,l\geq 2$ be natural numbers and $0<\eps_{1},\eps_{2}<1$ be small constants. Suppose that $\Phi(X) \to\infty$ arbitrarily slowly as $X\to \infty$. Then, whenever $(\log X)^{\Phi(X)}\leq H_1\leq X^{1-\eps_{1} }$ and $(\log X)^{1000k\log k}\leq H_2\leq H_1^{1-\varepsilon_{2}}$ one has
	\[
	\sum_{x\sim X}\sum_{0<|h|\leq H_2} \Bigabs{\sum_{x<n\leq x+H_1} d_k(n) d_l(n+h)-c_{k,l,h}(\log X)^{k+l-2} H_1}=o(XH_1H_2\log^{k+l-2}X),
	\]
	where $c_{k,l,h}>0$ is some explicit constant defined in (\ref{chk}).
\end{theorem}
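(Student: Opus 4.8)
The plan is to reduce the short-interval correlation on $(x,x+H_1]$ to a Fourier-analytic identity and then to exploit the long-interval result of the first authors (\cite{MRTII}, quoted above as the asymptotic with error $o(X(\log X)^{k+l-2})$ valid for $H_2\geq(\log X)^{10000k\log k}$) together with a majorant-type bound. First I would open up the constraint $x<n\leq x+H_1$ and $0<|h|\leq H_2$ by inserting smooth or Fej\'er-type weights $\psi_{H_1},\psi_{H_2}$ so that
\[
\sum_{x<n\leq x+H_1}\sum_{0<|h|\leq H_2} d_k(n)d_l(n+h)\approx \int_{\T}\Bigl(\sum_{n\sim X}d_k(n)e(n\alpha)\widehat{\psi}_{H_1}\Bigr)\Bigl(\overline{\sum_{m\sim X}d_l(m)e(m\alpha)}\Bigr)\widehat{\psi}_{H_2}(\alpha)\,\rd\alpha,
\]
then average the absolute value over $x\sim X$ via an $L^1\to L^2$ Cauchy--Schwarz step. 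The key point is that after squaring in $x$, the diagonal contributes the main term $c_{k,l,h}(\log X)^{k+l-2}H_1$ summed over $h$, and the off-diagonal is controlled by a second-moment (variance) estimate for $d_k$ in short intervals. This is where the hypothesis $H_1\geq(\log X)^{\Phi(X)}$ enters: one needs the Dirichlet-polynomial mean-value / large-values machinery (in the spirit of Matom\"aki--Radziwi\l\l{} and its divisor-function analogues, cf.\ \cite{Wang,MRTII}) to show that
\[
\int_X^{2X}\Bigabs{\sum_{x<n\leq x+H_1}d_k(n)-H_1\cdot(\text{main term})}^2\,\rd x = o\bigl(X H_1^2(\log X)^{2k-2}\bigr)
\]
once $H_1$ exceeds any fixed power of $\log X$; the slowly-growing $\Phi$ is exactly the threshold at which the minor-arc/large-values bounds beat the trivial estimate.

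The second main step is to handle the $h$-average. Here I would interchange the roles of the two scales: fixing the short window $(x,x+H_1]$, the inner sum over $h\leq H_2$ of $d_l(n+h)$ is, on average over $n$ in that window, close to $H_2$ times the expected density, by the already-established almost-all-$h$ long-interval asymptotic applied \emph{inside} the window. Concretely, one splits $(x,x+H_1]$ into $\sim H_1/H_2^{1+\delta}$ sub-blocks of length a small power of $H_2$ above $H_2$, applies the $H_2$-scale correlation result of \cite{MRTII} (or rather its short-interval strengthening that must be proved en route) on each block, and sums; the condition $H_2\leq H_1^{1-\eps_2}$ guarantees there are enough blocks for the error terms to be negligible after summation, while $H_2\geq(\log X)^{1000k\log k}$ is inherited from the input theorem. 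The constant $c_{k,l,h}$ is then assembled from the singular series / local densities exactly as in the long-interval case, which is why it is declared to be the explicit constant of \eqref{chk}.

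The main obstacle I anticipate is the \emph{simultaneous} smallness of both $H_1$ and $H_2$: the variance bound for $d_k$ in intervals of length $(\log X)^{\Phi(X)}$ is genuinely at the edge of what the Dirichlet-polynomial technology can deliver, and one cannot afford to lose any power of $\log X$ when the main term is only of size $H_1(\log X)^{k+l-2}$. Making the block-decomposition argument compatible with the Fourier-analytic variance estimate --- so that the error from approximating $\psi_{H_1}$, the error from the $H_2$-scale input on each block, and the off-diagonal from the $x$-average all decouple --- will require a careful choice of the intermediate scale and, likely, a two-parameter version of the Matom\"aki--Radziwi\l\l{} decomposition into "typical" and "atypical" $x$. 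A secondary technical point is controlling $d_k(n)$ near the endpoints of the window and the error in replacing $d_l(n+h)$ with its average, for which one uses the standard pointwise-on-average bounds $\sum_{n\sim X}d_k(n)^2\ll X(\log X)^{k^2-1}$ together with Shiu's theorem in short intervals; these are routine but must be tracked uniformly in $h$.
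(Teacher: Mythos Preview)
Your proposal contains a circular step. In the second stage you split $(x,x+H_1]$ into sub-blocks of length $H_2^{1+\delta}$ and propose to apply ``the $H_2$-scale correlation result of \cite{MRTII} (or rather its short-interval strengthening that must be proved en route)'' on each block. But that short-interval strengthening \emph{is} precisely Theorem~\ref{thm1.1}; the input from \cite{MRTII} is only an asymptotic over the full dyadic interval $(X,2X]$ and says nothing about windows of length $H_2^{1+\delta}\ll X$. There is a related structural problem with the first stage: you want to decouple into a variance estimate for $d_k$ in short intervals combined with an $h$-average of $d_l(n+h)$, but the target main term $c_{k,l,h}H_1(\log X)^{k+l-2}$ carries a singular series depending on $h$ through the \emph{joint} distribution of $d_k(n)$ and $d_l(n+h)$ in residue classes, and this cannot be reassembled from separate one-variable variance bounds.

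The paper's actual argument is a circle method applied directly to the short exponential sum $S_{f_k}(\alpha,x)=\sum_{x<n\leq x+H_1}f_k(n)e(n\alpha)$, after restricting $d_k$ to a set $S_k$ of anatomically typical integers. On the major arcs the new idea (Lemma~\ref{lem4.2}, Proposition~\ref{long-short}) is a long--short comparison for the \emph{twisted} sum itself: for each $\alpha=a/q+\beta\in\major$ one shows, in $L^2$ over $x\in[X,2X]$, that $S_{f_k}(\alpha,x)$ is approximated by $v_x(\beta)$ times a long average over an interval of length $Y=X/\log^A X$; the saving of an arbitrary power of $\log X$ in this comparison comes from the Vinogradov--Korobov zero-free region (Lemma~\ref{dirichlet-prime}), and it is here that $H_1\geq(\log X)^{\Phi(X)}$ is genuinely used. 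On the minor arcs (Proposition~\ref{prop5.2}) the difficulty you anticipated---simultaneous smallness of $H_1$ and $H_2$---appears as an intermediate range $Q^2/(qH_1)\leq|\gamma-a/q|\leq 2(\log X)/H_2$ not directly handled by \cite[Proposition~5.1(i)]{MRTI}; the paper covers it by a further subdivision into arcs of relative width $\eta\to 0$, on each of which the associated Dirichlet-polynomial integral is localised at heights $|t|$ where sharper bounds are available. The final assembly (Section~\ref{sec7}) combines the minor-arc saving with a large-values estimate (Proposition~\ref{prop6.1}) via H\"older rather than Cauchy--Schwarz, precisely to avoid the logarithmic losses you correctly flagged as fatal at this scale.
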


The parameter $H_2$ in Theorem \ref{thm1.1} is optimal up to a power of $\log X$, since when $H_2< (\log X)^{k\log k-k+1-\eps }$,  the main contribution to the average $\sum_{|h|\leq H_2}d_k(X+h)$ arises from integers with $(1+o(1))k \log\log X$ prime factors, as shown in \cite[Theorem 1.2]{Sun}. The restriction on $H_1$ stems from the limitation of the use of the circle method and Matom\"aki-Radziwi{\l}{\l} theorem.


\subsection{Organization of the paper}

We first reduce the question to typical numbers $n$ by introducing the function $f_k$ in (\ref{f_k}) in Section \ref{sec2}. Then, in Section \ref{sec3}, we establish a comparison result for short and long averages in terms of the divisor function which serves as a preparation for both major-arc and minor-arc estimates. Our result exhibits a stronger error term to that in the classical Matom\"aki-Radziwi{\l }{\l } theorem, as it benefits from the use of the Vinogradov-Korobov zero-free region in the context of divisor functions. We present this result in a broader form, anticipating its applicability in other contexts. Since our framework relies on the circle method, we establish the major-arc asymptotic in Section \ref{sec4}. As far as we are concerned, this is the first attempt to address an additive question with varying support ranges. In order to achieve so,  we adapt the long-short interval comparison argument for every major-arc frequency, which  transfers the analysis from short exponential sums to long ones, where explicit evaluation is possible. The restriction on the parameter $H_1$ in Theorem \ref{thm1.1} is imposed herein to ensure that the error terms when performing the comparison remain acceptable on average.

The minor-arc estimate in Section \ref{sec5} is inspired by that in \cite[Section 5]{MRTII}, it being pertinent remarking that a generalization of \cite[Lemma 5.1]{MRTII} is not straightforward in our setting. To illustrate the underlying ideas in the minor arc analysis, we anticipate that the goal is to prove
\[
\int_X^{2X} \int_{\minor\cap [\alpha-\frac{1}{2H_2}, \alpha+\frac{1}{2H_2}]} \Bigabs{\sum_{x<n \leq x+H_1} f_k(n) e(n\gamma)}^2\rd \gamma \rd x =o(XH_1 (\log X)^{2k-2}),
\] 
where $\minor\subset\T$ denotes the minor arcs,  $f_k:[X,2X]\to\R_{\geq0}$ is defined in (\ref{f_k}) and $H_2\leq H_1^{1-\eps  }\leq X^{1-\eps  }$. We note first that by extending the integral over $\gamma$ to the entire circle, an application of Plancherel's identity then yields the trivial bound $O(H_1\sum_{X/2<n<3 X} |f_k(n)|^2)$. This motivates us to transfer the analysis from the dual space to the physical space, ensuring that the summation intervals can be independent of the parameter $H_1$. A relevant technical result in this context is that in \cite[Proposition 5.1 (i)]{MRTI}, which asserts that
\begin{multline}\label{aa}
\int_{\beta-1/H}^{\beta+1/H} \Bigabs{ \sum_{n \sim X} f(n)  e(n\gamma)}^2\rd \gamma 
\ll \int_{|\beta|X(\log X)^{-A}}^{|\beta|X(\log X)^{B}} \Bigabs{\sum_{n\sim X}\frac{f(n)}{n^{1/2+it}}}^2\rd t \\
+
\bigbrac{(\log X)^{-10}+\frac{1}{|\beta|H}}^{2}	\int_X^{2X} \bigbrac{H^{-1}\sum_{x<n\leq x+H}|f(n)|}^2\rd x.
\end{multline}
This inequality can be used satisfactorily to handle the case when $\gamma\in\minor$ is highly irrational (i.e.$\norm{q \gamma}>H_2^{-1}$ or $|\beta|>H^{-1}$ in (\ref{aa})). However, this approach itself cannot address the estimate for frequencies lying on intermediate ranges (i.e. $H_1^{-1}<\norm{q\gamma}\leq H_2^{-1}$), which remain a challenge in our setting. In order to illustrate the discussion, we sketch our strategy to overcome this problem and assume for simplicity that $q=1$. We decompose these arcs into short subintervals $I_i=[\gamma_i-\eta_i\gamma_i,\gamma_i+\eta_i\gamma_i]$, where $\gamma_i$ is the central frequency of $I_i$ and $\eta_i$ is a parameter tending to zero. Applying (\ref{aa}) to each $I_i$ ensures that the second term remains negligible. Meanwhile, for arcs $I_i$ corresponding to medium frequencies, one can obtain a sharper upper bound for the first term in (\ref{aa}) which offsets the loss incurred by decomposing into shorter arcs. We believe that this approach extends the applicability of  Matom\"aki-Radziwi{\l }{\l } construction. In particular, our method provides an alternative proof of the minor-arc estimate in \cite{MRTII}.

In Section \ref{sec6}, we derive the specific mean value estimates, which are essentially a generalization of \cite[Proposition 3.6]{MRTII} and are referred to as the large values estimates here. Finally, we combine all the conclusions together in Section \ref{sec7} to deliver the main result of the paper.

\subsection*{Acknowledgement}

We thank Kaisa Matom\"aki for numerous valuable discussions and for inviting the first author to Turku for a week. JP was partially funded by the Curiosity Driven grant ``Value distribution of quantum modular forms'' of the Universita degli Studi di Genova. YS was supported by UTUGS funding, working in the Academy of Finland project no. 333707. MW was partially supported by the Academy of Finland grant no. 333707 and the Swiss National Science Foundation grant TMSGI2-2112.


\section{Preliminaries and technical reduction}\label{sec2}


Let $k\in\Z$ and $X$ be sufficiently large. Suppose that $\Psi(X)\to 0$ very slowly with $X\to\infty$  and $\Phi(X)\to \infty$ is the function defined in Theorem \ref{thm1.1}. Let $\eps'>0$ be a small fixed constant. We then denote $S_k$ as the set of integers $n\in[X,2X]$ that satisfy the two conditions below.
\begin{itemize}
	\item 	
	Let 
\begin{alignat}{3}\label{pq-1}
P_{1^{(1)}} &:= (\log X)^{\Psi(X)}, & \quad& 	Q_{1^{(1)}} := (\log X)^{80k\log k}, \nonumber\\
P_{1^{(2)}} &:= (\log X)^{10000k\log k}, & \quad& Q_{1^{(2)}} :=(\log X)^{0.1\Phi(X)}.
\end{alignat} 
 We also define 
\begin{alignat}{2}\label{pq-23}
P_2 &:= \exp\bigbrac{(\log\log X)^{2}}, & \quad& Q_3 := \exp \bigbrac{(\log\log X)^{5/2}},  \nonumber\\
P_3 &:= \exp\bigbrac{(\log X)^{3/4}}, & \quad& Q_3 := \exp \bigbrac{(\log X)^{5/6}}.
\end{alignat}

Write $n$ as 
 \[
 n=n_{1^{(1)}}n_{1^{(2)}}n_2n_3 m,
 \]
 where  the prime factors of each $n_{1^{(i)}}$ $ (1\leq i\leq 3)$  lie within the interval $[P_{1^{(i)}},Q_{1^{(i)}}]$; while the prime factors of every $ n_j $ $(j=2,3)$ lie within the interval $[P_j,Q_j]$;  and the remaining factor $m$ contains no prime factors within any of these intervals.  We then require that $n_{1^{(i)}} >1$ with $1\leq i\leq 3$ and $ n_j>1$  for $j=2,3$. 

 \item The total number of prime factors of $n$ does not exceed $(1+\eps' )k\log \log X$, and the total number of prime factors of $n$ in the range $[X^{\frac{1}{(\log\log X)^2}}, 2X]$ does not exceed $10k\log\log\log X$.
\end{itemize}

Now we set
\begin{align}\label{f_k}
f_k(n)=d_k(n)1_{S_k}(n).
\end{align}

\begin{remark}
The above construction of $S_k$ actually constitutes a modification of the set $S_{k,X}$ in the paper  \cite[pages 806-807]{MRTII}. In particular, the function $\Psi(X)$ plays the role of balancing the minor arc and large value estimates, as will become apparent in Section \ref{sec7}.

 The two conditions in the second assumption serve different purposes. The first one allows our local correlation parameter $H_2$ to be  of the shape  $O(\log^{Ck\log k}X)$ rather than $O(\log^{Ck^2}X)$, where $C>0$ is an absolute constant. The second condition is employed to construct a majorant function for $d_k$. This majorant function is essential to establish mean value estimates without logarithmic power loss, as discussed in \cite[Lemma 6.1]{MRTII}.

\end{remark}

The lemma below partially illustrates that in divisor correlation problems, no information is lost when restricting $d_k$ to the set $S_k$. We shall henceforth write $x\sim X$ to denote $X< x\leq 2X$.
\begin{lemma} \label{contribution-dk-fk}
Suppose that $k,l\geq 2$ and $\log^{1000k\log k}X\leq H_{2}\leq H_1^{1-\eps_{1}}$ with $H_{1}\leq X^{1-\eps_2 } $. Let $f_k:\Z\to\R_{\geq0}$ be the function defined in (\ref{f_k}). One then has
\[
\sum_{x\sim X} \sum_{0<|h|\leq H_2}\sum_{x<n\leq x+H_1} |d_k(n)-f_k(n)|d_l(n+h)=o(XH_1H_2 \log^{k+l-2}X).
\]	
\end{lemma}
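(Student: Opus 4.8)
The goal is to show that restricting $d_k$ to the set $S_k$ loses nothing on average over $x \sim X$, over shifts $0 < |h| \le H_2$, and inside the short window $(x, x+H_1]$. The plan is to bound the difference $d_k(n) - f_k(n) = d_k(n) 1_{n \notin S_k}(n)$ by a sum over the failure modes defining $S_k$, and to handle each mode by first summing out the shift variable $h$ and then the short-interval variable.

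The first step is to interchange the order of summation so that the sum over $n$ becomes the outer sum. After swapping, we are led to estimate, for each fixed $n \in [X/2, 3X]$, the quantity $d_k(n) 1_{n \notin S_k}(n)$ weighted by $\#\{x \sim X : x < n \le x + H_1\} \ll H_1$ and by $\sum_{0 < |h| \le H_2} d_l(n+h)$. The second step is to control the shifted divisor average: by a standard upper-bound sieve (or Shiu's theorem applied to $d_l$), for any fixed $n$ one has $\sum_{0 < |h| \le H_2} d_l(n+h) \ll H_2 (\log X)^{l-1}$, uniformly in $n$ with $|h| \le H_2 \le X$. This disposes of the $h$-sum and reduces matters to showing
\[
\sum_{X/2 < n < 3X} d_k(n) 1_{n \notin S_k}(n) = o\bigl(X (\log X)^{k-1}\bigr).
\]
The third step is to decompose $\{n \notin S_k\}$ into the finitely many defining conditions: (a) the factorization condition fails, i.e. some $n_{1^{(i)}} = 1$ or some $n_j = 1$; (b) $\Omega(n) > (1+\eps')k\log\log X$; (c) $n$ has more than $10k\log\log\log X$ prime factors exceeding $X^{1/(\log\log X)^2}$. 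Each contributes $o(X(\log X)^{k-1})$ by a now-standard argument: for (a), the probability (weighted by $d_k$) that $n$ has no prime factor in a dyadic-type interval $[P, Q]$ is controlled by $\prod_{P \le p \le Q}(1 - (k-1+o(1))/p) \asymp (\log P / \log Q)^{k-1}$ type bounds via a Rankin-type / fundamental lemma sieve argument, and each of the five intervals in \eqref{pq-1}–\eqref{pq-23} is wide enough (on the relevant $\log\log$ scale) that this ratio is $o(1)$; for (b), the $d_k$-weighted count of $n$ with abnormally many prime factors is $o(X(\log X)^{k-1})$ by a Halász–Tenenbaum type moment bound (bound $d_k(n) \le k^{\Omega(n)}$ and estimate $\sum_{n \le X} k^{\Omega(n)} \lambda^{\Omega(n)}$ for a suitable $\lambda > 1$, then use Markov); for (c), similarly, since the expected number of such large prime factors is $\asymp k \log\log\log X$, having more than $10k\log\log\log X$ of them is exponentially rare even after the $d_k$-weighting. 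These are precisely the estimates underlying the analogous set $S_{k,X}$ in \cite[pages 806--807]{MRTII}, so the fourth and final step is to cite or mildly adapt those computations, checking that the slightly different interval choices here (in particular the narrower $Q_{1^{(1)}}$ and the extra factor $n_{1^{(2)}}$) only change constants.

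The main obstacle — more bookkeeping than genuine difficulty — is the factorization condition (a) with the \emph{shortest} intervals, namely $[P_{1^{(1)}}, Q_{1^{(1)}}] = [(\log X)^{\Psi(X)}, (\log X)^{80k\log k}]$: since $\Psi(X) \to 0$ slowly, the ratio $\log P_{1^{(1)}} / \log Q_{1^{(1)}} = \Psi(X)/(80 k \log k)$ tends to $0$ only because of the $\Psi(X)$ factor, so one must make sure the sieve estimate for the $d_k$-weighted count of $n$ with no prime factor in this interval is genuinely $o(1) \cdot (\log X)^{k-1}$ and not merely $O((\log X)^{k-1})$; this is exactly the role of $\Psi(X)$ emphasized in the remark, and it forces one to keep the $o(1)$ explicit in the sieve bound rather than absorbing it. Once that is done the remaining intervals are comfortably wide and the $\Omega(n)$-type conditions follow from off-the-shelf moment estimates, so the lemma follows.
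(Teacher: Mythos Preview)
Your proposal has a genuine gap at the second step. You claim that
\[
\sum_{0<|h|\leq H_2} d_l(n+h) \ll H_2 (\log X)^{l-1}
\]
holds \emph{uniformly in $n$} via Shiu's theorem or an upper-bound sieve. But Shiu's bound requires the interval length to be at least a fixed positive power of the endpoint, i.e.\ $H_2 \geq n^{\delta}$ for some fixed $\delta>0$; here $H_2$ may be as small as $(\log X)^{1000k\log k}$, so this hypothesis fails badly. For individual $n$ the sum $\sum_{|h|\leq H_2} d_l(n+h)$ can genuinely be much larger than $H_2(\log X)^{l-1}$, so you cannot decouple the $h$-sum from the $n$-sum pointwise in the regime at hand.

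The paper's own proof avoids this by keeping the double sum over $n$ and $h$ intact after swapping out the $x$-sum, and then invoking \cite[Lemma~3.3]{MRTII} directly. The proof of that lemma uses Henriot's correlation bound to estimate $\sum_{n\sim X} d_k(n)\, 1_{n\notin S_k}\, d_l(n+h)$ \emph{uniformly in the shift $h$}, after which one sums trivially over $|h|\leq H_2$. Henriot's inequality is precisely designed for shifted correlations over long ranges of $n$, and is what allows the anatomy-of-integers decomposition you sketch in (a)--(c) to go through in the presence of the extra factor $d_l(n+h)$. Your steps (a)--(c) are morally the right breakdown, but they must be executed inside the correlation sum via Henriot's bound rather than after a pointwise separation that is unavailable here.
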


\begin{proof}
	By switching the order of summations one may find that
	\begin{multline*}
		\sum_{x\sim X} \sum_{0<|h|\leq H_2}\sum_{x<n\leq x+H_1} |d_k(n)-f_k(n)|d_l(n+h)\\
		=\sum_{X<n\leq 2X+H_1}\sum_{0<|h|\leq H_2} |d_k(n)-f_k(n)|d_l(n+h)\sum_{n-H_1\leq x<n}1,
	\end{multline*}
	the latter term being bounded by $o(XH_1H_2\log^{k+l-2}X)$ in light of \cite[Lemma 3.3]{MRTII}.
	\end{proof}
	
We then claim that in order to prove Theorem \ref{thm1.1}, it suffices to demonstrate the following proposition.
\begin{proposition}\label{reduction}
	Suppose that $k,l\geq 2$ and $f_k,f_l:[X,2X]\to\R_{\geq0}$ are functions defined as in (\ref{f_k}). Then when $ (\log X)^{\Phi(X)}\leq H_1\leq X^{1-\eps_1 }$ and $\log^{1000k\log k}X\leq H_2\leq H_1^{1-\eps_2}$ one has
	\[
	\sum_{x\sim X}\sum_{0<|h|\leq H_2} \bigabs{\sum_{x<n\leq x+H_1} f_k(n) f_l(n+h)-  c_{h,k,l} H_1  \log^{k+l-2}X }=o(XH_1H_2\log^{k+l-2}X).
	\]
\end{proposition}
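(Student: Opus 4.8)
The plan is to run the circle method on the short correlation $\sum_{x<n\le x+H_1}f_k(n)f_l(n+h)$, to pull the main term $c_{h,k,l}H_1\log^{k+l-2}X$ out of the major arcs via the long--short interval comparison of Section~\ref{sec3}, and to show that the minor arcs contribute negligibly once the shift $h$ has been averaged out. To set up, fix $x\sim X$, extend $f_l$ to a window of length $\asymp H_1$ containing every $n+h$ that occurs with $x<n\le x+H_1$ and $0<|h|\le H_2$ (possible because $H_2\le H_1$), and put $S_k(\gamma;x)=\sum_{x<n\le x+H_1}f_k(n)e(n\gamma)$ and $S_l(\gamma;x)=\sum_{m}f_l(m)e(m\gamma)$ over that window; orthogonality gives, for each $h$ with $0<|h|\le H_2$,
\[
\sum_{x<n\le x+H_1}f_k(n)f_l(n+h)=\int_{\T}S_k(\gamma;x)\,\overline{S_l(\gamma;x)}\,e(h\gamma)\rd\gamma .
\]
Two applications of Cauchy--Schwarz, over the $h$-sum and the $x$-sum, reduce Proposition~\ref{reduction} to the mean-square estimate
\[
\sum_{x\sim X}\sum_{0<|h|\le H_2}\Bigabs{\int_{\T}S_k(\gamma;x)\,\overline{S_l(\gamma;x)}\,e(h\gamma)\rd\gamma-c_{h,k,l}H_1\log^{k+l-2}X}^{2}=o\bigl(XH_1^{2}H_2\log^{2k+2l-4}X\bigr).
\]
One then fixes a major-arc modulus $Q$ (a suitable power of $\log X$) and an arc width, splits $\T=\major\sqcup\minor$ in the usual way around fractions $a/q$ with $q\le Q$, and writes the bracket above as $\bigl(\int_{\major}S_k\overline{S_l}\,e(h\gamma)\rd\gamma-c_{h,k,l}H_1\log^{k+l-2}X\bigr)+\int_{\minor}S_k\overline{S_l}\,e(h\gamma)\rd\gamma$.

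On the major arcs the plan is to apply the comparison of Section~\ref{sec3} frequency by frequency: for $\gamma=a/q+\beta$ with $q\le Q$ one replaces $S_k(\gamma;x)$ and $S_l(\gamma;x)$ by $(H_1/X)$ times the corresponding long twisted sums $\sum_{n\sim X}f_k(n)e(n(a/q+\beta))$ and $\sum_{m\sim X}f_l(m)e(m(a/q+\beta))$, the replacement error being acceptable \emph{on average over $x$}. The long twisted sums are then evaluated by splitting into residue classes modulo $q$ and running the Dirichlet-series/Perron analysis of the divisor function, using the Vinogradov--Korobov-type zero-free region to control the contour shift; integrating the outcome against $e(h\gamma)$ over $\major$ and summing the local densities over $q\le Q$ produces exactly the constant $c_{h,k,l}$ of~(\ref{chk}) and the factor $H_1\log^{k+l-2}X$, with a remainder that stays $o\bigl(XH_1^{2}H_2\log^{2k+2l-4}X\bigr)$ after the $x$- and $h$-averages. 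The hypothesis $H_1\ge(\log X)^{\Phi(X)}$ is imposed precisely here, to keep the comparison errors summable against the target.

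On the minor arcs, expand $\sum_{0<|h|\le H_2}\bigabs{\int_{\minor}S_k\overline{S_l}\,e(h\gamma)\rd\gamma}^{2}$ and sum the Dirichlet kernel $\sum_{|h|\le H_2}e(h(\gamma-\gamma'))$; this localises the double integral to windows of length $\asymp 1/H_2$. After Cauchy--Schwarz and a Parseval transfer in the $x$-variable — which turns $\int_X^{2X}\bigabs{S_k(\gamma;x)}^{2}\rd x$ into an integral of the \emph{long} sum $\sum_{n\sim X}f_k(n)e(n\psi)$ over a short arc about $\gamma$, plus harmless dyadic tails — the problem reduces to proving, uniformly in $\alpha$,
\[
\int_{X}^{2X}\int_{\minor\cap[\alpha-\frac1{2H_2},\,\alpha+\frac1{2H_2}]}\Bigabs{\sum_{x<n\le x+H_1}f_k(n)e(n\gamma)}^{2}\rd\gamma\rd x=o\bigl(XH_1\log^{2k-2}X\bigr)
\]
and its analogue with $l$ replacing $k$, where the $d_k$-majorant furnished by the second condition defining $S_k$, together with the large-values estimates of Section~\ref{sec6}, rules out a spurious logarithmic loss. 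Frequencies $\gamma=a/q+\beta$ with $\norm{q\gamma}>H_2^{-1}$ are handled directly through the inequality~(\ref{aa}) of \cite[Proposition 5.1(i)]{MRTI}, whose Dirichlet-series term is controlled by Section~\ref{sec6} and whose physical-space term is negligible because of the factor $1/(|\beta|H)$; the intermediate range $H_1^{-1}<\norm{q\gamma}\le H_2^{-1}$ is treated by first splitting the arc into short subintervals $I_i=[\gamma_i(1-\eta_i),\gamma_i(1+\eta_i)]$ with $\eta_i\to 0$, applying~(\ref{aa}) on each $I_i$ so the physical-space term remains negligible, and then extracting a sharper mean-square bound for the Dirichlet polynomial attached to medium frequencies that offsets the $\eta_i^{-1}$ loss from the subdivision. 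Combining the major-arc asymptotic with this minor-arc bound and performing the remaining averages gives the mean-square estimate above, hence Proposition~\ref{reduction}.

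The step I expect to be the main obstacle is the intermediate minor-arc range $H_1^{-1}<\norm{q\gamma}\le H_2^{-1}$: the inequality~(\ref{aa}) by itself is too lossy there, and one must carefully balance the improved Dirichlet-polynomial bound for medium frequencies against the loss from cutting the arc into $\eta_i^{-1}$ short pieces, while keeping the large-values input of Section~\ref{sec6} strong enough — this is the reason a direct generalisation of \cite[Lemma 5.1]{MRTII} fails. A secondary difficulty is the major-arc analysis: performing the long--short comparison at each frequency with a \emph{variable} support length $H_1$ (rather than $H_1=X$) is, as the authors remark, apparently new, and it is exactly this that forces the lower bound on $H_1$ in Theorem~\ref{thm1.1}.
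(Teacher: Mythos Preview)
Your overall architecture matches the paper's, and you correctly identify the intermediate minor-arc range $H_1^{-1}<\norm{q\gamma}\le H_2^{-1}$ as the crux. But your opening reduction has a genuine gap. Applying Cauchy--Schwarz over \emph{both} the $h$-sum and the $x$-sum commits you to the $L^2$-estimate
\[
\sum_{x\sim X}\sum_{0<|h|\le H_2}\Bigabs{\int_\minor S_{f_k}(\gamma,x)\,\overline{S_{f_l}(\gamma,x)}\,e(h\gamma)\rd\gamma}^2=o\bigl(XH_1^2H_2\log^{2k+2l-4}X\bigr),
\]
and after the localisation you describe this is bounded by $H_2\sum_i\sum_x A_i(x)B_i(x)$ with $A_i(x)=\int_{\minor\cap I_i}|S_{f_k}(\gamma,x)|^2\rd\gamma$ and $B_i(x)$ analogous. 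Because $A_i$ and $B_i$ are evaluated at the \emph{same} $x$, the second-moment-in-$x$ tools at hand (Propositions~\ref{prop5.2} and~\ref{prop6.1}) cannot decouple them: any further Cauchy--Schwarz produces fourth-moment objects, and any pointwise bound on $B_i(x)$ costs a power of $\log X$ far exceeding the saving $(\log X)^{-\Psi(X)/250}$ from Proposition~\ref{prop5.2}. The paper avoids this by applying Cauchy--Schwarz \emph{only in $h$}: it writes $\bigabs{\int_\minor\cdots}=c(x)\int_\minor\cdots$ with $|c(x)|=1$, so that upon expanding the square the two position integrals run over \emph{independent} variables $x,y$, and after Poisson one lands on $H_2\sum_i a(I_i)b(I_i)$ with $a(I_i)=X^{-1}\int_X^{2X}\int_{\minor\cap I_i}|S_{f_k}|^2$ --- a genuine $x$-average, exactly the object Propositions~\ref{prop5.2} and~\ref{prop6.1} control.

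A second, smaller gap is the endgame. The uniform-in-$\alpha$ display you isolate is precisely Proposition~\ref{prop5.2}, but that alone saves only $(\log X)^{-\Psi(X)/250}$ with $\Psi(X)\to 0$, while Proposition~\ref{prop6.1} carries a loss $(\log X)^{\psi_1(X)}$; neither wins by itself, and your phrase ``rules out a spurious logarithmic loss'' does not explain the mechanism. The paper combines them through the H\"older inequality $\sum_i a(I_i)b(I_i)\le(\sup_i a(I_i))^{1/4}\|a\|_{\ell^{7/4}}^{3/4}\|b\|_{\ell^{7/4}}$: Proposition~\ref{prop6.1} shows the $j$-th largest $a(I_j)$ is $O(j^{-3/4}H_1\log^{2k-2+\psi_1(X)}X)$, whence $\|a\|_{\ell^{7/4}}\ll H_1\log^{2k-2+\psi_1(X)}X$, and one then chooses $\Psi(X)\ge 2001\,\psi_1(X)$ so that the net exponent $2\psi_1(X)-\Psi(X)/1000$ is negative. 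This balancing, not the sup estimate in isolation, is what closes the minor-arc argument.
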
	

Indeed, upon noting that $f_k\leq d_k$ pointwise, it transpires that the employment of Lemma \ref{contribution-dk-fk} in conjunction with Proposition \ref{reduction}, a symmetry argument and the triangle inequality entails
\begin{multline*}
	\sum_{x\sim X}\sum_{0<|h|\leq H_2} \bigabs{\sum_{x<n\leq x+H_1} d_k(n) d_l(n+h)- c_{h,k,l} H_1 \log^{k+l-2}X}\\
	\ll \sum_{x\sim X}\sum_{0<|h|\leq H_2} \bigabs{\sum_{x<n\leq x+H_1} f_k(n) f_l(n+h)- c_{h,k,l} H_1 \log^{k+l-2}X}\\
	 + \sum_{x\sim X} \sum_{0<|h|\leq H_2}\sum_{x<n\leq x+H_1} |d_k(n)-f_k(n)|d_l(n+h)=o(XH_1H_2\log^{k+l-2} X).
\end{multline*}


\section{Distribution of divisor function twisted by Dirichelet characters in almost all very short intervals}\label{sec3}

We start this section with a lemma that is crucial for building our  Matom{\"a}ki-Radziwi{\l}{\l} type conclusion for divisor functions. 

\begin{lemma}[Dirichlet polynomial of primes] \label{dirichlet-prime}
Suppose that $2/3<\theta<1$ and $\exp((\log X)^\theta)\leq P\leq Q\leq X$. Suppose that $t\in\R $ with $|t|\leq X^{1+\eps }$ and $\chi$ is a Dirichlet character with conductor $q\leq \log^{100k\log k}X$. Then there is some positive constant $c>0$ such that
\[
\Bigabs{\sum_{P\leq p\leq Q} \frac{\chi(p)}{p^{1+it}}} \ll\frac{\log X}{1+|t|} + \exp\Bigbrac{-\frac{c(\log X)^{\theta-2/3}}{(\log\log X)^{1/3}}}.
\]	
Moreover, when $\chi$ is not the principal character one has
\[
\Bigabs{\sum_{P\leq p\leq Q} \frac{\chi(p)}{p^{1+it}}} \ll \exp\Bigbrac{-\frac{c(\log X)^{\theta-2/3}}{(\log\log X)^{1/3}}}.
\]
\end{lemma}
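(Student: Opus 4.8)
The plan is to reduce the partial sum over primes to a weighted integral of $-\frac{L'}{L}(s,\chi)$ (or $\frac{\zeta'}{\zeta}$ when $\chi=\chi_0$) along a suitable vertical contour, and then to exploit the Vinogradov--Korobov zero-free region together with standard bounds on $\frac{L'}{L}$ to the left of it. First I would pass from $\sum_{P\le p\le Q}\chi(p)p^{-1-it}$ to $\sum_{P\le p\le Q}\frac{\chi(p)\log p}{p^{1+it}}\cdot\frac{1}{\log p}$, i.e.\ use partial summation to replace the von Mangoldt--type logarithmic weight by $1$; since $\log p \asymp \log X$ uniformly for $p\in[P,Q]$ when $P\ge \exp((\log X)^\theta)$, this only costs a factor $\asymp (\log X)^{-1}$ and moves the main term $\frac{\log X}{1+|t|}$ to a main term $\frac{1}{1+|t|}$ for the $\Lambda$-weighted sum (after also discarding the prime-power terms $p^m$, $m\ge 2$, whose contribution is $O(P^{-1/2})$, negligible). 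So it suffices to bound $\sum_{P\le n\le Q}\frac{\Lambda(n)\chi(n)}{n^{1+it}}$ by $O\!\big(\tfrac{1}{1+|t|} + \exp(-c(\log X)^{\theta-2/3}(\log\log X)^{-1/3})\big)$, with the first term absent when $\chi\ne\chi_0$.

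Next I would invoke Perron's formula (a truncated, smoothed version) to write $\sum_{P\le n\le Q}\frac{\Lambda(n)\chi(n)}{n^{s}}$ with $s=1+it$ as a contour integral of $-\frac{L'}{L}(s+w,\chi)\,\frac{Q^{w}-P^{w}}{w}$ over a vertical line $\real w = \sigma_0$ with $\sigma_0>0$ small, then shift the contour to the left to $\real w = -\delta(X)$ where $1+\sigma_0 - \delta(X)$ lies inside the Vinogradov--Korobov zero-free region for $L(s,\chi)$ — uniformly over $|t|\le X^{1+\eps}$ and over conductors $q\le (\log X)^{100k\log k}$, which is legitimate because $\log q = O(\log\log X \cdot \log\log\log X) = o((\log X)^{\theta})$, so the zero-free region of width $\asymp (\log(qT))^{-2/3}(\log\log(qT))^{-1/3}$ comfortably clears the relevant abscissa. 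When $\chi=\chi_0$ there is a pole of $-\frac{\zeta'}{\zeta}$ at $w=1-s=-it$, whose residue contributes $\frac{Q^{-it}-P^{-it}}{-it}=O(\frac{1}{1+|t|})$ — this is exactly the source of the main term. When $\chi\ne\chi_0$ there is no pole, so no main term appears. On the shifted contour, using the classical bound $\frac{L'}{L}(s,\chi)\ll (\log(q(|t|+2)))^{2/3}(\log\log(q(|t|+2)))^{1/3}$ inside the zero-free region, together with $|Q^{w}-P^{w}|\le 2P^{-\delta(X)} = 2\exp(-\delta(X)(\log X)^{\theta})$ on $\real w=-\delta(X)$ (here I must be slightly careful and truncate the contour at height $|\Im w|\le T_0$ for a suitable $T_0$, balancing the horizontal pieces; taking $T_0$ a small power of $X$ works, the tails being controlled by $\frac{Q^w - P^w}{w}$ decaying in $|\Im w|$), one arrives at a bound of the shape $\exp(-c'(\log X)^{\theta}(\log(qX))^{-2/3}(\log\log(qX))^{-1/3})$. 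Since $\log(qX)\asymp \log X$ and $\log\log(qX)\asymp\log\log X$ in our range of $q$, this is $\le \exp(-c(\log X)^{\theta-2/3}(\log\log X)^{-1/3})$, as required.

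The main obstacle, and the step deserving the most care, is making the zero-free region input genuinely \emph{uniform} in both the conductor $q$ (up to $(\log X)^{100k\log k}$) and the height $t$ (up to $X^{1+\eps}$), and in particular handling the possible Siegel zero / exceptional real character. For real $\chi$ one could in principle have a Landau--Siegel zero $\beta_1$ very close to $1$, which would wreck the pointwise $\frac{L'}{L}$ bound near $w=1-s$; however, for $|t|$ bounded away from $0$ this is not an issue, and for small $|t|$ one instead notes that a Siegel zero of modulus $q\le (\log X)^{100k\log k}$ satisfies $\beta_1 \le 1 - c/(q^{\eps}) \le 1 - c(\log X)^{-100\eps k\log k}$ by Siegel's theorem (ineffectively) or $1 - c/(\sqrt q \log^2 q)$ by Landau, which for our tiny $q$ still sits to the right of... no — this is precisely where one must be careful: a more robust route is to absorb the potential exceptional zero's contribution into the $\frac{1}{1+|t|}$ term when $\chi$ is principal-like in behaviour, and for genuinely non-principal $\chi$ to use that the zero repulsion / the fact that we only need a \emph{quantitative Vinogradov--Korobov} statement (not an optimal one) lets us take a classical zero-free region $\real s > 1 - c/\log(q(|t|+2))$ as a fallback on the part of the contour near the real axis, which already gives a saving of $\exp(-c(\log X)^{\theta}/\log\log X)$ — weaker than claimed but still far stronger than the $(\log\log X)^{-1/3}$-type loss we can tolerate, since $(\log X)^{\theta}/\log\log X$ dominates $(\log X)^{\theta-2/3}(\log\log X)^{-1/3}$. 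Thus a careful bookkeeping, splitting the contour into a ``low $|t|$'' piece handled by a classical zero-free region and Siegel's bound, and a ``high'' piece handled by Vinogradov--Korobov, delivers the stated estimate; writing this splitting cleanly and verifying the claimed exponent is exactly the part I expect to take the most work.
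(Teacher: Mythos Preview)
Your overall strategy---Perron's formula plus a contour shift into the Vinogradov--Korobov zero-free region---is exactly what the paper does. The paper, however, works directly with $\log L(s,\chi)$ (whose Dirichlet coefficients are $\chi(p^m)/m$, so the $m=1$ part is precisely $\sum_p \chi(p)p^{-s}$) rather than passing through $-\tfrac{L'}{L}$ and partial summation. This avoids your detour through the $\Lambda$-weighted sum and the slip that accompanies it: you assert $\log p \asymp \log X$ for $p\in[P,Q]$, but in fact only $(\log X)^\theta \le \log p \le \log X$, so your ``costs a factor $\asymp(\log X)^{-1}$'' is off by a power of $\log X$. Fortunately this doesn't break anything, since the partial-summation bound one actually gets, $\frac{1}{\log P}\cdot\frac{\log X}{1+|t|}=\frac{(\log X)^{1-\theta}}{1+|t|}$, is still dominated by the claimed $\frac{\log X}{1+|t|}$.

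Your Siegel-zero discussion is more tortured than it needs to be. The paper simply observes that for $|\Im s + t|\le 10$ one can invoke Siegel's theorem (as in \cite[Corollaries 11.8, 11.15]{MV}) to get $L(s+1+it,\chi)\ne 0$ for $\Re s \ge -c_2/q^{\eps}$; since $q\le (\log X)^{100k\log k}$, choosing $\eps$ small enough makes $c_2/q^{\eps}$ larger than the Vinogradov--Korobov width $c_1/((\log X)^{2/3}(\log\log X)^{1/3})$, so the combined zero-free region is simply $\Re s \ge -\sigma_0$ with $\sigma_0 \asymp (\log X)^{-2/3}(\log\log X)^{-1/3}$ throughout the whole strip $|\Im s + t|\le X^{1+\eps}$. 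No splitting into ``low'' and ``high'' contours, and no fallback to a classical zero-free region, is needed. The paper also uses the uniform bound $|\log L(s+1+it,\chi)|\ll \log^2 X$ on that region, which is cleaner than tracking $\tfrac{L'}{L}$.
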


\begin{proof}
We first introduce the so called Vinogradov-Korobov zero-free region. Suppose first that $|\Im s+t|\geq 10$. By \cite[Theorem 1.1]{Kh} and the assumption $q\leq (\log X)^{100k\log k}$, one may deduce that  $ L(s+1+it,\chi)$ does not vanish in the region
\[
\Re s\geq -\frac{c_1}{(\log X)^{2/3} (\log\log X)^{1/3}} \quad 10\leq |\Im s+t|\leq X^{1+\eps},
\]
for some constant $c_1>0$. On the other hand, if $|\Im s+t|\leq 10$ then the application of \cite[Corollaries 11.8 and 11.15]{MV}  shows that for any $\eps>0$ there is another constant $c_2=c_2(\eps)>0$ such that $ L(s+1+it,\chi)\neq 0$ whenever $\Re s\geq-c_2/q^\eps.$
 Therefore, combining the preceding cases, we can always conclude that $L(s+1+it,\chi) \neq 0$ in the region
\begin{align*}
\mathcal R_1:   |\Im s+t|\leq X^{1+\eps }, \quad \Re s\geq-\sigma_0=-\frac{c'}{q^\eps+(\log X)^{2/3} (\log\log X)^{1/3}},
\end{align*}
where $c'>0$ is an absolute constant. If $\chi \neq \chi_0$, this implies that  $\log L(s+1+it,\chi)$ is well-defined in the region $\mathcal R_1$. Furthermore, by \cite[Theorem 11.4 (11.6)]{MV} the upper bound 
\begin{align}\label{log-bound}
|\log L(s+1+it,\chi)| \ll \log^2 X
\end{align}
holds for any character $\chi\neq\chi_0$. We aim to make further progress in the lemma by expressing the above Dirichlet polynomial as a contour integral involving the function $\log L(s+1+it,\chi)$. 
 
We first deal with the case when $\chi\neq\chi_0$. Without loss of generality, we may assume that the fractional part of both $P$ and $Q$ are $1/2$, and take a small constant $0<\kappa<1/2$. It then follows from the application of the truncated Perron's formula (e.g. \cite[Theorems 5.2 and 5.3]{MV}) that
\begin{multline*}
	\sum_{P\leq p\leq Q} \frac{\chi(p)}{p^{1+it}} = \frac{1}{2\pi i} \int_{\kappa-iX}^{\kappa+iX} \log L(s+1+it,\chi) \frac{Q^s-P^s}{s}\rd s+ O\bigbrac{\frac{Q^\kappa}{X} \sum_{p\geq 1} p^{-1-\kappa}}\\
	+O\Bigbrac{ \sum_{n\sim Q}n^{-1}\min\set{1,\frac{Q}{X|Q-n|}}} +O\Bigbrac{ \sum_{n\sim P} n^{-1}\min\set{1,\frac{P}{X|P-n|}}}.
\end{multline*}
The first error term in the above equation is $O(Q^\kappa/X)$. In order to deal with the second one, one may recall that $1/2\leq \lvert Q-n\rvert$ by the above assumption to deduce that such an error term is $O(X^{-1}\log Q )$, an analogous calculation entailing that the third one is $O(X^{-1}\log P )$. Therefore, we have
\[
\sum_{P\leq p\leq Q} \frac{\chi(p)}{p^{1+it}} = \frac{1}{2\pi i} \int_{\kappa-iX}^{\kappa+iX} \log L(s+1+it,\chi) \frac{Q^s-P^s}{s}\rd s +O(X^{-1/2}).
\]
In light of (\ref{log-bound}) and the assumption $\kappa<1/2$, we may shift the contour to the edge of the region $\mathcal R_1$ to obtain 
\begin{align*}
\sum_{P\leq p\leq Q}& \frac{\chi(p)}{p^{1+it}} = \frac{1}{2\pi i} \int_{-X}^{X} \log L(1-\sigma_0+i(t+u),\chi) \frac{Q^{-\sigma_0+iu} -P^{-\sigma_0+iu}}{-\sigma_0+iu}\rd u\\
& \pm \frac{1}{2\pi i} \int_{-\sigma_0}^\kappa \log L(1+\sigma \pm i(t+X),\chi) \frac{Q^{\sigma\pm iX} -P^{\sigma\pm iX}}{\sigma\pm iX}\rd\sigma +O(X^{-1/2}).	
\end{align*}
We then take absolute values inside the integral to conclude that
\[
\Bigabs{ \sum_{P\leq p\leq Q}\frac{\chi(p)}{p^{1+it}}} \ll P^{-\sigma_0}\log^3X+ X^{-1/2} \ll \exp\Bigbrac{-\frac{c(\log X)^{\theta-2/3}}{(\log\log X)^{1/3}}}
\]
recalling that $X\geq Q\geq P\geq \exp((\log X)^\theta)$ and $2/3<\theta\leq 1$, and taking $c=c'/2$.

Now let's briefly prove the estimate when $\chi=\chi_0$. We first note that the prime number theorem entails that the Dirichlet polynomial at hand is $O(\log\log Q-\log\log P)$ whenever $|t|\leq 20$. Thus, in what follows, we may always assume that $20< |t|\leq X^{1+\eps}$. 

Let $\delta= (\log X)^{-1}$  and  $T=|t|-10$. Then, an application of the truncated version of Perron's formula delivers
\begin{multline*}
	\sum_{P\leq p\leq Q} \frac{\chi_0(p)}{p^{1+it}} = \frac{1}{2\pi i} \int_{\delta-iT}^{\delta+iT} \log L(s+1+it,\chi_0) \frac{Q^s-P^s}{s}\rd s+ O\bigbrac{T^{-1} \sum_{p\geq 1} p^{-1-\delta }}\\
	+O\Bigbrac{T^{-1}\sum_{1/2< \lvert n-Q\rvert\leq Q}(n-Q)^{-1}+T^{-1}\sum_{1/2< \lvert n-P\rvert\leq P}(n-P)^{-1}}.
\end{multline*}
The same argument as above permits one to deduce that the preceding error term is $O(\log X/(|t|+1))$. Note that in the above integral $1\leq|\Im s+t|\leq 2 X^{1+\eps }$. Since $ L(s+1+it,\chi_0)$ has no poles under such assumptions, then $\log L(s+1+it,\chi_0)$ is well-defined in the region
\[
\mathcal R_2:   1\leq|\Im s+t|\leq 2X^{1+\eps }, \quad \Re s\geq-\sigma_0,
\]
and moreover $|\log L(s+1+it,\chi_0)|\ll \log^2X$ whenever $s\in\mathcal R_2$. Therefore, as shown in the case of non-principal characters, by shifting the contour to the edge of $\mathcal R_2$ we thereby conclude that
\[
\Bigabs{ \sum_{P\leq p\leq Q}\frac{\chi_0(p)}{p^{1+it}}} \ll \frac{\log X}{1+|t|} + \exp\bigbrac{-\frac{c(\log X)^{\theta-2/3}}{(\log\log X)^{1/3}}}.
\]
The lemma follows by combining the conclusions from the cases $|t|\leq 20$ and $|t|> 20$.
\end{proof}

The next result shows that when the range of summation is appropriately large, the average of the divisor function twisted by non-principal characters of small conductor has strongly logarithmic decay. This proposition will play a crucial role both when deriving our Matom\"aki-Radziwi{\l }{\l }-type conclusion for divisor functions and performing the major arc analysis.

\begin{proposition}\label{non-principal}
Suppose that $1\leq q, q_0\leq \log^{100k\log k}X$ are integers, $A>0$ is a fixed constant and $T\in\mathbb{R}$ with $\lvert T\rvert\leq X$. Let $f_k:[X,2X]\to\R_{\geq 0}$ be the function defined in (\ref{f_k}). Then, whenever $X\log^{-A}X\leq Y\leq X$ one has
\begin{enumerate}
	\item $\sum_{\chi\neq\chi_0 \mmod q} \bigabs{\sum_{X/q_0<n\leq X/q_0+Y}f_k(q_0 n)\chi(n)n^{iT} }\ll Y\log^{-K}X;$
	\item $\sum_{\chi\neq\chi_0 \mmod q} \bigabs{\sum_{X/q_0<n\leq X/q_0+Y}d_k(q_0 n)\chi(n)n^{iT} }\ll Y\log^{-K}X,$
\end{enumerate}
where $K>1$ is an arbitrarily large constant.	
\end{proposition}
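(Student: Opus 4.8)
The plan is to prove both estimates at once by a single Perron‑plus‑zero‑free‑region argument, obtaining (2) as the case $b=d_k$ and treating $b=f_k$ by carrying the conditions defining $S_k$ through the relevant Dirichlet series. As there are at most $q\le(\log X)^{100k\log k}$ non‑principal characters mod $q$, it suffices to prove for each such $\chi$ and for $b\in\{d_k,f_k\}$ the pointwise bound
\begin{equation*}
\Bigabs{\sum_{X/q_0<n\le X/q_0+Y}b(q_0n)\chi(n)n^{iT}}\ll \exp\bigbrac{-c(\log X)^{1/6}}
\end{equation*}
for some $c=c(k)>0$; this is far stronger than the claimed $Y(\log X)^{-K}$ since $Y\ge X(\log X)^{-A}$, and summing over characters costs only the harmless factor $q$.

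First I would apply the truncated Perron formula with cut‑off height $T_1:=X^{2}$, which relates the sum to a contour integral, along $\real s=1+(\log X)^{-1}$, of the Dirichlet series $D(s):=\sum_{n}b(q_0n)\chi(n)n^{iT-s}$; the truncation error is $\ll X^{-1/2}$ by routine estimates (using $b(q_0n)\ll_k n^{\eps}$ and adjusting the endpoints of the summation interval to have fractional part $1/2$). The crucial input is a continuation of $D$ to the left of $\real s=1$ together with a polynomial bound there, and one uses that, up to the translation by $iT$, $D$ equals $L(\cdot,\chi)^{k}$ times finitely many correction factors. For $b=d_k$ the only correction is the finite Euler‑type factor $E_{q_0}$ accounting for $\sum_n d_k(q_0n)\chi(n)n^{-s}$ versus $L(s,\chi)^{k}$ at the primes dividing $q_0$; since $\chi$ is non‑principal of conductor $q\le(\log X)^{100k\log k}$, the Vinogradov--Korobov zero‑free region --- obtained exactly as in the proof of Lemma \ref{dirichlet-prime} from \cite[Theorem 1.1]{Kh} and \cite[Corollaries 11.8 and 11.15]{MV} --- gives $L(w,\chi)\neq0$ and the standard consequence $\lvert\log L(w,\chi)\rvert\ll\log\log X$ throughout the region $\real w\ge 1-\sigma_0$, $\lvert\Im w\rvert\le 2X^{2}$, with $\sigma_0:=c'(\log X)^{-5/6}$. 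Hence $\lvert L(w,\chi)^{k}\rvert\ll(\log X)^{O(k)}$ and $\lvert E_{q_0}(w,\chi)\rvert\ll(\log X)^{o(1)}$ on this region, the latter because $\omega(q_0)\ll\log\log X$ and $q_0^{\sigma_0}=1+o(1)$.

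For $b=f_k$ the series $D$ carries several additional factors encoding $1_{S_k}$, and the work is to check that none of them grows faster than $(\log X)^{O(k)}$ on $\real w=1-\sigma_0$. Writing each factorization condition ``$n$ has a prime factor in $[P_j,Q_j]$'' as $1-1_{(n,\,\prod_{P_j\le p\le Q_j}p)=1}$ introduces finite Euler products $\prod_{P_j\le p\le Q_j,\,p\nmid q}(1-\chi(p)p^{-w})^{k}$, each of which is $\ll(\log X)^{o(1)}$ on the shifted line as soon as $\sigma_0\log Q_j\to0$; the binding range is $[P_3,Q_3]$ with $Q_3=\exp((\log X)^{5/6})$, which is exactly why $\sigma_0$ is taken of size $(\log X)^{-5/6}$ --- still well inside the zero‑free region above. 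The remaining two conditions, $\Omega(n)\le(1+\eps')k\log\log X$ and ``$n$ has at most $10k\log\log\log X$ prime factors exceeding $X^{1/(\log\log X)^{2}}$'', I would detect by a further Cauchy integral in an auxiliary variable $z$ over a circle $\lvert z\rvert=\rho$ with a suitable constant $\rho<1$, exploiting that $\sum_{n}d_k(q_0n)z^{\Omega(n)}\chi(n)n^{-w}$ has an Euler product whose logarithm equals $kz\sum_{p}\chi(p)p^{-w}+O(k)=O(k\log\log X)$ on the region above by the same zero‑free‑region estimate; this keeps that factor $\ll(\log X)^{O(k)}$ as well. Altogether $\lvert D(1-\sigma_0+it)\rvert\ll(\log X)^{O(k)}$ uniformly for $\lvert t\rvert\le 2X^{2}$.

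Finally, since $\chi$ is non‑principal, $D$ has no pole, so I would move the Perron contour from $\real s=1+(\log X)^{-1}$ to $\real s=1-\sigma_0$: the horizontal pieces at height $\asymp T_1$ contribute $\ll(\log X)^{O(k)}X/T_1\ll X^{-1/2}$, and the shifted vertical segment contributes $\ll X^{-\sigma_0}(\log X)^{O(k)}\ll\exp(-c(\log X)^{1/6})$, which is the required bound. I expect the main obstacle to be the case $b=f_k$ treated in this way: one cannot simply bound $\lvert f_k\rvert\le d_k$ inside the absolute value, nor estimate $\sum_n\lvert d_k-f_k\rvert(q_0n)$ and multiply by the number of characters, because the factorization conditions tied to the slowly varying $\Psi$ and $\Phi$ only delete a set of $d_k$‑mass of relative size $o(1)$ --- not a negative power of $\log X$ --- which is far too large to survive a factor $q\le(\log X)^{100k\log k}$. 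So every defining condition of $S_k$ must be tracked inside the Dirichlet series, the delicate points being the $[P_3,Q_3]$‑Euler product (which forces $\sigma_0$ down to size $(\log X)^{-5/6}$) and the $\Omega$‑detecting integral (for which one needs the zero‑free‑region bound $\lvert\log L(w,\chi)\rvert\ll\log\log X$ rather than the cruder $\ll(\log X)^{2}$ used in Lemma \ref{dirichlet-prime}).
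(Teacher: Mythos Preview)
The paper gives no self-contained argument here --- it simply cites \cite[Lemma~4.4]{MRTII} --- so a line-by-line comparison is not possible. Your outline is correct in spirit and is essentially the route that lemma takes: Perron, contour shift into the Vinogradov--Korobov region, inclusion--exclusion over the prime-interval conditions of $S_k$, and a contour-integral detection of the two $\Omega$-constraints; the choice $\sigma_0\asymp(\log X)^{-5/6}$, forced by the $[P_3,Q_3]$ Euler factor, is the right one.

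The one step that needs more care than you indicate is the bound $\lvert\log L(w,\chi)\rvert\ll\log\log X$ on $\Re w=1-\sigma_0$. You need this so that the $z$-twisted product $\prod_p(1-z\chi(p)p^{-w})^{-k}=\exp\bigl(kz\log L(w,\chi)+O(k)\bigr)$ is $(\log X)^{O(k)}$; if $\lvert\log L\rvert$ were only bounded by the generic zero-free-region estimate $(\log X)^{2/3+o(1)}$, this factor would be $\exp\bigl((\log X)^{2/3+o(1)}\bigr)$ and would swamp the saving $X^{-\sigma_0}=\exp(-(\log X)^{1/6})$, so the whole $\Omega$-detection step would collapse. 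The bound \emph{is} true, but it is not the ``standard consequence'' you call it (the paper's own Lemma~\ref{dirichlet-prime} is content with $\ll(\log X)^2$). The reason it holds is that $\sigma_0=(\log X)^{-5/6}$ is much smaller than the zero-free width $\asymp(\log X)^{-2/3}$: write
\[
\log L(1-\sigma_0+it,\chi)=\log L(1+it,\chi)-\int_0^{\sigma_0}\frac{L'}{L}(1-u+it,\chi)\,\rd u;
\]
the first term is $\ll\log\log X$ by classical $1$-line estimates, and $\lvert L'/L\rvert\ll(\log X)^{2/3+o(1)}$ on the segment by a Borel--Carath\'eodory argument in a disc of radius $\asymp(\log X)^{-2/3}$ centred at $1+it$, so the integral contributes $\ll\sigma_0(\log X)^{2/3+o(1)}=o(1)$. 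With this step justified your argument goes through and the saving $\exp(-c(\log X)^{1/6})$ is genuine. Two smaller inaccuracies worth noting: the $[P_3,Q_3]$ Euler factor is of size $(\log X)^{O(k)}$, not $(\log X)^{o(1)}$, since $\sum_{P_3\le p\le Q_3}p^{-1}\asymp\log\log X$; and the Cauchy integral for $1_{\Omega\le M}$ on $\lvert z\rvert=\rho<1$ costs a factor $\rho^{-M}=(\log X)^{O(k)}$. Both are harmless for the final bound.
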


\begin{proof}

The assertions follow immediately from the proof of \cite[Lemma 4.4]{MRTII}. 
\end{proof}

We have so far completed all the essential preliminary work, and we are now prepared to present the main conclusion of this section. Since this will be employed in different contexts and with various parameters, we aim to provide a fairly general result. For such purposes it seems worth introducing beforehand some notation. Suppose that $P_{1},Q_{1}$ are parameters satisfying $2<P_1<Q_1<P_2$, and recall $P_2,Q_2,P_3,Q_3$, these being defined in (\ref{pq-23}).

\begin{proposition}[Long-short averages]\label{long-short}

Let $1\leq q, q_0\leq \log^{100k\log k}X$ be integers and $T_{0}\in\R$ with $|T_{0}|\leq X$. Suppose that $g:[X,2X]\to\R_{\geq 0}$ is an arithmetic function that satisfies the following conditions.
\begin{itemize}
	\item $g(n)\leq d_k(n) 1_{\Omega(n)\leq k(1+\eps')\log\log X}$ pointwise;
	\item if $n\in \supp(g)$, then $(n,\prod_{P_j\leq p\leq Q_j} p)>1$ for all $1\leq j\leq 3$;
     \item For each $1\leq i\leq 3$ then whenever $p\in[P_i,Q_i]$  and $p\not|m$ with $mp\in \supp(g)$ one has $g(pm)=ka(m)$ for some arithmetic function $a:\mathbb{N}\rightarrow \mathbb{C}$ satisfying $|a(m)|\leq d_k(m)$.
\end{itemize}
   Let $X/\log ^{B}X\leq Y\leq X/\log^A X$ for sufficiently large constants $0<A<B$ and $\log^{1000k\log k}X\leq H\leq X^{1-\eps }$. Whenever $\log^{80k\log k} X\leq Q_1\leq \min\set{H^{2/3}, P_2^{1/100}}$ and $P_1\leq \min\set{\log^{10000k\log k}X, Q_1^{1/20}}$ one then has
\begin{enumerate}
\item 	(estimate for the principal character) 
\begin{multline*}
 \int_{X}^{2X} \Bigabs{\sum_{\frac{x}{q_0}<n\leq \frac{x}{q_0}+H} g(q_0 n)\chi_0(n)n^{iT_0}-\int_{\frac{x}{q_0}}^{\frac{x}{q_0}+H}u^{iT_0}\,\rd u\cdot Y^{-1}\sum_{\frac{x}{q_0}<n\leq \frac{x}{q_0}+Y}g(q_0n)\chi_0(n)}^2\,\rd x \\
		\ll P_1^{-1/8} XH^2\log^{2k-2}X,
	\end{multline*}
	\item (estimate for  non-principal characters)
	\[
	 \sum_{\chi\neq\chi_0\mmod q} \int_{X}^{2X} \Bigabs{ \sum_{x/q_0<n\leq x/q_0+H} g(q_0 n)\chi(n) n^{iT_0}}^2\,\rd x\ll P_1^{-1/8} XH^2\log^{2k-2}X.
	\]
\end{enumerate}

\end{proposition}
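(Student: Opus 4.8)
The plan is to run the Matom\"aki--Radziwi{\l}{\l} method, generalising \cite[\S3]{MRTII} to the present $g$, $H$, $q_0$ and $T_0$; the two genuinely new inputs will be Lemma \ref{dirichlet-prime} and Proposition \ref{non-principal}. First I would apply a Parseval/Plancherel-type inequality for short sums (in the spirit of \cite[Proposition 5.1]{MRTI}, of which (\ref{aa}) is a special case, and of \cite[\S3]{MRTII}) to reduce both (1) and (2) to a bound of the form
\[
\int_{\mathcal I}\Bigabs{\sum_{n}g(q_0n)\chi(n)n^{iT_0}n^{-1-i\sigma}}^2\rd\sigma\ll P_1^{-1/8}\log^{2k-2}X,
\]
where $n$ runs over $(X/q_0,2X/q_0]$ and $\mathcal I$ is the frequency band dual to the interval lengths: for (1) (with $\chi=\chi_0$) it is $\mathcal I=\{\sigma:\ cX/(q_0Y)\le|\sigma|\le CX/(q_0H)\}$, the lower cut-off being exactly what subtracting the length-$Y$ average in (1) produces and, since $A$ may be taken as large as we like in terms of $k$, containing the neighbourhood of $\sigma=0$ on which the prime polynomial below fails to be small; for (2) one has $\mathcal I=\{|\sigma|\le CX/(q_0H)\}$, and the low-frequency band $\{|\sigma|\le\log^{O_k(1)}X\}$ is peeled off and handled directly by Proposition \ref{non-principal} (applied with $q_0$ and the twist $n^{iT_0}$) together with a Gallagher-type smoothing. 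The error terms discarded in the Parseval step I would bound by $\ll P_1^{-1/8}XH^2\log^{2k-2}X$ using $g\le d_k1_{\Omega(n)\le(1+\eps')k\log\log X}$ and the standard short-interval mean-value estimates for $d_k$ from \cite[\S3]{MRTII}.

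Next I would exploit the multiplicative structure of $g$. Setting $D_\chi(s)=\sum_n g(q_0n)\chi(n)n^{iT_0}n^{-s}$, the fact that every $n\in\supp(g)$ has a prime factor in $[P_1,Q_1]$ together with the relation $g(pm)=ka(m)$ yields, via a Ramaré-type identity,
\[
D_\chi(1+i\sigma)=\frac1{L_1}R_\chi(1+i\sigma)\,G_\chi(1+i\sigma)+\mathcal E_\chi(1+i\sigma),\qquad L_1:=\sum_{P_1\le p\le Q_1}\frac1p,
\]
with $R_\chi(s)=\sum_{P_1\le p\le Q_1}\chi(p)p^{iT_0}p^{-s}$ a short prime polynomial, $G_\chi(s)=k\sum_m a(q_0m)\chi(m)m^{iT_0}m^{-s}$ the complementary one ($|a|\le d_k$), and $\mathcal E_\chi$ collecting the $n$ divisible by $p^2$ for some $p\in[P_1,Q_1]$, the $n$ with an atypically large number of prime factors in $[P_1,Q_1]$, and the discrepancy between the Ramaré weight and $1/L_1$; here $P_1\le Q_1^{1/20}$ ensures $L_1\gg1$ and $Q_1\le\min\{H^{2/3},P_2^{1/100}\}$ ensures that $G_\chi$ stays long relative to $H$. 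Using the uniform divisor-correlation upper bound $\sum_{m\le M}d_k(q_0m)d_k(q_0(m+d))\ll_k M\log^{2k-2}M$ together with the lower cut-off in $\mathcal I$ (which restricts the off-diagonal to a range of shifts on which the relevant Fourier weight is essentially constant, so that no spurious logarithm appears) I expect $\int_{\mathcal I}|G_\chi(1+i\sigma)|^2\rd\sigma\ll\log^{2k-2}X$ and $\int_{\mathcal I}|\mathcal E_\chi(1+i\sigma)|^2\rd\sigma\ll P_1^{-1/4}\log^{O_k(1)}X$. When $H$ is large enough that $Q_3\le H^{2/3}$ I would instead factor out a prime polynomial over $[P_3,Q_3]=[\exp((\log X)^{3/4}),\exp((\log X)^{5/6})]$, which lies in the range where Lemma \ref{dirichlet-prime} applies.

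It then remains to bound $\frac1{L_1^2}\int_{\mathcal I}|R_\chi G_\chi|^2$, which I would do by splitting $\mathcal I$ into the set where $|R_\chi(1+i\sigma)|\le V$ and its complement, for a threshold $V$ equal to a small power of $P_1$ times $L_1$. On the first set $\int|R_\chi G_\chi|^2\le V^2\int_{\mathcal I}|G_\chi|^2\ll V^2\log^{2k-2}X$, which is of the desired size. On the complement I would combine the large-values estimates of Section \ref{sec6} (the generalisation of \cite[Proposition 3.6]{MRTII}), a crude pointwise bound for $\|R_\chi G_\chi\|_\infty$, and the observation that $R_\chi(1+i\sigma)$ — being essentially $\int_{\log P_1}^{\log Q_1}e^{-i\sigma v}v^{-1}\rd v$ plus a prime-number-theorem error — can only be large for $\sigma$ in a short range about the origin, a range which for (1) lies outside $\mathcal I$ by the lower cut-off (with $A$ large) and for (2) was removed in the first step; for the portion of $\mathcal I$ with $|\sigma|$ beyond the reach of the prime number theorem I would bring in Lemma \ref{dirichlet-prime}, whose second (non-principal) bound makes the $[P_3,Q_3]$-polynomial smaller than any fixed power of $\log X$ uniformly and thereby settles (2) at once, while for $\chi_0$ its first bound combined with a dyadic splitting in $|\sigma|$ delivers the required saving once $|\sigma|$ exceeds $\exp((\log X)^{3/4})$.

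I expect the main difficulty to be twofold. First, producing the correct power $\log^{2k-2}X$: the crude second moment $\sum_{n\le X}d_k(n)^21_{\Omega(n)\le(1+\eps')k\log\log X}$ is already of size $X\log^{k\log k+k-1+o(1)}X$, much larger than $X\log^{2k-2}X$ for $k\ge2$, so the mean-value bounds for $G_\chi$ cannot be obtained from a bare mean-value theorem but instead require the full strength of the large-values estimates of Section \ref{sec6}, which retain the $d_k$-convolution structure — this, in fact, is the reason those estimates are developed — and the lower cut-off in $\mathcal I$ (i.e.\ $Y\le X/\log^A X$) is needed to absorb the extra logarithm that would otherwise survive in the off-diagonal. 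Secondly, since the Vinogradov--Korobov input of Lemma \ref{dirichlet-prime} only pertains to primes of size at least $\exp((\log X)^{2/3+o(1)})$, whereas for small $H$ the constraint $Q_1\le H^{2/3}$ confines the factorisation to small primes, one must combine and dovetail the prime-number-theorem régime ($|\sigma|\lesssim\exp((\log X)^{3/4})$), the Vinogradov--Korobov régime, and direct estimates on the remaining frequencies, uniformly in $H$ down to $\log^{1000k\log k}X$ and without losing the $P_1^{-1/8}$ saving; this bookkeeping is the technical heart of the argument, and it is here that the hypotheses $Q_1\le\min\{H^{2/3},P_2^{1/100}\}$, $P_1\le\min\{\log^{10000k\log k}X,Q_1^{1/20}\}$ and $X/\log^B X\le Y\le X/\log^A X$ are all used.
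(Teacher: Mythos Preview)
Your overall shape is right --- Perron reduction to a Dirichlet-polynomial mean value, a Ramar\'e-type factoring out of a short prime polynomial, and a small/large dichotomy on that polynomial --- and you correctly identify Proposition~\ref{non-principal} as what handles the low frequencies for $\chi\neq\chi_0$. The gap is in your treatment of the set where the prime polynomial is large.

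You claim that $R_\chi(1+i\sigma)=\sum_{P_1\le p\le Q_1}\chi(p)p^{iT_0-1-i\sigma}$ is ``essentially $\int_{\log P_1}^{\log Q_1}e^{-i\sigma v}v^{-1}\rd v$ plus a prime-number-theorem error'' and hence can only be large near the origin. But $P_1,Q_1$ are of size $\log^{O_k(1)}X$, so after partial summation the PNT remainder is of order $|\sigma|\exp(-c\sqrt{\log\log X})$, which swamps everything once $|\sigma|$ is anywhere near $X/H$. There is no useful pointwise bound on $R_\chi$ over most of $\mathcal I$, the level set $\{|R_\chi|>V\}$ occupies a fixed fraction of $\mathcal I$, and no combination of a crude $\|R_\chi G_\chi\|_\infty$ bound with a large-values estimate for such a short polynomial can close the argument. (Separately, your appeals to Section~\ref{sec6} are misplaced: Proposition~\ref{prop6.1} is about the physical-space sums $S_{f_k}(\alpha,x)$ and plays no role in the proof of Proposition~\ref{long-short}; the exponent $2k-2$ comes from the log-free mean-value Lemma~\ref{mean-value}.) The paper instead runs a \emph{three-level cascade} using all of $[P_1,Q_1],[P_2,Q_2],[P_3,Q_3]$ at once, for every $H$: $\mathcal T_1=\{Q_{v,1}\text{ small}\}$ is handled by Lemma~\ref{mean-value}; on $\mathcal T_2=\mathcal T_1^c\cap\{Q_{v,2}\text{ small}\}$ one uses the $E_2$ estimate of \cite{Sun} (whence the hypothesis $Q_1\le P_2^{1/100}$); and on $\mathcal T_3=(\mathcal T_1\cup\mathcal T_2)^c$ the $[P_2,Q_2]$-largeness feeds a Hal\'asz--Montgomery bound giving $|\mathcal T_3|\ll X^{1/2-\delta}$, and only on this already sparse set is Lemma~\ref{dirichlet-prime} applied --- to the $[P_3,Q_3]$-polynomial, whose primes are finally large enough for Vinogradov--Korobov. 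Your proposal never mentions $[P_2,Q_2]$; without it the bridge between the tiny primes in $[P_1,Q_1]$ (where PNT gives nothing for large $|\sigma|$) and the large primes in $[P_3,Q_3]$ (where the complementary factor becomes too short when $H$ is small) cannot be built.
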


\begin{remark}

Proposition \ref{long-short} provides a stronger version of the short average results for divisor functions, meaning that the error term is significantly sharper than that of \cite[Theorem 1.3]{Sun}. Generally, it is difficult to achieve such a strong error term for general divisor-bounded multiplicative functions. We note, however, that \cite[Proposition 3.2]{Wang} had already established a strong exceptional set result for these functions.

\end{remark}

To prove the above proposition, we need to record the following mean value theorem for Dirichlet polynomials.

\begin{lemma}[Log-free mean value theorem] \label{mean-value}
Fix $\delta\in(0,1)$. Let $a(n)$ be a sequence with $|a(n)|\leq d_k(n)$. Let $X\geq Y\geq X^\delta\geq 2$. Then one has
\begin{multline*}
\sum_{\chi\mmod q} \int_{-T}^T\Bigabs{\sum_{X<n \leq X+Y}\frac{a(n)\chi(n)}{n^{1+it}}}^2\,\rd t \\
\ll \frac{T \varphi(q)}{X^2} \sum_{X<n\leq X+Y}|a(n)|^2 +\frac{Y}{X} \Bigbrac{\frac{\varphi(q)}{q}}^2\prod_{p\leq 2X\atop p\nmid q} \Bigbrac{1+\frac{2k-2}{p}}.	
\end{multline*}

\end{lemma}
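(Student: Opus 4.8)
The plan is to follow the standard route for log-free mean value estimates: smooth the sharp cut-off $\int_{-T}^{T}$ by a Fej\'er majorant, expand the square, apply orthogonality of the characters modulo $q$, and estimate the diagonal and off-diagonal parts separately; the diagonal will reproduce the first term on the right, while the off-diagonal, fed into a short-interval divisor correlation bound, will reproduce the second. Concretely, I would fix an even $\psi\colon\R\to\R_{\geq0}$ whose Fourier transform $\hat\psi$ is nonnegative, supported in $[-4T,4T]$, and satisfies $\hat\psi\geq\mathbf 1_{[-T,T]}$ and $\psi(u)\ll\min\bigl(T,(Tu^{2})^{-1}\bigr)$; such a $\psi$ comes from rescaling the Fej\'er kernel $(\sin u/u)^{2}$. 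With $\mathcal D_{\chi}(t)=\sum_{X<n\leq X+Y}a(n)\chi(n)n^{-1-it}$, positivity of $\hat\psi$ gives $\int_{-T}^{T}|\mathcal D_{\chi}(t)|^{2}\,\rd t\leq\int_{\R}\hat\psi(t)|\mathcal D_{\chi}(t)|^{2}\,\rd t$; expanding, summing over $\chi\bmod q$, and using $\sum_{\chi\bmod q}\chi(m)\overline{\chi(n)}=\varphi(q)\mathbf 1_{m\equiv n\,(q)}\mathbf 1_{(mn,q)=1}$ together with $\int_{\R}\hat\psi(t)(m/n)^{-it}\,\rd t\ll\psi(\log(m/n))$ yields
\[
\sum_{\chi\bmod q}\int_{-T}^{T}|\mathcal D_{\chi}(t)|^{2}\,\rd t\ll\varphi(q)\twosum{X<m,n\leq X+Y}{m\equiv n\,(q),\ (mn,q)=1}\frac{|a(m)||a(n)|}{mn}\,\psi\bigl(\log(m/n)\bigr).
\]
The contribution of $m=n$ is $\ll\varphi(q)\,\psi(0)\sum_{X<n\leq X+Y}|a(n)|^{2}n^{-2}\ll T\varphi(q)X^{-2}\sum_{X<n\leq X+Y}|a(n)|^{2}$, the first term.

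For $m\neq n$ I would put $r=m-n$, so $q\mid r$ and $1\leq|r|<Y$; since $\log(m/n)\asymp|r|/X$ we have $\psi(\log(m/n))\ll\min(T,X^{2}/(Tr^{2}))$, and after $|a(m)||a(n)|\leq d_{k}(m)d_{k}(n)$ and extending the inner sum to $(X,X+Y]$ the off-diagonal is
\[
\ll\frac{\varphi(q)}{X^{2}}\twosum{1\leq|r|<Y}{q\mid r}\min\Bigl(T,\frac{X^{2}}{Tr^{2}}\Bigr)\twosum{X<n\leq X+Y}{(n,q)=1}d_{k}(n)d_{k}(n+r).
\]
Here I would invoke a short-interval divisor correlation estimate of Shiu--Nair--Tenenbaum--Henriot type: since $Y\geq X^{\delta}$, for $1\leq|r|\leq X$ one has $\sum_{X<n\leq X+Y,\,(n,q)=1}d_{k}(n)d_{k}(n+r)\ll_{k,\delta}Y\bigl(\prod_{p\leq 2X,\,p\nmid q}(1+\tfrac{2k-2}{p})\bigr)\mathfrak h_{q}(r)$, where $\mathfrak h_{q}\geq0$ carries the residual singular-series dependence on $r$ and on $p\mid q$ and satisfies $\sum_{r\leq R,\,q\mid r}\mathfrak h_{q}(r)\ll(\varphi(q)/q)R$ and $\sum_{r>R,\,q\mid r}\mathfrak h_{q}(r)r^{-2}\ll(\varphi(q)/q)R^{-1}$ (the coprimality $(n,q)=1$ producing the $\varphi(q)/q$ normalisation and the exclusion of the $p\mid q$ factors from the product). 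Splitting the $r$-sum at $|r|\asymp X/T$: for $|r|\leq X/T$ the weight is $\asymp T$, contributing $\ll T\cdot(\varphi(q)/q)(X/(qT))=(\varphi(q)/q)(X/q)$; for $|r|>X/T$ the weight is $X^{2}/(Tr^{2})$ and the convergent tail gives $\ll(X^{2}/(Tq))\cdot(\varphi(q)/q)(qT/X)=(\varphi(q)/q)(X/q)$ as well. Hence the off-diagonal is $\ll\frac{\varphi(q)}{X^{2}}\cdot Y\prod_{p\leq 2X,\,p\nmid q}(1+\tfrac{2k-2}{p})\cdot(\varphi(q)/q)(X/q)=\frac{Y}{X}(\varphi(q)/q)^{2}\prod_{p\leq 2X,\,p\nmid q}(1+\tfrac{2k-2}{p})$, the second term.

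The main obstacle is this off-diagonal estimate, in two respects. First, one genuinely needs the smoothed integral: replacing $\psi(\log(m/n))$ by the bare bound $\min(T,1/|\log(m/n)|)$ leaves a kernel $\asymp X/|r|$, and then $\sum_{q\mid r}(X/|r|)\mathfrak h_{q}(r)$ converges only up to a factor $\log X$, producing a spurious $(\log X)^{2k-1}$; it is precisely the $r^{-2}$ decay furnished by the Fej\'er majorant that keeps the $r$-sum bounded and the estimate log-free. Second, one must apply the Shiu--Nair--Tenenbaum correlation bound with enough uniformity in $r$ and with the correct local factors at the primes $p\mid q$ (including the effect of the condition $(n,q)=1$) so that the $q$-dependence assembles into exactly $(\varphi(q)/q)^{2}\prod_{p\nmid q}(1+\tfrac{2k-2}{p})$ rather than a slightly worse expression; verifying the two displayed properties of $\mathfrak h_{q}$ is a routine but careful computation with the singular series for $d_{k}\ast d_{k}$. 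Finally, the borderline regime where $Y$ is close to $X^{\delta}$ or $|r|$ close to $Y$ is handled uniformly by extending the inner sum to the full interval $(X,X+Y]$, of length $\geq X^{\delta}$, before applying the correlation bound.
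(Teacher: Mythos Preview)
Your approach is correct and is precisely the argument the paper has in mind: the paper's proof consists of the single sentence ``follows from the argument of \cite[Lemma 5.2]{MRTII} with $f(n)=d_k(n)$'', and that argument is exactly the Fej\'er-smoothing, square-out, character-orthogonality, and Henriot/Nair--Tenenbaum off-diagonal treatment you describe. One small bookkeeping point: your stated properties of $\mathfrak h_q$ and the subsequent arithmetic are not quite mutually consistent (the bound you actually use in the splitting is $\sum_{r\leq R,\,q\mid r}\mathfrak h_q(r)\ll(\varphi(q)/q)\,R/q$ rather than $(\varphi(q)/q)R$, and similarly for the tail), but this is a typo-level issue that resolves itself once the singular-series computation is written out carefully.
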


\begin{proof}
The desired estimate holds by following the argument of \cite[Lemma 5.2]{MRTII} with $f(n)=d_k(n)$.	
\end{proof}

\noindent\emph{Proof of Proposition \ref{long-short}.}

Let $t_\chi$ denote (one of) the numbers $t\in[-X,X]$ that minimize the distance
\[
\sum_{p\leq 2X}\frac{g(p)- \Re g(p)\chi(p) p^{iT_0-it}}{p}.
\]
We plan to prove the proposition by demonstrating the claims 
 \begin{multline*}
	\sum_{\chi\mmod q} \int_X^{2X} \Bigabs{\frac{1}{H}\sum_{\frac{x}{q_0}<n\leq \frac{x}{q_0}+H} g(q_0n)\chi(n) n^{iT_0}-H^{-1}\int_{\frac{x}{q_0}}^{\frac{x}{q_0}+H} u^{it_\chi}\,\rd u \\\cdot Y^{-1}\sum_{\frac{x}{q_0}<n\leq \frac{x}{q_0}+Y} g(q_0n)\chi(n)n^{i(T_0-t_\chi)}}^2\,\rd x
		\ll P_{1}^{-1/8} X\log^{2k-2}X,
 \end{multline*}
 and 
\begin{equation*}
\sum_{\chi\neq\chi_0\mmod q}\bigabs{\sum_{x/q_0<n\leq x/q_0+Y}g(q_0 n)\chi(n) n^{i(T_0-t_\chi)}}\ll P_1^{-1/8}Y\log^{k-1}X.
\end{equation*}
 The first conclusion then follows immediately from the first claim, noting that $t_{\chi_0}=T_0$. The second conclusion is derived by combining the two claims and applying the triangle inequality. Furthermore, it is worth noting that the second claim is essentially a direct consequence of Proposition \ref{non-principal}, given that $|T_0-t_\chi|\leq 2X$ and $P_1\leq \log^{10000k\log k}X$. Therefore, it suffices to prove only the first one.

Similarly to the approach employed in the proof of \cite[Proposition 3.3]{Wang}, we set a parameter $W=\log^{10000k\log k}X$ and define the Dirichlet polynomial $G(s)$ as  $$G(s)=\sum_{n\sim X/q_0}\frac{g(q_0n)\chi(n)n^{iT_0}}{n^{s}}.$$ Applying Perron's formula to each short average we get 

\begin{align*}
\sum_{\frac{x}{q_0}<n\leq \frac{x}{q_0}+H} g(q_0n)\chi(n) n^{iT_0}=& \frac{1}{2\pi i} \int_{|t-t_\chi|\leq W}G(1+it)\frac{(\frac{x}{q_0}+H)^{1+it}-\bigbrac{\frac{x}{q_0}}^{1+it}}{1+it}\,\rd t 
\\
&+ \frac{1}{2\pi i} \int_{|t-t_\chi|> W}G(1+it)\frac{(\frac{x}{q_0}+H)^{1+it}-\bigbrac{\frac{x}{q_0}}^{1+it}}{1+it}\,\rd t  .
\end{align*}

By making the change of variables $t\to t+t_\chi$, we express the inner quotient as an integral of a phase function, namely
\begin{multline*}
\frac{H^{-1}}{2\pi i}\int_{|t-t_\chi|\leq W}G(1+it)\frac{(x/q_0+H)^{1+it}-(x/q_0)^{1+it}}{1+it}\,\rd t\\
=	\frac{H^{-1}}{2\pi i}\int_{|t|\leq W} G(1+i(t+t_\chi))\int_{x/q_0}^{x/q_0+H} u^{i(t+t_\chi)}\,\rd u\,\rd t.
\end{multline*}
Since Taylor expansion yields $u^{it}= (x/q_0)^{it}+ O(H|t|q_0/x)$ whenever $x/q_0<u\leq x/q_0+H$,  the above integral equals
\begin{align}\label{main-short-integral}
	\frac{H^{-1}}{2\pi i}\int_{x/q_0}^{x/q_0+H} u^{it_\chi}\,\rd u \cdot&  \int_{|t|\leq W} G(1+i(t+t_\chi)) \Big(\frac{x}{q_0}\Big)^{it}\,\rd t \nonumber
\\
&+O\Bigbrac{\frac{Hq_0}{x}\int_{|t|\leq W}|t||G(1+i(t+t_\chi))|\,\rd t}.
\end{align}
We shall first examine the above error term. Indeed, the application of Cauchy-Schwarz inequality permits one to deduce that
\begin{align*}
\sum_{\chi\mmod q}\int_X^{2X}& \Bigabs{ \frac{q_0 H}{x}\int_{|t|\leq W}|t||G(1+i(t+t_\chi))|\,\rd t}^2\rd x\\
&\ll \frac{q_0^2 W^3H^2}{X}\sum_{\chi\mmod q}\int_{|t|\leq W}|G(1+i(t+t_\chi))|^2\,\rd t.
\end{align*}
We then apply Lemma \ref{mean-value} for the choice $a_n=g(q_0 n)\chi(n)n^{i(T_0-t_\chi)}$, upon noting that $|a_n|\leq d_k(n)d_k(q_0)$, to bound the above expression by
\[
\ll \frac{q_0^2 W^3H^2}{X} \bigbrac{\frac{W}{X}d_k(q_0)^{2}\log^{k^2}X +\log^{2k-2}X}\ll X^{1-\eps},
\] 
where we employed the fact that $q_0,W, P_1\ll \log^{10000k\log k}X$ and $H\leq X^{1-\eps }$.

For each long average involving the character $\chi\mmod q$, another application of Perron's formula entails
\begin{multline*}
	H^{-1}\int_{x/q_0}^{x/q_0+H} u^{it_\chi}\,\rd u\cdot Y^{-1} \sum_{x/q_0<n\leq x/q_0+Y} g(q_0 n)\chi(n) n^{i(T_0-t_\chi)}\\
	=  \frac{H^{-1}}{2\pi i} \int_{\frac{x}{q_0}}^{\frac{x}{q_0}+H} u^{it_\chi}\,\rd u\cdot \Bigbrac{\int_{|t|\leq W}+\int_{|t|>W}} G(1+i(t+t_\chi))\frac{(\frac{x}{q_0}+Y)^{1+it}-(\frac{x}{q_0})^{1+it}}{Y(1+it)}\,\rd t.
\end{multline*} 
It is then worth observing that since the Taylor expansion leads to $$\frac{(\frac{x}{q_0}+Y)^{1+it}-(\frac{x}{q_0})^{1+it}}{Y(1+it)}=\Big(\frac{x}{q_0}\Big)^{it} (1+O(Y|t|q_0/x)),$$ one can deduce that
\begin{multline*}
H^{-1}\int_{\frac{x}{q_0}}^{\frac{x}{q_0}+H} u^{it_\chi}\,\rd u\cdot	\frac{1}{2\pi i} \int_{|t|\leq W} G(1+i(t+t_\chi))\frac{(\frac{x}{q_0}+Y)^{1+it}-(\frac{x}{q_0})^{1+it}}{Y(1+it)}\,\rd t\\
=H^{-1}\int_{\frac{x}{q_0}}^{\frac{x}{q_0}+H} u^{it_\chi}\,\rd u\cdot	\frac{1}{2\pi i} \int_{|t|\leq W} G(1+i(t+t_\chi)) \Big(\frac{x}{q_0}\Big)^{it}\,\rd t\\+O\Bigbrac{\frac{Y}{x/q_0}\int_{|t|\leq W}|t||G(1+i(t+t_\chi))|\,\rd t}.
\end{multline*}

In view of the previous calculation, it is apparent that the above error term is negligible. Moreover, comparing the above estimate with (\ref{main-short-integral}) one deduces that the main terms are identical. This implies that the contribution of the region $|t|\leq W$ is acceptable when evaluating the difference between the long and short averages.

In view of the above remarks, it therefore suffices to prove that whenever $H\leq h\leq Y$, the following inequality 
\begin{align*}
\sum_{\chi\mmod q}\int_X^{2X} \Bigabs{h^{-1}\int_{|t-t_\chi|>W}&G(1+it)\frac{(\frac{x}{q_0}+h)^{1+it}-(\frac{x}{q_0})^{1+it}}{1+it}\,\rd t}^2\rd x
\\
& \ll P_{1}^{-1/8} X \log^{2k-2}X.
\end{align*}
holds. We first make the change of varibles $x/q_0\to x$ and employ \cite[Lemma 8.1]{MRII} to conclude that the above left-hand side is bounded above by a constant times
\[
 \max_{T\geq X/(h q_0)} \frac{X^2/(hq_0)}{T} \sum_{\chi\mmod q} \int_{W<|t-t_\chi|\leq T} |G(1+it)|^2\rd t.
\]

We next introduce for further convenience and each $1\leq j\leq 3$ the Dirichlet polynomials 
\[
P_j(t,\chi) := \sum_{P_j\leq p\leq Q_j}\frac{k\chi(p)}{p^{1+i(t-T_{0})}}.
\] 
By taking $D=P_1^{1/6}$ we decompose each $P_j(t,\chi)$  as 
\[
P_j(t,\chi) =\sum_{\floor{D \log P_j}\leq v\leq D\log Q_j} Q_{v,j} (t,\chi) \, \text{ and }\, Q_{v,j} (t,\chi)=\twosum{e^{v/D}<p\leq e^{(v+1)/D}}{P_j\leq p\leq Q_j} \frac{k\chi(p)}{p^{1+i(t-T_{0})}}.
\]
Similarly to the argument in \cite[Section 5]{MRTII}, we now set $\delta_1=1/4-1/100$ and $\delta_2=1/4-1/50$.  Additionally, for each Dirichlet character $\chi$ modulus $q$  we write 
$$\mathcal T(\chi)=\set{t\in \R :W<|t-t_\chi|\leq T}$$
 and divide it into three disjoint ranges, namely
\begin{align}\label{pq-11}
& \mathcal T_1(\chi) = \set{ t\in\mathcal T(\chi): |Q_{v,1}(t,\chi)|\leq e^{-\delta_1 v/D} \text{ for all } \floor{D \log P_1}\leq v\leq D\log Q_1},\\
& \mathcal T_2(\chi) = \set{ t\in\mathcal T(\chi)\backslash \mathcal T_1(\chi): |Q_{v,2}(t,\chi)|\leq e^{-\delta_2 v/D} \text{ for all } \floor{D \log P_2}\leq v\leq D\log Q_2}\nonumber,\\
& \mathcal T_3(\chi) = \mathcal T(\chi)\backslash (\mathcal T_1(\chi)\cup \mathcal T_2(\chi))\nonumber.
\end{align}
We note that the function $g$ is supported on a set of natural numbers having prime factors in each of the intervals $[P_j,Q_j]$ for all $1\leq j\leq 3$. Consequently, the sum $B_5(s)$ in the proof of \cite[Lemma 4.6]{Sun} would not appear when applied in the present context.  Therefore, for each $j=1,2,3$, we use the argument of \cite[Lemma 4.6]{Sun} with Lemma \ref{mean-value} replacing \cite[Lemma 4.3]{Sun} to conclude that
\begin{multline} \label{prop3.3-reduction}
	\sum_{\chi\mmod q} \int_{\mathcal T_j(\chi)} |G(1+it)|^2\rd t \ll  P_1^{-1/6}\log^{2k-2}X\\
	+D\log Q_j  \sum_{\floor{D \log P_j}\leq v\leq D\log Q_j} \sum_{\chi\mmod q} \int_{\mathcal T_j(\chi)}|Q_{v,j(t,\chi)}R_{v,j}(t,\chi)|^2\rd t,
\end{multline}
with 
\[
R_{v,j}(t,\chi) =\sum_{\substack{(X/q_0) e^{-v/D}<m\leq (2X/q_0) e^{-v/D}\\ q_{0}m\in\mathcal{S}_{j}^{*}}} \frac{a_{k}(q_0 m)\chi(m)}{m^{1+it-iT_{0}}(\omega_{(P_j,Q_j)}(m)+1)},
\]
where $\omega_{(P_j,Q_j)}(m)$ counts the number of prime factors of $m$ in the interval $[P_j,Q_j]$ and $\mathcal{S}_{j}^{*}$ is a subset of $\supp(g)$ (that differs from $\supp(g)$ through the number of prime factors having decreased by one and the possibility of there not being prime factors in $[P_{j},Q_{j}]$). Therefore, our task is reduced to proving that  \begin{equation}\label{eq3.6}
I_j: = D\log Q_j  \sum_{v} \sum_{\chi\mmod q} \int_{\mathcal T_j(\chi)}|Q_{v,j(t,\chi)}R_{v,j}(t,\chi)|^2\rd t \ll P_1^{-1/8} \frac{Tq_0}{X/h} \log^{2k-2}X
\end{equation}
holds for all $j=1,2,3$, where $v$ runs over the range $\floor{D \log P_j}\leq v\leq D\log Q_j$. When $j=1$ we recall the definition of $\mathcal T_1(\chi)$ and note that the application of Lemma \ref{mean-value} yields 
\begin{align}\label{I1}
I_1 &\ll D\log Q_1 	\sum_{\floor{D \log P_1}\leq v\leq D\log Q_1} e^{-2\delta_1v/D} \sum_{\chi \mmod q} \int_{-T}^T |R_{v,1}(t,\chi)|^2\rd t \nonumber\\
&\ll D \log Q_1 \sum_{\floor{D \log P_1}\leq v\leq D\log Q_1} e^{-2\delta_1v/D} \Bigbrac{\frac{T\varphi(q)}{X/q_0} e^{v/D} (\log X)^{2(1+\eps')k\log k} +\log^{2k-2}X}\nonumber\\
&\ll (D\log Q_1)^2 P_1^{-2\delta_1} \Big((\log X)^{2(1+\eps')k\log k} \frac{qTQ_1}{X/q_0}+(\log X)^{2k-2}\Big)\\
&\ll P_1^{-1/8} (\log X)^{2k-2}\frac{T}{(X/q_0)/h}\nonumber,
\end{align}
the last inequality holding in view of the fact that $Q_1\leq \min(H^{2/3},P_{2}^{1/100})$ and $q\leq \log^{100k\log k}X \leq H^{1/10}$ in conjunction with the assumption $H\leq h\leq Y$.

When $j=2$ we recall the definition of $\mathcal T_2(\chi)$ to get
\[
I_2\ll qD\log Q_2 \sum_{\floor{D \log P_2}\leq v\leq D\log Q_2} e^{-2\delta_2v/D} \sup_{\chi\mmod q} \int_{-T}^T |R_{v,2}(t,\chi)|^2\rd t .
\]
One may then apply the bound for  $E_2$ from \cite[pages 26-27]{Sun}, upon noting that $\log^{80k\log k}X\leq Q_1\ll P_2^{1/100}$ and $P_1\ll Q_1^{1/20}$, to conclude that
\[
I_2\ll q (\log X)^{2k-2} \exp \Bigbrac{-\frac{\log P_2}{51}  +\frac{\log Q_1}{2}  +3\log(P_1^{1/6}\log Q_2)}	\ll q(\log X)^{2k-2} Q_1^{-4/3}.
\]

We may then deduce from the restrictions $q\ll \log^{100k\log k}X$, $Q_1\geq \log^{80k\log k} X$ and $P_1\ll Q_1^{1/20}$ the estimate
\begin{align}\label{i-2}
I_2\ll P_1^{-1/8} (\log X)^{2k-2}\ll P_1^{-1/8} (\log X)^{2k-2} \frac{T}{(X/q_0)/h}.
\end{align}

The remaining task is to handle the integral over the range $\mathcal T_3(\chi)$. To such an end we replace the latter by a sum over a one-spaced subset  $\mathcal U(\chi)\subset \mathcal T_{3}(\chi)$ and thus obtain
\[
I_3 \ll q(D\log Q_3)^2 \sup_{\chi\mmod q \atop v\in[\floor{D\log P_3}, D\log Q_3]}  \sum_{t\in \mathcal U(\chi)} |Q_{v,3}(t,\chi)R_{v,3}(t,\chi)|^2.
\]
 Noting that whenever $v\in[\floor{D\log P_3}, D\log Q_3]$ it transpires that $e^{v/D}\geq P_3,$  the latter parameter being defined in (\ref{pq-23}), the application of Lemma \ref{dirichlet-prime} for $\chi$  delivers
 \[
 Q_{v,3}(t,\chi) =\sum_{e^{v/D}\leq p\leq e^{(v+1)/D}} \frac{k\chi(p)}{p^{1+i(t-T_{0})}} \ll \exp(-c(\log X)^{1/13}).
 \]
Meanwhile one has for the principal character $\chi_0$ the identity $t_{\chi_0}=T_{0}$. Thus, whenever $t\in\mathcal U(\chi_0)$ it is apparent that $|t-T_{0}|\geq W\geq (\log X)^{10000k\log k}$. Equipped with this bound we deduce via Lemma \ref{dirichlet-prime} that
\[
Q_{v,3}(t,\chi_0) =\sum_{e^{v/D}\leq p\leq e^{(v+1)/D}} \frac{k\chi_0(p)}{p^{1+i(t-T_{0})}} \ll \frac{\log X}{|t-T_{0}|+1}\ll (\log X)^{-6000k\log k}.
\]
Combining these two estimates and recalling that $q\leq (\log X)^{100k\log k}$, $D\leq P_1^{1/6}$ and $P_1\leq (\log X)^{10000k\log k}$ we may conclude that
\[
I_3 \ll (\log X)^{-8000k\log k} \sup_{\chi\mmod q \atop v\in[\floor{D\log P_3}, D\log Q_3]} \sum_{t\in \mathcal U(\chi)} |R_{v,3}(t,\chi)|^2.
\]
We then apply \cite[Theorem 9.6]{IK} to bound the preceding discrete second moment, and thus obtain
\[
I_3 \ll (\log X)^{-7000k\log k} \sup_{\chi\mmod q \atop v\in[\floor{D\log P_3}, D\log Q_3]}  \bigbrac{1+\frac{q_{0}|\mathcal U(\chi)|T^{1/2}}{Xe^{-v/D}}}.
\]
Meanwhile, we observe that whenever $t\in\mathcal T_3(\chi)$ then there exists some $v$ with $\floor{D\log P_2}\leq v\leq \floor{D\log Q_2}$ for which $|R_{2,v}(1+it)|\geq e^{-\delta_2v/D}$. Consequently, the employment of \cite[Lemma 4.5 (ii)]{Sun} for the choice $P(1+it)=R_{2,v}(1+it)$ entails
\[
|\mathcal U(\chi)|\ll X^{1/2-1/1000}.
\]
By inserting this bound into the estimate for $I_3$ and recalling the assumption $T\geq X/(hq_{0})$ we obtain
\begin{align}\label{i-3}
I_3&\ll (\log X)^{-7000k\log k}\Big(1+X^{-1/2-1/2000}T^{1/2}\Big)\nonumber
\\
&\ll P_1^{-1/8} (\log X)^{2k-2}\Big(1+X^{-1/2-1/2000}T^{1/2}\Big(\frac{hq_{0}T}{X}\Big)^{1/2}\Big)\nonumber
\\
&\ll  P_1^{-1/8} (\log X)^{2k-2} \frac{T}{(X/q_0)/h}
\end{align}
where we used the fact that $q_{0}\leq \log^{100k\log k}X$ and $Q_{3}\ll X^{\varepsilon}.$ Combining the preceding estimates thereby yields (\ref{eq3.6}) and concludes the proof of Proposition \ref{long-short}.\qed

\section{Major arcs analysis}\label{sec4}

Unlike the usual major arc settings, it is challenging to estimate the exponential sum over $[x,x+H_{1}]$ when $H_1\asymp (\log x)^{\Phi(x)}$ because there is no asymptotic formula available for sums of $k$-fold divisor function over such short intervals. Nonetheless, in light of Proposition \ref{long-short} we can still derive an asymptotic formula for $d_k$-bounded multiplicative functions in almost all very short intervals.

In order to begin the discussion we first take $Q=\log^{100k\log k}X$, define the major arcs $\major$ to be the union of the intervals
\begin{align} \label{major-minor}
\major(q,a)= \cup_{q\leq Q} \set{\alpha\in [0,1): \lvert \alpha-a/q\rvert\leq Q^2/H_1}, 
\end{align}
and denote $\minor=\T\backslash\major$. It then seems worth introducing the expression
\begin{align}\label{exponential-fk}
	S_{f_{k}}(\alpha, x)=\sum_{x\leq n\leq x+H_{1}}f_{k}(n)e(\alpha n).
\end{align}
We further write for each $\alpha\in [0,1)$ and integers $a\in \mathbb{Z}$ and $q\in\mathbb{N}$ with $0\leq a\leq q$ and $(a,q)=1$ the frequency $\beta=\alpha-a/q$. We then introuce for $x\in \mathbb{R}$ the parameter $H_{x}=\lfloor H_{1}+x\rfloor-\lfloor x\rfloor$ and consider the auxiliary exponential sum
$$v_{x}(\beta)=e(\beta \lfloor x\rfloor)\sum_{1\leq m\leq H_{x}}e(\beta m).$$ 
\begin{lemma}\label{lem4.1}
Let $\beta\in [-1/2,1/2]$ and $x\in\R $. Then 
$$v_{x}(\beta)\ll \frac{H_{1}}{1+H_{1}\lvert \beta\rvert}.$$
\end{lemma}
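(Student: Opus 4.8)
The plan is to treat $v_x(\beta)$ as a plain geometric sum and interpolate between the trivial bound and the $1/|\beta|$ bound; this is entirely routine, so the proof will be short. First I would record that $H_x=\lfloor H_1+x\rfloor-\lfloor x\rfloor$ is a non-negative \emph{integer} and that $|e(\beta\lfloor x\rfloor)|=1$, so that $|v_x(\beta)|$ equals the modulus of the geometric progression $\sum_{1\leq m\leq H_x}e(\beta m)$, a sum of exactly $H_x$ terms with common ratio $e(\beta)$. Since $H_1\geq 1$ throughout the range of Theorem \ref{thm1.1}, one has $H_x\leq H_1+1\leq 2H_1$, and hence the trivial bound $|v_x(\beta)|\leq H_x\ll H_1$; in particular this already settles the case $\beta=0$.

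For $\beta\neq 0$ I would sum the progression explicitly and bound the numerator by $2$ and the denominator via $|e(\beta)-1|=2|\sin\pi\beta|$, namely
\[
\Bigabs{\sum_{1\leq m\leq H_x} e(\beta m)} = \Bigabs{\frac{e(\beta(H_x+1))-e(\beta)}{e(\beta)-1}} \leq \frac{2}{|e(\beta)-1|} = \frac{1}{|\sin\pi\beta|}.
\]
Then the elementary inequality $|\sin\pi\beta|\geq 2|\beta|$, valid for $|\beta|\leq 1/2$, yields $|v_x(\beta)|\leq \tfrac{1}{2|\beta|}$.

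Finally I would combine the two estimates according to whether $H_1|\beta|\leq 1$ or $H_1|\beta|>1$. In the first regime $\tfrac{H_1}{1+H_1|\beta|}\geq \tfrac{H_1}{2}$, so the trivial bound $|v_x(\beta)|\ll H_1$ already gives the claim. In the second, $1+H_1|\beta|<2H_1|\beta|$, whence $\tfrac{1}{2|\beta|}=\tfrac{H_1}{2H_1|\beta|}<\tfrac{H_1}{1+H_1|\beta|}$, so the bound of the previous paragraph gives the claim. Thus $v_x(\beta)\ll \tfrac{H_1}{1+H_1|\beta|}$ in all cases.

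I expect no genuine obstacle here; the only points warranting a word of care are the observation that $H_x$ is honestly an integer, so that $\sum_{1\leq m\leq H_x}e(\beta m)$ is a clean geometric sum with no fractional boundary term, and the recording of the lower bound $|\sin\pi\beta|\geq 2|\beta|$ on $[-1/2,1/2]$.
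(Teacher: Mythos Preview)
Your proof is correct and follows essentially the same approach as the paper: split according to whether $H_1|\beta|\leq 1$ or $H_1|\beta|>1$, using the trivial bound $|v_x(\beta)|\ll H_1$ in the former case and the geometric-sum bound $|v_x(\beta)|\ll |\beta|^{-1}$ in the latter. The paper's own proof is a two-line sketch recording exactly these two cases, so your version simply supplies the routine details.
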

\begin{proof}
When $\lvert \beta\rvert\leq H_{1}^{-1}$ the above estimate follows trivially. If instead $\lvert \beta\rvert>H_{1}^{-1}$ we employ the bound $v_{x}(\beta)\ll \lvert \beta\rvert^{-1}$.
\end{proof}

In what follows we shall utilise classical major arc manoeuvres to provide a suitable approximation for $S_{f_{k}}(\alpha, x)$. To such an end we first take $Y= X/\log^AX$ for some large constant $A>1$. We also introduce for further use the expressions
\begin{align}\label{E1k}
E_{1,k}(m)=\sum_{q_{0}q_{1}=q}\frac{\sqrt{q_{1}}}{\varphi(q_{1})}\sum_{\chi\neq \chi_{0}\mmod{q_{1}}}\Big\lvert\sum_{\substack{x/q_{0}<n\leq x/q_{0}+m/q_{0}}}f_{k}(q_{0}n)\chi(n)\Big\rvert
\end{align} 
and
\begin{align}\label{e_2}
E_{2,k}(m)=\sum_{q_{0}q_{1}=q}\frac{ \mu(q_{1})^{2}}{\varphi(q_{1})}\Big\lvert\sum_{\frac{x}{q_{0}}< n\leq \frac{x}{q_{0}}+\frac{m}{q_0}}\chi_{0}(n)f_{k}(q_{0}n)-\frac{m}{q_{0}Y}\sum_{\frac{x}{q_{0}}<n\leq \frac{x}{q_{0}}+Y}\chi_{0}(n)f_{k}(q_{0}n)\Big\rvert.	
\end{align}

\begin{lemma}\label{lem4.2}
Let $\alpha\in \T$ and $x\in\mathbb{R}$. Let $a\in \mathbb{Z}$ and $q\in\mathbb{N}$ with $0\leq a\leq q$ and $(a,q)=1$, and let $Q_1$ be a parameter such that $(\log X)^{10k\log k}\leq Q_1^6\leq H_1$. Then,
\begin{align*}&S_{f_{k}}(\alpha, x)=\frac{v_{x}(\alpha-a/q)}{Y}\sum_{q_{0}q_{1}=q}\frac{\mu(q_{1})}{q_{0}\varphi(q_{1})}\sum_{x/q_{0}< n\leq x/q_{0}+Y}\chi_{0}(n)f_{k}(q_{0}n)
\\
&+O\big(H_{1}Q_{1}^{-1}+E_{1,k}(H_{1})+ E_{2,k}(H_{1})\big)+O\Big(\big\lvert \alpha-\frac{a}{q}\big\rvert \sum_{Q_{1}^{2}\leq m\leq H_{x}}\big(E_{1,k}(m)+E_{2,k}(m)\big)\Big).
\end{align*} 
\end{lemma}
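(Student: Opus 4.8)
\noindent\emph{Proof proposal.}
The plan is to split $e(\alpha n)=e(an/q)\,e(\beta n)$ with $\beta=\alpha-a/q$, keep the rational twist $e(an/q)$ attached to the weights, and remove the slowly oscillating factor $e(\beta n)$ by partial summation. Listing the integers of $(x,x+H_1]$ as $\lfloor x\rfloor+1,\dots,\lfloor x\rfloor+H_x$ and putting
\[
C(j)=\sum_{x<n\le \lfloor x\rfloor+j} f_k(n)\,e(an/q),\qquad \phi_j=e\bigl(\beta(\lfloor x\rfloor+j)\bigr),
\]
Abel summation gives $S_{f_k}(\alpha,x)=C(H_x)\phi_{H_x}-\sum_{j=1}^{H_x-1}C(j)(\phi_{j+1}-\phi_j)$ with $|\phi_{j+1}-\phi_j|\ll\min(1,|\beta|)$. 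First I would establish that $C(j)$ has the shape $C(j)=j\,\mathcal D+\mathrm{err}(j)$ with
\[
\mathcal D=\frac1Y\sum_{q_0q_1=q}\frac{\mu(q_1)}{q_0\varphi(q_1)}\sum_{\frac{x}{q_0}<m\le \frac{x}{q_0}+Y}f_k(q_0m)\chi_0(m);
\]
then the linear part reassembles, on running Abel summation backwards, into exactly $\mathcal D\sum_{j=1}^{H_x}\phi_j=\mathcal D\,v_x(\alpha-a/q)$, which is the claimed main term.

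To obtain the decomposition of $C(j)$ I would split into residue classes $b \bmod q$, write $n=q_0m$ with $q_0=(b,q)$, $q_1=q/q_0$, detect $m\equiv b/q_0\pmod{q_1}$ by orthogonality of the characters mod $q_1$, and sum the result against $e(ab/q)$ over $b$. The non‑principal characters, after the $b$‑sum collapses into a Gauss sum of modulus $\le \sqrt{q_1}$, contribute precisely the quantity $E_{1,k}(j)$ of (\ref{E1k}); the principal character contributes a Ramanujan sum $c_{q_1}(a)=\mu(q_1)$ (using $(a,q)=1$), which supplies the factor $\mu(q_1)$ in $\mathcal D$ and, upon replacing each short principal‑character sum by its scaled long average, the quantity $E_{2,k}(j)$ of (\ref{e_2}) weighted by $\mu(q_1)^2/\varphi(q_1)$. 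This yields $\mathrm{err}(j)\ll E_{1,k}(j)+E_{2,k}(j)+(\log X)^{(1+\eps')k\log k}$, where the last term absorbs the $O(1)$ integer‑part discrepancies between the ranges $\tfrac{x}{q_0}<m\le \tfrac{\lfloor x\rfloor+j}{q_0}$ and those appearing in the definitions of $E_{1,k},E_{2,k}$, via the pointwise bound $f_k(n)\le k^{\Omega(n)}\le (\log X)^{(1+\eps')k\log k}$ valid on $\supp f_k$.

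It remains to insert $C(j)=j\mathcal D+\mathrm{err}(j)$ into the Abel identity and estimate the error. The endpoint term gives $\mathrm{err}(H_x)\ll H_1Q_1^{-1}+E_{1,k}(H_1)+E_{2,k}(H_1)$, using $|H_x-H_1|\le 1$ and $(\log X)^{(1+\eps')k\log k}\ll H_1Q_1^{-1}$, which holds since $(\log X)^{10k\log k}\le Q_1^6\le H_1$. For the short steps $1\le j<Q_1^2$ I would discard $\phi_{j+1}-\phi_j$ trivially and bound $C(j)$ by $j(\log X)^{(1+\eps')k\log k}$, so these steps contribute $\ll Q_1^{4}(\log X)^{(1+\eps')k\log k}\ll H_1Q_1^{-1}$ by the same hypotheses on $Q_1$. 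For the remaining steps $Q_1^2\le j\le H_x$ I would use $|\phi_{j+1}-\phi_j|\ll|\alpha-a/q|$, which yields the term $|\alpha-a/q|\sum_{Q_1^2\le m\le H_x}\bigl(E_{1,k}(m)+E_{2,k}(m)\bigr)$ (together with lower‑order pieces absorbed as above), completing the proof.

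The conceptual ingredients here — partial summation to strip off $e(\beta n)$, character orthogonality to isolate the main density, and the short‑versus‑long comparison encoded by $E_{2,k}$ — are standard. The only genuine obstacle is the accounting: producing the exact weights $\sqrt{q_1}$ (from the Gauss sum) and $\mu(q_1)^2/\varphi(q_1)$ (from the Ramanujan sum) so that the errors match the definitions of $E_{1,k}$ and $E_{2,k}$, controlling the integer‑part discrepancies, and checking that truncating at level $Q_1^2$ and invoking only the crude pointwise bound on $f_k$ costs no more than $O(H_1Q_1^{-1})$ — which is exactly what the hypothesis $(\log X)^{10k\log k}\le Q_1^6\le H_1$ is there to guarantee.
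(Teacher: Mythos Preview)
Your proposal is correct and follows essentially the same route as the paper's proof: Abel summation in the variable $m$ (your $C(j)$ is the paper's $B(m)$), the splitting of $B(m)$ via $(n,q)=q_0$ and character orthogonality mod $q_1$, the Gauss sum bound $\ll\sqrt{q_1}$ producing $E_{1,k}$, the Ramanujan sum $c_{q_1}(a)=\mu(q_1)$ giving the main density, the short-versus-long replacement producing $E_{2,k}$, the truncation at $Q_1^2$ handled by the crude pointwise bound $f_k(n)\le(\log X)^{(1+\eps')k\log k}$, and the reassembly of the linear part into $v_x(\beta)$. Your extra care with the integer-part discrepancies and with passing from $H_x$ to $H_1$ is sound and slightly more explicit than the paper, but otherwise the two arguments coincide.
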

\begin{proof}

We write $\beta=\alpha-a/q$. For $0\leq m\leq H_{x}$ we consider 
$$
B(m)=\sum_{x< n\leq x+m}f_{k}(n)e(an/q),
$$
 and note that an application of summation by parts delivers 
\begin{align}\label{Sf}
S_{f_{k}}(\alpha, x) =& B(H_{x})e(\beta(\lfloor H_{1}+x\rfloor +1))+\sum_{m\leq H_{x}}B(m)e(\beta \lfloor x\rfloor)\big(e(\beta m)-e(\beta(m+1))\big).
\end{align}
We split for convenience the second sum into the ranges $[1,Q_1^2]$ and $(Q_1^2,H_x]$, and observe upon recalling (\ref{f_k}) that the sum over the first interval is negligible, namely
\begin{align}\label{eqB}
\sum_{m \leq Q_1^2} \lvert B(m)\rvert &\ll Q_1^2 \sum_{x\leq n \leq x+Q_1^2} d_k(n) 1_{\Omega(n)\leq (1+\eps')k\log \log X}\nonumber
\\
&\ll Q_1^2 k^{ (1+\eps')k\log\log  X}	 \sum_{x\leq n \leq x+Q_1^2} 1 \ll Q_1^4 (\log X)^{(1+\eps') k\log k}.
\end{align}

We next partition the summation defining $B(m)$ according to the value of $(n,q)$, observing that
\begin{align*}
B(m) &= \sum_{q_0|q} \sum_{\substack{x/q_{0}<n\leq x/q_{0}+m/q_{0} \\ (n,q/q_{0})=1}} f_k(q_0n) e\Bigbrac{\frac{an}{q/q_0}}	\\
&= \sum_{q_0|q} \twosum{r \mmod{q/q_0}}{(r,q/q_0)=1} e\Bigbrac{\frac{ar}{q/q_0}} \sum_{\substack{x/q_{0}<n\leq x/q_{0}+m/q_{0} \\ n \equiv r \mmod {q/q_0}}} f_k(q_0n).
\end{align*}
By introducing Dirichlet characters $\chi\mmod{q/q_0}$ and employing orthogonality, one has
\[
B(m) = \sum_{q_0|q} \twosum{r \mmod{q/q_0}}{(r,q/q_0)=1} e\bigbrac{\frac{ar}{q/q_0}} \varphi(q/q_0)^{-1} \sum_{\chi\mmod{q/q_0}} \bar{\chi} (r) \sum_{\frac{x}{q_{0}}<n\leq \frac{x}{q_{0}}+\frac{m}{q_{0}}} f_k(q_0n) \chi(n).
\]
We set $q_1=q/q_0$ for the sake of concission to note that then 
\begin{align}\label{eq9.1}B(m)=\widetilde{E_{1,k}}(m)+\sum_{q_{0}q_{1}=q}\frac{\mu(q_{1})}{\varphi(q_{1})}\sum_{\substack{x/q_{0}<n\leq x/q_{0}+m/q_{0}\\ (n,q_{1})=1}}f_{k}(q_{0}n),
\end{align}
wherein
$$
\widetilde{E_{1,k}}(m)=\sum_{q_{0}q_{1}=q}\frac{1}{\varphi(q_{1})}\sum_{\chi\neq \chi_{0}\mmod{q_{1}}}C_{\chi}(a,q_1)\sum_{\substack{x/q_{0}<n\leq x/q_{0}+m/q_{0}}}f_{k}(q_{0}n)\chi(n),
$$
and $$C_{\chi}(a,q_1) = \sum_{\substack{l \mmod{q_{1}}\\ (l,q_{1})=1}}e(al/q_{1})\bar\chi(l).$$ The assumption $(a,q)=1$ permits one to deduce the bound $C_{\chi}(a,q_1)\ll q_1^{1/2}$, from where it follows upon recalling (\ref{E1k}) that
\[
\widetilde{E_{1,k}}(m)\ll E_{1,k}(m).
\]

We shall next address the second term in the right side of (\ref{eq9.1}). To such an end we anticipate that the function $f_k(q_0n)\chi(n)$ in short intervals would exhibit a similar behaviour to its counterpart in long intervals. Equipped with this remark we then express the second summand in (\ref{eq9.1}) as 
$$\sum_{q_{0}q_{1}=q}\frac{\mu(q_{1})}{\varphi(q_{1})}\sum_{\substack{x/q_{0}<n\leq x/q_{0}+m/q_{0}\\ (n,q_{1})=1}}f_{k}(q_{0}n)=B_{1}(m)+O\big(E_{2,k}(m)\big),$$
where upon recalling that $Y=X\log^{-A}X$ the first term is defined by means of
$$B_{1}(m)=\sum_{q_{0}q_{1}=q}\frac{\mu(q_{1})}{\varphi(q_{1})}\frac{m}{q_{0}Y}\sum_{x/q_{0}< n\leq x/q_{0}+Y}\chi_{0}(n)f_{k}(q_{0}n),$$
and $E_{2,k}(m)$ was introduced in (\ref{e_2}).

Inserting the preceding equations and (\ref{eqB}) into (\ref{Sf}) and employing the triangle inequality and the mean value theorem it follows by the assumption $Q_1^6\leq H_1$ that
\begin{align}\label{eq6.211}&e(-\beta \lfloor x\rfloor)S_{f_{k}}(\alpha, x)=B_{1}(H_{x})e\big(\beta(H_{x}+1)\big)+\sum_{Q_{1}^{2}\leq m\leq H_{x}}B_{1}(m)\big(e(\beta m)-e(\beta(m+1))\big)\nonumber
\\
&+O\big(H_{1}Q_{1}^{-1}+ E_{1,k}(H_{x})+ E_{2,k}(H_{x})\big)+O\Big(\lvert \beta\rvert\sum_{Q_{1}^{2}\leq m\leq H_{x}}\Big( E_{1,k}(m)+ E_{2,k}(m)\Big)\Big).
\end{align}
We then observe that
\[
H_x e\big(\beta(H_{x} +1)\big)+\sum_{m\leq H_x} m\bigbrac{e(m\beta)-e((m+1)\beta)} = \sum_{m\leq H_x} e(m\beta) = v_x(\beta) e(-\beta\lfloor x\rfloor),
\]
and conclude the proof by combining the preceding line with (\ref{eq6.211}).
\end{proof}

The upcoming discussion shall be devoted to estimate the contribution of the error terms in Lemma \ref{lem4.2} on average. To such an end it seems convenient introducing beforehand some notation. Let $a\in \mathbb{Z}$ and $q\in\mathbb{N}$ with $0\leq a\leq q\leq Q$ and $(a,q)=1$. For each $x\in [X,2X]$ and $\alpha\in \major(a,q)$ we put
\begin{equation}
\label{Ups}\Upsilon_{k}(\alpha,x)=\frac{v_{x}(\alpha-a/q)}{Y}\sum_{q_{0}q_{1}=q}\frac{\mu(q_{1})}{q_{0}\varphi(q_{1})}\sum_{x/q_{0}< n\leq x/q_{0}+Y}\chi_{0}(n)f_{k}(q_{0}n).
\end{equation}
In view of the fact that the arcs $ \major(a,q)$ are disjoint, this defines a function $\Upsilon_{k}(\cdot, x):\major\to \C$. We shall also make an abuse of notation by denoting $\lvert \beta\rvert$ to the function defined in $\major$ by $\lvert \alpha-a/q\rvert$ whenever $\alpha\in\major(a,q)$.

\begin{lemma}[Error term bounds on average]\label{lem4.3}
Let $k\geq l\geq 2$ and $h\in [1,X^{1-\varepsilon}]$. Let $f_k:[X,2X]\to \R$ be as in (\ref{f_k}) and $H_1\geq(\log X)^{\Phi(X)}$, where $\Phi(X)\to \infty$ is as in Theorem \ref{thm1.1}. Then one has
\begin{align*}
\int_{X}^{2X}\Big\lvert\int_\major \Big(S_{f_{k}}(\alpha,x) \overline{S_{f_{l}}(\alpha,x)}- \Upsilon_{k}(\alpha,x)\overline{\Upsilon_{l}(\alpha,x)}\Big)e(h\alpha)\rd\alpha\Big\rvert \rd x\ll \frac{XH_1(\log X)^{k+l-2}}{Q}.
\end{align*}
\end{lemma}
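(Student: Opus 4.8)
The plan is to expand the difference $S_{f_k}(\alpha,x)\overline{S_{f_l}(\alpha,x)}-\Upsilon_k(\alpha,x)\overline{\Upsilon_l(\alpha,x)}$ in the standard telescoping way, writing it as $(S_{f_k}-\Upsilon_k)\overline{S_{f_l}}+\Upsilon_k\overline{(S_{f_l}-\Upsilon_l)}$, and then estimate each term separately, exploiting the symmetry in $k,l$. For the cross term I would apply the triangle inequality inside the integral over $\major$ and then Cauchy--Schwarz in $x$, reducing matters to bounding $\int_X^{2X}\int_\major |S_{f_k}(\alpha,x)-\Upsilon_k(\alpha,x)|^2\rd\alpha\rd x$ against $\int_X^{2X}\int_\major |S_{f_l}(\alpha,x)|^2\rd\alpha\rd x$ (and the analogous swapped pair). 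The second factor is handled by extending the $\alpha$-integral to all of $\T$, Plancherel, and the pointwise bound $f_l\le d_l$ together with the second-moment estimate $\sum_{n\sim X} d_l(n)^2 \ll X(\log X)^{l^2}$ — actually better to use that $f_l$ is supported on $S_l$, or just accept a bound of size $XH_1(\log X)^{l^2}$, since it will be multiplied by a genuinely small quantity coming from the first factor.

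The crux is therefore to show $\int_X^{2X}\int_\major |S_{f_k}(\alpha,x)-\Upsilon_k(\alpha,x)|^2\rd\alpha\rd x$ is small — ideally $\ll XH_1^2(\log X)^{2k-2}Q^{-C}$ for a suitably large $C$ so that after Cauchy--Schwarz and multiplication by the $(\log X)^{l^2}$-sized second factor one still saves a power of $Q$ (recall $Q=(\log X)^{100k\log k}$, which is a large power of $\log X$, so there is ample room). To get this I would insert the expression from Lemma \ref{lem4.2}: on each arc $\major(a,q)$ one has $S_{f_k}(\alpha,x)-\Upsilon_k(\alpha,x) \ll H_1Q_1^{-1}+E_{1,k}(H_1)+E_{2,k}(H_1)+|\beta|\sum_{Q_1^2\le m\le H_x}(E_{1,k}(m)+E_{2,k}(m))$, choosing $Q_1$ to be a small power of $\log X$ with $Q_1^6\le H_1$ (legitimate since $H_1\ge(\log X)^{\Phi(X)}$ and $\Phi(X)\to\infty$), say $Q_1=(\log X)^{10k\log k}$ or a fixed large power of $Q$. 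Then I would integrate the square of this bound over $\alpha\in\major(a,q)$ — the measure of each arc is $2Q^2/H_1$, and the factor $|\beta|$ contributes at most $(Q^2/H_1)$, so $\int_{\major(a,q)}|\beta|^2\rd\alpha \ll Q^2/H_1 \cdot (Q^2/H_1)^2$ — and then sum over $a\bmod q$ and $q\le Q$. The key analytic input is that the $x$-averages $\int_X^{2X} E_{1,k}(m)^2\rd x$ and $\int_X^{2X} E_{2,k}(m)^2\rd x$ are each $\ll P_1^{-1/8}XH^2(\log X)^{2k-2}$-type quantities: for $E_{2,k}$ this is exactly the principal-character bound of Proposition \ref{long-short}(1) (with $g=f_k$, whose structure matches the hypotheses of that proposition), after summing the $\frac{\mu(q_1)^2}{\varphi(q_1)}$ weights which contribute a bounded factor; for $E_{1,k}$ one uses Proposition \ref{long-short}(2) (non-principal characters) combined with Cauchy--Schwarz to absorb the $\sqrt{q_1}/\varphi(q_1)$ weights and the divisor-sum over $q_0q_1=q$. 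One must be a little careful that Proposition \ref{long-short} is stated for $H$ in a range $[\log^{1000k\log k}X, X^{1-\eps}]$ — but $H_1$ and all the relevant $m\in[Q_1^2,H_x]$ with $m$ not too small lie in this range; for the small $m\le Q_1^2$ one falls back on the trivial bound as in (\ref{eqB}).

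The main obstacle I anticipate is bookkeeping the interplay between the three scales $H_1$, $Q$, $Q_1$ and the various $m$ in the sum $\sum_{Q_1^2\le m\le H_x}(E_{1,k}(m)+E_{2,k}(m))$: one cannot simply apply Proposition \ref{long-short} to each $m$ and sum trivially, because that would lose a factor of $H_1$. Instead I would use Cauchy--Schwarz in $m$ (there are $\ll H_1$ terms) to bound $\bigl(\sum_m E_{i,k}(m)\bigr)^2 \ll H_1\sum_m E_{i,k}(m)^2$, then integrate in $x$ and apply Proposition \ref{long-short} for each fixed $m$, getting $\ll H_1\cdot H_1\cdot P_1^{-1/8}X m^2/\!\ldots$ — wait, this needs the sharper observation that $\int_X^{2X}E_{i,k}(m)^2\rd x\ll P_1^{-1/8}Xm^2(\log X)^{2k-2}$ uniformly, at which point $\sum_{m\le H_1}m^2\ll H_1^3$ and the $|\beta|^2\le (Q^2/H_1)^2$ factor and the arc measure $Q^2/H_1$ combine to give a total of $\ll P_1^{-1/8}X(\log X)^{2k-2}\cdot Q^6/H_1 \cdot H_1^2 \ll P_1^{-1/8}XH_1^2(\log X)^{2k-2}Q^6$ — and since $P_1$ is a larger power of $\log X$ than $Q^6\cdot(\text{the }(\log X)^{l^2}\text{ from the other factor})$ by the choice in (\ref{pq-1}) (or one simply picks $Q_1$ as a large enough fixed power of $Q$), the saving survives. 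Closing this numerology cleanly — making sure the power of $\log X$ saved from $P_1^{-1/8}$ (or $Q_1^{-1}$) beats everything lost from $Q^2/H_1$ factors, the $(\log X)^{l^2}$ from the trivial second moment, and the Cauchy--Schwarz in $m$ — is the delicate part, but the ranges are generous enough (the gap between $H_1\ge(\log X)^{\Phi(X)}$ with $\Phi\to\infty$ and the fixed powers $Q,Q_1$) that it goes through.
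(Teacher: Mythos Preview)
Your overall architecture is exactly that of the paper: telescope, apply Lemma~\ref{lem4.2}, then use Cauchy--Schwarz together with Proposition~\ref{long-short} to show that the error terms $E_{1,k}(m),E_{2,k}(m)$ are small on average. One cosmetic difference: the paper pairs the error with $\Upsilon$ rather than with $S_{f_l}$, using the pointwise Shiu bound $|\Upsilon_k(\alpha,x)|\ll H_1(\log X)^{k-1}q^{-1+\eps}$ (equation~(\ref{eq4.5})) instead of a Plancherel estimate for $\int_{\T}|S_{f_l}|^2$. Your route loses an extra $(\log X)^{O(l\log l)}$ but this is harmless given the size of $P_{1^{(2)}}^{-1/8}$, so that difference is immaterial.

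There is, however, a genuine gap in your parameter choice. You propose taking the Lemma~\ref{lem4.2} parameter $Q_1$ to be a \emph{fixed} power of $\log X$, say $(\log X)^{10k\log k}$ or ``a large enough fixed power of $Q$''. This does not work. The sum $\sum_{Q_1^2\le m\le H_x}E_{i,k}(m)$ then ranges over $m$ as small as $(\log X)^{20k\log k}$, and for such $m$ you cannot invoke Proposition~\ref{long-short} with the parameters $(P_1,Q_1)=(P_{1^{(2)}},Q_{1^{(2)}})$ that yield the strong saving $P_{1^{(2)}}^{-1/8}=(\log X)^{-1250k\log k}$: the hypothesis $Q_1\le H^{2/3}$ of that proposition forces $m/q_0\ge Q_{1^{(2)}}^{3/2}=(\log X)^{0.15\Phi(X)}$, which exceeds every fixed power of $\log X$. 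The only alternative parameter pair compatible with the structure of $S_k$ and small $m$ is $(P_{1^{(1)}},Q_{1^{(1)}})$, but then the saving is merely $P_{1^{(1)}}^{-1/8}=(\log X)^{-\Psi(X)/8}$ with $\Psi(X)\to 0$, which cannot absorb the $Q^6$ loss (let alone your Plancherel loss). The fix is simple and is what the paper does: take the Lemma~\ref{lem4.2} parameter $Q_1=Q_{1^{(2)}}=(\log X)^{0.1\Phi(X)}$, a growing power of $\log X$. This simultaneously makes the $H_1Q_1^{-1}$ term negligible and guarantees that every $m\ge Q_1^2$ lies in the range where Proposition~\ref{long-short} applies with the strong parameters. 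Your closing remark that ``the ranges are generous enough'' is right in spirit, but the generosity must be spent on $Q_1$, not just on $H_1$.
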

\begin{proof}
We write for convenience $(k_{0},k_{1}):=(k,l)$. We employ Lemma \ref{lem4.2} for the choice $Q_1=Q_{1^{(2)}}$ as defined in (\ref{pq-11}) to deduce that the left side of the above equation is bounded above by a constant times
\begin{align}\label{eq4.4}\sum_{i\in\mathbb{Z}_{2}}&\Bigg(\int_{X}^{2X}\int_{\major}\lvert\Upsilon_{k_{i}}(\alpha,x)\rvert\Big(H_{1}Q_{1}^{-1}+\lvert \beta\rvert\sum_{Q_{1}^{2}\leq m\leq H_{x}}\big(E_{1,k_{i+1}}(m)+E_{2,k_{i+1}}(m)\big)\Big)\rd\alpha \rd x\nonumber
\\
&+\int_{X}^{2X}\int_{\major}\lvert\Upsilon_{k_{i}}(\alpha,x)\lvert\big( E_{1,k_{i+1}}(H_{x})+ E_{2,k_{i+1}}(H_{x})\big)\rd\alpha \rd x\nonumber
\\
&+\int_{X}^{2X}\int_{\major}\Big(\big(H_{1}Q_{1}^{-1}\big)^{2}+H_{1}\lvert \beta\rvert^{2}\sum_{Q_{1}^{2}\leq m\leq H_{x}}\big(E_{1,k_{i}}(m)^{2}+E_{2,k_{i}}(m)^{2}\big)\Big)d\alpha  \rd x\nonumber
\\
&+\int_{X}^{2X}\int_{\major} \big(E_{1,k_{i}}(H_{x})^{2}+ E_{2,k_{i}}(H_{x})^{2}\big)\rd\alpha \rd x\Bigg).
\end{align}
We first note by (\ref{major-minor}) that $\text{meas}(\major)\leq Q^{3}H_{1}^{-1}$ and that a direct application of Shiu's bound (\cite[Theorem 1.1]{Shiu}) in conjunction with a trivial bound for $v_{x}(\beta)$ yields
\begin{align}\label{eq4.5}\lvert \Upsilon_{k}(\alpha,x)\rvert&\ll \frac{\lvert v{_x}(\beta)\rvert}{Y}\sum_{q_{0}q_{1}=q}\frac{d_{k}(q_{0})}{q_{0}\varphi(q_{1})}\sum_{x/q_{0}< n\leq x/q_{0}+Y}d_{k}(n)\nonumber\\
&\ll H_{1} (\log X)^{k-1}\sum_{q_{0}q_{1}=q}\frac{d_{k}(q_{0})}{q_{0}\varphi(q_{1})}\ll H_{1}(\log X)^{k-1}q^{-1+\varepsilon},
\end{align}
the combination of both estimates entailing
\begin{align}\label{first-term}
H_{1}Q_{1}^{-1}\int_{\major}\lvert\Upsilon_{k}(\alpha,x)\rvert \rd\alpha\ll H_{1}Q^{3}(\log X)^{k-1}Q_{1}^{-1}\ll H_1(\log X)^{k+l-2}Q^{-2}.	
\end{align}
Note that in the preceding line we employed the fact that $Q^{5}= (\log x)^{500k\log k}$ and  $Q_{1}=(\log X)^{\Phi(X)}$. Similarly,
$$\int_{\major}\big(H_{1}Q_{1}^{-1}\big)^{2}\rd\alpha\ll H_{1}Q^{3}Q_{1}^{-2}\ll H_1(\log X)^{k+l-2}Q^{-2}.$$

We next shift the attention to the term involving $E_{1,k}(m)^{2}$ and apply Cauchy-Schwarz inequality to deduce that
\begin{align*}
&\int_{\major}\lvert \beta\rvert^{2}\sum_{Q_{1}^{2}\leq m\leq H_{x}}E_{1,k}(m)^{2}\rd\alpha  
\\
&\ll Q^{\varepsilon}\sum_{q\leq Q}\sum_{q_{0}q_{1}=q}\frac{qq_{1}}{\varphi(q_{1})}\int_{\lvert \beta\rvert\leq \frac{Q^{2}}{qH_{1}}}\lvert \beta\rvert^{2}\sum_{\substack{\chi\neq \chi_{0}\mmod{q_{1}}\\ Q_{1}^{2}\leq m\leq H_{x}}}\Big\lvert \sum_{x/q_{0}< n\leq x/q_{0}+m/q_{0}}f_{k}(q_{0}n)\chi(n)\Big\rvert^{2}\rd\beta.
\end{align*}
When $Q_{1}^{2}\leq m\leq H_{x}$ we then apply Proposition \ref{long-short} for the choices $P_{1}=P_{1^{(2)}}, Q_{1}=Q_{1^{(2)}}$, $T_0=0$ and the function $g(n)=f_k(n)$ to get 
$$
\sum_{\chi\neq\chi_0\mmod q} \int_{X}^{2X} \Bigabs{ \sum_{\frac{x}{q_{0}}<n\leq \frac{x}{q_{0}}+\frac{m}{q_{0}}} f_{k}(q_{0}n)\chi(n)}^2\,\rd x\ll P_1^{-1/8} X (mq_{0}^{-1})^{2}(\log X)^{2k-2}.
$$
Combining the preceding estimates delivers
\begin{align}\label{eq4.7}
\int_{X}^{2X}H_{1}&\int_{\major}\lvert \beta\rvert^{2}\sum_{Q_{1}^{2}\leq m\leq H_{x}}E_{1,k}(m)^{2}\rd\alpha \rd x\nonumber\\
& \ll P_1^{-1/8} X (\log X)^{2k-2} H_1 Q^\eps \sum_{Q_1^2<m \leq H_1} m^{2} \sum_{q\leq Q} \sum_{q_0q_1=q}\frac{qq_1}{q_0^2\varphi(q_{1})} \int_{\lvert \beta\rvert\leq \frac{Q^{2}}{qH_{1}}}\lvert \beta\rvert^{2} \rd\beta\nonumber\\
& \ll P_{1}^{-1/8}XH_{1}^{4}(\log X)^{2k-2} Q^{\varepsilon}\sum_{q\leq Q}q\int_{\lvert \beta\rvert\leq \frac{Q^{2}}{qH_{1}}}\lvert \beta\rvert^{2} \rd\beta\nonumber
\\
&\ll  P_{1}^{-1/8}XH_{1}(\log X)^{2k-2} Q^{6+\varepsilon},
\end{align}
as desired in view of the definitions $P_1=(\log X)^{10000k\log k}$ and $Q=(\log X)^{100k\log k}$. In an analogous manner, we note that
\begin{align*}
&H_{1}\int_{X}^{2X}\int_{\major}\lvert \beta\rvert^{2}\sum_{Q_{1}^{2}\leq m\leq H_{x}}E_{2,k}(m)^{2}\rd\alpha  \rd x
\\
&\ll Q^{\varepsilon}H_1\sum_{q\leq Q}\sum_{q_{0}q_{1}=q}\frac{q}{\varphi(q_{1})^{2}}\int_{\lvert \beta\rvert\leq \frac{Q^{2}}{qH_{1}}}\lvert \beta\rvert^{2}\rd\beta\sum_{\substack{ Q_{1}^{2}\leq m\leq 2H_{1}}}\int_{X}^{2X}\lvert G_{q_{0}}(m,x)\rvert^{2}\rd x,\nonumber
\end{align*}
where 
$$G_{q_{0}}(m,x)=\sum_{\substack{x/q_{0}< n\leq x/q_{0}+m/q_{0}\\ (n,q_{1})=1}}f_{k}(q_{0}n)-\frac{m}{q_{0}Y}\sum_{\substack{x/q_{0}< n\leq x/q_{0}+Y\\ (n,q_{1})=1}}f_{k}(q_{0}n).$$
In virtue of the inequality $m/q_{0}\geq Q_{1}^{3/2}$ whenever $Q_{1}^{2}\leq m$ as $X\to\infty$, we may now apply Proposition \ref{long-short} to the inner integral in the above line and obtain
\begin{align}\label{eq4.66}
&H_{1}\int_{X}^{2X}\int_{\major}\lvert \beta\rvert^{2}\sum_{Q_{1}^{2}\leq m\leq H_{x}}E_{2,k}(m)^{2}\rd\alpha \rd x \nonumber
\\
&\ll Q^{\varepsilon}P_1^{-1/8}XH_{1}\log^{2k-2}X\sum_{q\leq Q}\sum_{q_{0}q_{1}=q}\frac{q}{\varphi(q_{1})^{2}}\int_{\lvert \beta\rvert\leq \frac{Q^{2}}{qH_{1}}}\lvert \beta\rvert^{2}\rd\beta \sum_{\substack{ Q_{1}^{2}\leq m\leq H_{1}}}(m/q_{0})^{2}\nonumber
\\
&\ll Q^{6+\varepsilon}P_1^{-1/8}XH_{1}\log^{2k-2}X\sum_{q\leq Q}\sum_{q_{0}q_{1}=q}\frac{1}{q^{2}q_{0}^{2}\varphi(q_{1})^{2}}\ll Q^{6+\varepsilon}P_1^{-1/8}XH_{1}\log^{2k-2}X.
\end{align}
We merely indicate for the sake of concission that analogous bounds may be obtained to estimate the integrals of $E_{1,k}(H_{x})^{2}$ and $E_{2,k}(H_{x})^{2}$ on the fourth line of (\ref{eq4.4}). The preceding discussion thereby enables one to deduce that the third and fourth integrals in (\ref{eq4.4}) are $O(XH_1(\log X)^{k+l-2}Q^{-2}).$ 

In order to estimate the first and the second one we first observe that (\ref{eq4.5}) combined with the bound $\text{meas}(\major)\leq Q^{3}H_{1}^{-1}$ yields the crude estimate
$$
\int_{\major}\lvert\Upsilon_{k}(\alpha,x)\rvert^{2} \rd\alpha\ll H_{1}Q^{3}\log^{2k-2}X.
$$ Consequently, the preceding equation in conjunction with (\ref{eq4.5}) and an application of Cauchy-Schwarz combined with (\ref{eq4.7}) and (\ref{eq4.66}) delivers
\begin{align*}
&\int_{X}^{2X}\int_{\major}\lvert\Upsilon_{k}(\alpha,x)\rvert\Big(H_{1}Q_{1}^{-1}+\lvert \beta\rvert\sum_{Q_{1}^{2}\leq m\leq H_{x}}\big(E_{1,l}(m)+E_{2,l}(m)\big)\Big)\rd\alpha \rd x
\\
&+\int_{X}^{2X}\int_{\major}\lvert\Upsilon_{k}(\alpha,x)\lvert\big( E_{1,l}(H_{x})+ E_{2,l}(H_{x})\big)\rd\alpha \rd x\ll Q^{9/2}P_1^{-1/16}XH_{1}\log^{2k-2}X,
\end{align*}
which concludes the proof in view of the assumptions on $Q$ and $P_{1}$.
\end{proof}

We next present a technical lemma which shall be employed on numerous occasions in the rest of the major arc analysis. To such an end we first consider $a\in \mathbb{Z}$ and $q\in\mathbb{N}$ with $0\leq a\leq q\leq Q$ and $(a,q)=1$ and define for each $\alpha\in \major(a,q)$ the functions
 \begin{equation}\label{Falp}
 F(\alpha,x)=v_{x}(\alpha-a/q)\sum_{q_{0}q_{1}=q}\frac{\mu(q_{1})}{q_{0}\varphi(q_{1})}f_{q_{0},q_{1}}(x)
 \end{equation}
  and 
  $$G(\alpha,x)=v_{x}(\alpha-a/q)\sum_{q_{0}q_{1}=q}\frac{\mu(q_{1})}{q_{0}\varphi(q_{1})}g_{q_{0},q_{1}}(x),$$ 
  where $f_{q_{0},q_{1}}:[X,2X]\rightarrow \mathbb{R}$ and $g_{q_{0},q_{1}}:[X,2X]\rightarrow \mathbb{R}$ are given functions. The reader may find it worth noting as in (\ref{Ups}) that this defines a function in $\major$.
\begin{lemma}\label{lem4.4}
Let $C>0$ be a fixed constant and $F(x),G(x)\geq 1$ with the property that $f_{q_{0},q_{1}}(x)\ll d_{k}(q)^{C}F(x)$ and $g_{q_{0},q_{1}}(x)\ll d_{k}(q)^{C}G(x)$. Then,
\begin{equation*}
\sum_{\lvert h\rvert \leq H_{2}}\Big\lvert\int_\major F(\alpha,x) \overline{G(\alpha,x)}e(h\alpha)\rd\alpha\Big\rvert\ll  H_1H_{2}F(x)G(x).
\end{equation*}

\end{lemma}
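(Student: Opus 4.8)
The goal is to bound the $h$-sum of the major-arc integral of $F(\alpha,x)\overline{G(\alpha,x)}e(h\alpha)$ by $H_1H_2F(x)G(x)$. The first move is to expand the product $F(\alpha,x)\overline{G(\alpha,x)}$ using the definitions in \eqref{Falp}. On the arc $\major(a,q)$ the two functions share the same factor $v_x(\alpha-a/q)$, so the product carries $|v_x(\alpha-a/q)|^2$ together with the arithmetic coefficients $\sum_{q_0q_1=q}\tfrac{\mu(q_1)}{q_0\varphi(q_1)}f_{q_0,q_1}(x)$ and the conjugate analogue for $g$. By the hypothesis $f_{q_0,q_1}(x)\ll d_k(q)^C F(x)$ and $g_{q_0,q_1}(x)\ll d_k(q)^C G(x)$, the arithmetic coefficient attached to each $q$ is $O(d_k(q)^{2C+1}F(x)G(x))$ after summing trivially over the divisors $q_0q_1=q$ (using $\sum_{q_0q_1=q}\tfrac{1}{q_0\varphi(q_1)}\ll \tfrac{d(q)}{\varphi(q)}\ll q^{\eps}$). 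So it suffices to show
\[
\sum_{q\leq Q} d_k(q)^{2C+1} \sum_{|h|\leq H_2} \int_{|\beta|\leq Q^2/H_1} |v_x(\beta)|^2 \,\rd\beta \ll H_1H_2,
\]
and then absorb $F(x)G(x)$.

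\textbf{Key step: the $v_x$ integral.} The inner integral is independent of $h$ and of $a$, so the $h$-sum just contributes a factor $2H_2+1 \ll H_2$, and the sum over residues $a$ modulo $q$ contributes a factor $\varphi(q)\leq q$. Thus the whole thing is
\[
\ll H_2 \sum_{q\leq Q} q\, d_k(q)^{2C+1} \int_{|\beta|\leq Q^2/H_1} |v_x(\beta)|^2\,\rd\beta.
\]
By Lemma \ref{lem4.1} we have $|v_x(\beta)|\ll H_1/(1+H_1|\beta|)$, hence
\[
\int_{|\beta|\leq Q^2/H_1} |v_x(\beta)|^2\,\rd\beta \ll \int_{-\infty}^{\infty} \frac{H_1^2}{(1+H_1|\beta|)^2}\,\rd\beta = H_1\int_{-\infty}^{\infty}\frac{\rd u}{(1+|u|)^2} \ll H_1,
\]
after the substitution $u=H_1\beta$. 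Therefore the bound becomes $\ll H_1H_2\sum_{q\leq Q}q\,d_k(q)^{2C+1}$. Since $Q=(\log X)^{100k\log k}$ is a fixed power of $\log X$ and $d_k(q)\ll q^{\eps}$, the sum $\sum_{q\leq Q}q\,d_k(q)^{2C+1}\ll Q^{2+\eps}$ is itself only a power of $\log X$; but this is still too large to be absorbed into $H_1H_2$ alone.

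\textbf{The main obstacle and its resolution.} The genuine difficulty is that one cannot afford a loss of a power of $Q$ here, since the target bound is $H_1H_2F(x)G(x)$ with no room for $Q^{2+\eps}$. The resolution is that one must not sum trivially over $q$ and $a$ at the level of $|v_x(\beta)|^2$; instead, one should first use the \emph{disjointness} of the major arcs $\major(a,q)$ — which is exactly the point stressed in the text after \eqref{Ups} — to recognise that $\int_\major F(\alpha,x)\overline{G(\alpha,x)}e(h\alpha)\,\rd\alpha$ is a single integral over a disjoint union, and then bound it via Cauchy--Schwarz in $h$ (or directly) against $\int_\major |F(\alpha,x)|\,|G(\alpha,x)|\,\rd\alpha$ with the $h$-dependence contributing only $H_2$. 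The arithmetic factor $\sum_q d_k(q)^{2C+1}q^{\eps}\cdot q^{-1}$ — note the crucial $\varphi(q_1)^{-1}q_0^{-1}\ll q^{-1+\eps}$ decay from \eqref{Falp}, squared gives $q^{-2+\eps}$, which comfortably beats the $\varphi(q)\leq q$ coming from the number of residues $a$ — then converges, so that $\sum_{q\leq Q}(\text{number of }a)\cdot q^{-2+\eps}d_k(q)^{2C+1}\ll 1$. Combined with $\int_{|\beta|\leq Q^2/H_1}|v_x(\beta)|^2\,\rd\beta\ll H_1$ from Lemma \ref{lem4.1}, this yields the claimed bound $\ll H_1H_2F(x)G(x)$. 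The only delicate bookkeeping is keeping track of the $q^{-1+\eps}$ arithmetic decay through the product $F(\alpha,x)\overline{G(\alpha,x)}$, which is what makes the $q$-sum converge; once that is in hand the estimate is immediate.
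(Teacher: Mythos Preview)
Your resolution contains a genuine error. You claim that $\sum_q \varphi(q)\cdot q^{-2+\eps}d_k(q)^{2C+1}$ converges, but $\varphi(q)\cdot q^{-2+\eps}\gg q^{-1}$ on average, and $\sum_{q\geq 1}q^{-1+\eps}$ diverges for every $\eps\geq 0$. Once you bound $\bigl|\int_\major F\overline{G}e(h\alpha)\,\rd\alpha\bigr|\leq\int_\major|F||G|\,\rd\alpha$ (an $h$-independent quantity) and multiply by $H_2$, the best you can extract is
\[
\sum_{|h|\leq H_2}\Bigl|\int_\major F(\alpha,x)\overline{G(\alpha,x)}e(h\alpha)\,\rd\alpha\Bigr|\ll H_1H_2F(x)G(x)\sum_{q\leq Q}q^{-1+\eps},
\]
and the last sum is $\gg\log Q\asymp\log\log X$, not $O(1)$. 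This loss cannot be absorbed: in the application inside Lemma~\ref{lem4.5} the functions $F,G$ are chosen so that $F(x)G(x)$ is only barely $o\bigl((\log X)^{k+l-2}\bigr)$, and an extra $\log\log X$ factor destroys that.

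The missing idea is that one must \emph{not} bound the sum over reduced residues $a\pmod q$ trivially by $\varphi(q)$. On the arc $\major(a,q)$ the product $F(\alpha,x)\overline{G(\alpha,x)}$ depends on $a$ only through the factor $e(h\alpha)=e(ah/q)e(h\beta)$, so performing the $a$-sum first produces the Ramanujan sum $c_q(h)=\sum_{(a,q)=1}e(ah/q)$. The bound $|c_q(h)|\leq\sum_{d\mid(q,h)}d$ then gives
\[
\sum_{q\leq Q}|c_q(h)|\,q^{\eps-2}\ll\sum_{d\mid h}d\sum_{m\leq Q/d}(dm)^{\eps-2}\ll\sum_{d\mid h}d^{\eps-1},
\]
and summing this over $|h|\leq H_2$ yields $\sum_{d\leq H_2}d^{\eps-1}\lfloor H_2/d\rfloor\ll H_2$ with no loss in $Q$. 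This exploitation of the Ramanujan sum, rather than the trivial bound on the $a$-sum, is exactly the route the paper takes.
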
 
\begin{proof}
Let $a,q$ be as above. For each $\alpha\in \major(a,q)$,  write $\beta=\alpha-a/q$. Recalling (\ref{major-minor}) and the definitions of $F(\alpha,x)$ and $G(\alpha,x)$ we see that
\begin{align*}&\int_\major F(\alpha,x) \overline{G(\alpha,x)}e(h\alpha)\rd\alpha 
\\
&=\sum_{q\leq Q} \sum_{\substack{a=1\\ (a,q)=1}}^{q}e(ah/q)\Big(\sum_{q_{0}q_{1}=q}\frac{\mu(q_{1})}{q_{0}\varphi(q_{1})}f_{q_{0},q_{1}}(x)\Big)\Big(\sum_{q_{0}q_{1}=q}\frac{\mu(q_{1})}{q_{0}\varphi(q_{1})}g_{q_{0},q_{1}}(x)\Big)I_{q}(H_{1}),
\end{align*}
where 
\begin{equation}\label{Iq}I_{q}(H_{1})=\int_{\lvert\beta\rvert\leq \frac{Q^{2}}{qH_{1}}}\lvert v_{x}(\beta)\rvert^{2}e(\beta h)\rd\beta.\end{equation}
We shall next employ Lemma \ref{lem4.1} to estimate the above integral, and utilise the assumptions on $f_{q_0,q_1}(x)$ and $g_{q_0,q_1}(x)$ and the fact that the sum over $a$ is a Ramanujan sum to obtain
\begin{align*}\int_\major& F(\alpha,x) \overline{G(\alpha,x)}e(h\alpha)\rd\alpha
\\
&\ll \sum_{q\leq Q}\lvert c_{q}(h)\rvert \Big(\sum_{q_{0}q_{1}=q}\frac{\mu(q_{1})^{2}}{q_{0}\varphi(q_{1})}d_{k}(q)^{C}\Big)^{2} F(x)G(x) \int_{|\beta|\leq\frac{Q^2}{qH_1}} \Bigbrac{\frac{H_1}{1+H_1|\beta|}}^2\rd\beta\\
&\ll H_{1}F(x)G(x)\sum_{q\leq Q}\lvert c_{q}(h)\rvert q^{\varepsilon-2}.
\end{align*}
We pause the exposition to note that the above Ramanujan sum satisfies
$$
c_{q}(h)=\sum_{d|(q,h)}\mu\Big(\frac{q}{d}\Big)d\leq \sum_{d|(q,h)}d.
$$
Consequently, inserting the preceding inequality into the above estimate yields
\begin{align}\label{eq10}
\int_\major  &F(\alpha,x) \overline{G(\alpha,x)}e(h\alpha)\rd\alpha\ll H_{1}F(x)G(x)\sum_{q\leq Q}q^{\varepsilon-2} \sum_{d|(q,h)}d\nonumber
\\
&\ll H_{1}F(x)G(x)\sum_{d| h}\sum_{m\leq Q/d}\frac{d}{(dm)^{19/10}}\ll H_{1}F(x)G(x)\sum_{d\mid h}\frac{1}{d^{9/10}}.
\end{align}
We conclude the discussion by summing the preceding equation over $h$ to obtain
\begin{align*}\sum_{\lvert h\rvert\leq H_{2}}\Big\lvert\int_\major &F(\alpha,x) \overline{G(\alpha,x)}e(h\alpha)\rd\alpha\Big\rvert\ll H_{1}F(x)G(x)\sum_{\lvert h\rvert\leq H_{2}}\sum_{d\mid h}\frac{1}{d^{9/10}}
\\
&\ll H_{1}F(x)G(x)\sum_{d\leq H_{2}}\frac{1}{d^{9/10}}\Bigg\lfloor \frac{H_{2}}{d}\Bigg\rfloor \ll H_{1}H_{2}F(x)G(x),
\end{align*}as desired.
\end{proof}

In order to make progress in the proof it seems convenient to introduce for $ q_{0},q_{1}\in\mathbb{N}$ with $q_{0}q_{1}=q$ the meromorphic function 
$$F_{k,q_{0},q_{1}}(s)=\sum_{\substack{n=1\\ (n,q_{1})=1}}\frac{d_{k}(q_{0}n)}{n^{s}}$$ and $$P_{k,q_{0},q_{1}}(x)=\text{Res}_{s=1} \frac{x^{s}F_{k,q_{0},q_{1}}(s)}{s},$$ the latter being a polynomial of degree $k-1$ in $\log x$ (see \cite[Proposition 4.2]{MRTI}). A succinct computation employing Euler products reveals that
\begin{align}\label{eq4.20}
P_{k,q_{0},q_{1}}(x)=a_{k,q_{0},q_{1}}(\log x)^{k-1}+O\big(d_{k}(q)^{C_{k}}(\log x)^{k-2}\big),
\end{align}
where 
\begin{align*}a_{k,q_{0},q_{1}}=\frac{1}{(k-1)!}\prod_{p| q_{0} q_{1}}\Big(1-\frac{1}{p}\Big)^{k}\prod_{\substack{p| q_{0}\\ p^{i_{p}}|| q_{0}}}\Big(\sum_{j=0}^{\infty}\frac{d_{k}(p^{i_{p}+j})}{p^{j}}\Big)
\end{align*} and $C_{k}>0$ is some constant. We record for further purposes the bound
\begin{equation}\label{eqak}a_{k,q_{0},q_{1}}\leq  d_{k}(q_{0})\prod_{p| q_{0} q_{1}}\Big(1-\frac{1}{p}\Big)^{k}\prod_{p| q_{0} }\Big(1-\frac{1}{p}\Big)^{-k}\leq d_{k}(q_{0}),
\end{equation} an ensuing consequence being that
\begin{equation}\label{eqaki}P_{k,q_{0},q_{1}}(x)\ll  d_{k}(q_{0})(\log x)^{k-1}+d_{k}(q)^{C_{k}}(\log x)^{k-2}.
\end{equation}

We next recall (\ref{Ups}) and observe whenever $\alpha\in\major$ that by employing a telescoping argument one gets
\begin{align}\label{b-234}
\Upsilon_{k}(\alpha,x)=B_{2,k}(\alpha,x)+B_{3,k}(\alpha,x)+B_{4,k}(\alpha,x),
\end{align}
where the above terms are defined by means of the expressions
$$B_{2,k}(\alpha,x)=v_{x}(\beta)\sum_{q_{0}q_{1}=q}\frac{\mu(q_{1})}{q_{0}\varphi(q_{1})}P_{k,q_{0},q_{1}}( x),$$
$$B_{3,k}(\alpha,x)=\frac{v_{x}(\beta)}{Y}\sum_{q_{0}q_{1}=q}\frac{\mu(q_{1})}{q_{0}\varphi(q_{1})}\sum_{x/q_{0}< n\leq x/q_{0}+Y}\chi_{0}(n)\big(f_{k}(q_{0}n)-d_{k}(q_{0}n)\big)$$ and 
$$B_{4,k}(\alpha,x)=v_{x}(\beta)\sum_{q_{0}q_{1}=q}\frac{\mu(q_{1})}{q_{0}\varphi(q_{1})}\Big(\frac{1}{Y}\sum_{x/q_{0}< n\leq x/q_{0}+Y}\chi_{0}(n)d_{k}(q_{0}n)-P_{k,q_{0},q_{1}}( x)\Big).$$
Equipped with the preceding definitions we finally introduce the singular series
\begin{equation}\label{chk}
c_{h,k,l}=\sum_{q=1}^{\infty}c_{q}(h)\Big(\sum_{q_{0}q_{1}=q}\frac{\mu(q_{1})a_{k,q_{0},q_{1}}}{q_{0}\varphi(q_{1})}\Big)\cdot \Big(\sum_{q_{0}q_{1}=q}\frac{\mu(q_{1})a_{l,q_{0},q_{1}}}{q_{0}\varphi(q_{1})}\Big).
\end{equation}
The reader may observe that the convergence of the above series is justified by the estimate
\begin{equation*}\lvert c_{q}(h)\rvert\Big(\sum_{q_{0}q_{1}=q}\frac{\mu(q_{1})a_{k,q_{0},q_{1}}}{q_{0}\varphi(q_{1})}\Big)\cdot \Big(\sum_{q_{0}q_{1}=q}\frac{\mu(q_{1})a_{l,q_{0},q_{1}}}{q_{0}\varphi(q_{1})}\Big)\ll q^{\varepsilon-2}\sum_{d|(q,h)}d,\end{equation*} the latter stemming from (\ref{eqak}), in conjunction with the argument that leads to (\ref{eq10}).

\begin{lemma} [Major-arc analysis]\label{lem4.5}
Let $k\geq l\geq 2$. Let $f_k:[X,2X]\to \R$ be as in (\ref{f_k}) and $H_1\geq(\log X)^{\Phi(X)}$, where $\Phi(X)\to \infty$ is as in Theorem \ref{thm1.1}. Let $H_{2}\leq H_{1}^{1-\varepsilon_{1}}$ for small fixed $\varepsilon_{1}>0$. Then one has
\begin{align*}
\sum_{\lvert h\rvert \leq H_{2}}\int_{X}^{2X}\Big\lvert\int_\major S_{f_{k}}(\alpha,x) \overline{S_{f_{l}}(\alpha,x)}e(h\alpha)\rd\alpha -&c_{h,k,l}H_1(\log X)^{k+l-2}\Big\rvert \rd x
\\
&=o\big( XH_1H_{2}(\log X)^{k+l-2}\big).
\end{align*}
\end{lemma}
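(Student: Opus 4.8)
The plan is to start from Lemma~\ref{lem4.3}, which for each integer $h$ with $|h|\le X^{1-\eps}$ replaces $S_{f_k}(\alpha,x)\overline{S_{f_l}(\alpha,x)}$ by $\Upsilon_k(\alpha,x)\overline{\Upsilon_l(\alpha,x)}$ on $\major$ at the cost of $O\bigl(XH_1(\log X)^{k+l-2}/Q\bigr)$ after integrating in $x\in[X,2X]$. (Although stated for $h\in[1,X^{1-\eps}]$, the proof of Lemma~\ref{lem4.3}, and every step below, applies verbatim to an arbitrary integer $h$; for $h<0$ one may alternatively conjugate and interchange the roles of $k$ and $l$, which is legitimate since the argument below does not use $k\ge l$.) Summing this over $|h|\le H_2$ costs only a factor $H_2$, and since $Q=(\log X)^{100k\log k}\to\infty$ this contribution is $o\bigl(XH_1H_2(\log X)^{k+l-2}\bigr)$; so it suffices to estimate
\[
\sum_{|h|\le H_2}\int_X^{2X}\Bigabs{\int_\major \Upsilon_k(\alpha,x)\overline{\Upsilon_l(\alpha,x)}\,e(h\alpha)\rd\alpha-c_{h,k,l}H_1(\log X)^{k+l-2}}\rd x.
\]

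Into this I would insert the telescoping identity (\ref{b-234}), $\Upsilon_j=B_{2,j}+B_{3,j}+B_{4,j}$, and split $B_{2,j}$ by means of (\ref{eq4.20}) into its leading part, which on $\major(a,q)$ equals $v_x(\beta)(\log x)^{j-1}\mathfrak S_j(q)$ with $\mathfrak S_j(q):=\sum_{q_0q_1=q}\mu(q_1)a_{j,q_0,q_1}/(q_0\varphi(q_1))$, plus a remainder of size $O\bigl(d_k(q)^{C_k}(\log x)^{j-2}|v_x(\beta)|\bigr)$. Multiplying out $\Upsilon_k\overline{\Upsilon_l}$, the genuine main term comes only from the product of the two leading parts of $B_{2,k}$ and $B_{2,l}$; every remaining product has a factor equal to some $B_{3,\cdot}$ (carrying the weight $f_{\cdot}-d_{\cdot}$), to some $B_{4,\cdot}$ (carrying the difference between a long average of $d_{\cdot}$ along a progression and its polynomial main term), or to the remainder of some $B_{2,\cdot}$. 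For the main term, on $\major(a,q)$ this product equals $|v_x(\beta)|^2(\log x)^{k+l-2}\mathfrak S_k(q)\mathfrak S_l(q)$; integrating against $e(h\alpha)$ over $\major$ and carrying out the summation over $a$ (a Ramanujan sum) yields $(\log x)^{k+l-2}\sum_{q\le Q}c_q(h)\mathfrak S_k(q)\mathfrak S_l(q)\,I_q(H_1)$ with $I_q(H_1)$ as in (\ref{Iq}). Writing $|v_x(\beta)|^2=\sum_{|r|<H_x}(H_x-|r|)e(\beta r)$ one gets $\int_\T|v_x(\beta)|^2e(\beta h)\rd\beta=\max(0,H_x-|h|)=H_1-|h|+O(1)$, using $H_x=H_1+O(1)$ and $|h|\le H_2=o(H_1)$, while Lemma~\ref{lem4.1} bounds the integral over the complement of the arc by $O(qH_1/Q^2)$; hence $I_q(H_1)=H_1-|h|+O(1+qH_1/Q^2)$. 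Substituting this, extending the $q$-sum to infinity (the tail is harmless since $\mathfrak S_k(q)\mathfrak S_l(q)\ll q^{\eps-2}$ by (\ref{eqak}), together with the bound on sums of Ramanujan sums used to derive (\ref{eq10})), and replacing $(\log x)^{k+l-2}$ by $(\log X)^{k+l-2}$, reproduces $c_{h,k,l}H_1(\log X)^{k+l-2}$ as in (\ref{chk}); the errors — from the $-|h|$ and $O(1+qH_1/Q^2)$ corrections, from truncating at $q=Q$, and from $\log x\mapsto\log X$ — are each $o\bigl(XH_1H_2(\log X)^{k+l-2}\bigr)$, the bound $H_2=o(H_1)$ being what disposes of the $-|h|$ term.

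It remains to treat the error products. In each of them the integral in $\beta$ over $\major(a,q)$ is at most $\int_\T|v_x(\beta)|^2\rd\beta=H_x\ll H_1$, the $a$-sum again yields $c_q(h)$, one factor is controlled pointwise in $x$ by $d(q)^{O(1)}q^{\eps-1}(\log X)^{O(1)}$ (via Shiu's bound and (\ref{eqaki})), and the other is an arithmetic function of $x$ whose average over $[X,2X]$ is small: for a $B_{3,\cdot}$-factor it is $o\bigl(X(\log X)^{k-1}\bigr)$, obtained after switching the order of summation and invoking, uniformly for $q_0\le Q$, the mean-value estimate for $d_k-f_k$ underlying Lemma~\ref{contribution-dk-fk}; for a $B_{4,\cdot}$-factor it is $O\bigl(X^{1-\delta_k}(\log X)^{O(k\log k)}\bigr)=o(X)$ for a suitable $\delta_k>0$, since the remainder in the classical formula for $\sum_{n\le t}d_k(n)$ along a progression to a modulus $\le Q$ is power-saving (so no averaging in $x$ is needed); and for the remainder of $B_{2,\cdot}$ the factor is already smaller by a power of $\log X$, for which Lemma~\ref{lem4.4} applies directly. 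In each case, performing the $h$-sum first (exactly as in the derivation of (\ref{eq10}), which bounds $\sum_{|h|\le H_2}$ of the relevant Ramanujan-sum weights by $O(H_2)$) and then $\sum_{q\le Q}q^{\eps-2}d(q)^{O(1)}=O(1)$, with the integration $\int_X^{2X}\rd x$ supplying a factor $X$, bounds each error product by $o\bigl(XH_1H_2(\log X)^{k+l-2}\bigr)$, which completes the proof. The step I expect to be the crux is the uniform-in-$q_0$ mean-value input for the $B_{3,\cdot}$-pieces — verifying that confining $d_k$ to $S_k$ genuinely costs a factor tending to zero even after the small modulus $q_0\le Q=(\log X)^{100k\log k}$ is extracted and one averages over $x$ — together with the bookkeeping that keeps every polylogarithmic loss (powers of $d_k(q)$, the remainder in (\ref{eq4.20}), the truncation of the singular series) strictly below the available saving $Q^{-1}$ or $(\log X)^{-1}$, which is precisely why the $h$-sum must be evaluated before any cruder estimate is applied.
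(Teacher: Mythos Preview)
Your approach is essentially that of the paper: reduce to $\Upsilon_k\overline{\Upsilon_l}$ via Lemma~\ref{lem4.3}, telescope using (\ref{b-234}), extract the main term from the leading parts of $B_{2,k},B_{2,l}$, and show all remaining products are negligible via the Lemma~\ref{lem4.4} mechanism. Your organisation differs slightly (you split $B_2$ into leading-plus-remainder before treating cross terms, whereas the paper keeps $B_2$ intact there and only isolates the leading coefficient afterwards), but the content is the same.

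There is one genuine error in the reasoning. You claim the $B_{4,\cdot}$-factor is $O(X^{1-\delta_k}(\log X)^{O(k\log k)})$ on average, appealing to a power-saving remainder for $\sum_{n\le t}d_k(n)$ in progressions. This is not correct: even granting a power-saving error in the count, the main term it produces is
\[
Y^{-1}\bigl[(x/q_0+Y)P_{k,q_0,q_1}(x/q_0+Y)-(x/q_0)P_{k,q_0,q_1}(x/q_0)\bigr],
\]
and this differs from $P_{k,q_0,q_1}(x)$ by a quantity of genuine size $\asymp(\log X)^{k-2}\log\log X$, coming from the shift $\log(x/q_0)\to\log x$ (since $\log q_0\ll\log\log X$) together with the lower-order terms generated when one differentiates $u\mapsto uP(u)$. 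This is the paper's (\ref{eq14}); it is not power-saving and is not $o(1)$. Fortunately this does not break your argument: the pointwise bound $f^{(4)}_{q_0,q_1}(x)\ll(\log X)^{k-2}\log\log X$ still saves a factor $(\log X)/\log\log X$ against $(\log X)^{k-1}$, which is exactly what the Lemma~\ref{lem4.4} template needs. The fix is simply to replace your power-saving claim by this pointwise estimate.

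A smaller point: for $B_{3,\cdot}$ you phrase the input as an average over $x\in[X,2X]$, but the long sum defining $f^{(3)}_{q_0,q_1}(x)$ has length $Y=X/\log^A X$, so a Shiu-type bound on $|d_k-f_k|$ already gives the pointwise estimate $f^{(3)}_{q_0,q_1}(x)=o((\log X)^{k-1})$ uniformly in $q_0\le Q$ (this is (\ref{eq13})); no averaging in $x$ is needed, and this is what makes Lemma~\ref{lem4.4} apply cleanly. With these two corrections your proof matches the paper's.
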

\begin{proof}
In view of Lemma \ref{lem4.3} it suffices to show the analogue of the preceding equation with $\Upsilon_{k}(\alpha,x)\overline{\Upsilon_{l}(\alpha,x)}$ replacing $S_{f_{k}}(\alpha,x) \overline{S_{f_{l}}(\alpha,x)}$. In light of the identity (\ref{b-234}) we observe that
\begin{align}\label{eq4.6}\int_\major &\big(\Upsilon_{k}(\alpha,x) \overline{\Upsilon_{l}(\alpha,x)}-B_{2,k}(\alpha,x)\overline{B_{2,l}(\alpha,x)}\big)e(h\alpha)\rd\alpha
\\
=& \int_\major \Big(B_{2,k}(\alpha,x) \sum_{i=3}^{4}\overline{B_{i,l}(\alpha,x)}+B_{3,k}(\alpha,x)\sum_{i=2}^{4}\overline{B_{i,l}(\alpha,x)}\Big)e(h\alpha) \rd\alpha\nonumber
\\
&+\int_{\major}B_{4,k}(\alpha,x)\sum_{i=2}^{4}\overline{B_{i,l}(\alpha,x)}e(h\alpha)\rd\alpha. \nonumber
\end{align}

We first note that a simpler variant of the proof of \cite[Lemma 3.3]{MRTII} (replacing the use of  Henriot's bound by Shiu's bound) combined with the bound $\lvert f_{k}(q_{0}n)-d_{k}(q_{0}n)\rvert\leq d_{k}(q_{0})\lvert f_{k}(n)-d_{k}(n)\rvert$, entails
\begin{equation}\label{eq13}
\sum_{x/q_{0}< n\leq x/q_{0}+Y}\chi_{0}(n)\big(f_{k}(q_{0}n)-d_{k}(q_{0}n)\big)=o\big(Y(\log X)^{k-1}\big).
\end{equation}

Moreover, the first equation at the top of page 315 of \cite{MRTII} permits one to deduce whenever $x\in [X,2X]$ and $q_{0}\leq Q$ via the fundamental theorem of calculus that 
\begin{align}\label{eq12}\sum_{x/q_{0}< n\leq x/q_{0}+Y}\chi_{0}(n)d_{k}(q_{0}n)=& \big(x/q_{0}+Y\big)P_{k,q_{0},q_{1}}\big(x/q_{0}+Y\big)-(x/q_{0})P_{k,q_{0},q_{1}}\big(x/q_{0}\big)\nonumber
\\
&+O(X\log^{-A'}X),
\end{align}
where $A'>0$ is any arbitrary large constant. One may deduce from (\ref{eq4.20}) and the bound $q_{0}\leq (\log x)^{100k\log k}$ that
$$\Big\lvert P_{k,q_{0},q_{1}}\big(x/q_{0}+Y\big)-P_{k,q_{0},q_{1}}\big(x/q_{0}\big)\Big\rvert\ll \frac{q_{0}Y}{x}(\log x)^{k-2}$$ and
$$ P_{k,q_{0},q_{1}}\big(x/q_{0}\big)=P_{k,q_{0},q_{1}}(x)+O\big((\log x)^{k-2}\log\log x\big).$$ Combining the preceding expressions and (\ref{eq12}) thereby yields
\begin{equation}\label{eq14}\sum_{x/q_{0}< n\leq x/q_{0}+Y}\chi_{0}(n)d_{k}(q_{0}n)= YP_{k,q_{0},q_{1}}(x)+O\big(Y(\log X)^{k-2}\log \log X\big).\end{equation}

We shall next prepare the ground for the application of Lemma \ref{lem4.4}. To such an end it seems first worth noting that whenever $q\leq Q$, the estimate (\ref{eq4.20}) in conjunction with (\ref{eqak}) entails \begin{equation}\label{eq15}P_{k,q_{0},q_1}(\log x)\ll (\log x)^{k-1}d_{k}(q_0).\end{equation} One then writes $B_{2,k}(\alpha,x)$, $B_{3,k}(\alpha,x)$ and $B_{4,k}(\alpha,x)$ as in (\ref{Falp}) for the choices $f_{q_0,q_1}^{(2)}(x)=P_{k,q_{0},q_{1}}( x)$,
$$
f_{q_0,q_1}^{(3)}(x)=\frac{1}{Y}\sum_{x/q_{0}< n\leq x/q_{0}+Y}\chi_{0}(n)\big(f_{k}(q_{0}n)-d_{k}(q_{0}n)\big)
$$
 and 
 $$ 
 f_{q_0,q_1}^{(4)}(x)=\frac{1}{Y}\sum_{x/q_{0}< n\leq x/q_{0}+Y}\chi_{0}(n)d_{k}(q_{0}n)-P_{k,q_{0},q_{1}}( x)
 $$respectively. We observe for further convenience that by (\ref{eq13}), (\ref{eq14}) and (\ref{eq15}) one has $$f_{q_0,q_1}^{(i)}(x)\ll d_{k}(q)^{C_{k}}F_{i}(x),\ \ \ \ i=2,3,4$$ for some large enough constant $C_{k}>0$, where $F_{2}(x)=(\log x)^{k-1}$, $F_{3}(x)=o\big((\log x)^{k-1}\big)$ and $F_{4}(x)=(\log x)^{k-2}\log\log x$. By the preceding discussion it then transpires that an application of Lemma \ref{lem4.4} delivers
$$
\sum_{\lvert h\rvert\leq H_{2}}\Big\lvert\int_\major  B_{i,k}(\alpha,x)B_{j,k}(\alpha,x) e(\alpha h) \rd \alpha\Big\rvert=o(H_{1}H_{2}(\log X)^{k+l-2})
$$
 for each $2\leq i,j\leq 4$ with $(i,j)\neq (2,2)$.
Consequently, employing the above bound to each of the summands in (\ref{eq4.6}) yields
\begin{align}\label{eq4.15}\sum_{\lvert h\rvert\leq H_{2}}\Big\lvert\int_\major \big(\Upsilon_{k}(\alpha,x) \overline{\Upsilon_{l}(\alpha,x)}-&B_{2,k}(\alpha,x)\overline{B_{2,l}(\alpha,x)}\big)e(h\alpha)\rd\alpha\Big\rvert\nonumber
\\
&=o\big(H_{1}H_{2}(\log X)^{k+l-2}\big).
\end{align} 

In order to conclude the proof it therefore suffices to evaluate the integral of $B_{2,k}(\alpha,x)\overline{B_{2,l}(\alpha,x)}$. To such an end we denote $$M_{k,q}(x)=\sum_{q_{0}q_{1}=q}\frac{\mu(q_{1})}{q_{0}\varphi(q_{1})}P_{k,q_{0},q_{1}}( x),$$ the above definition entailing the identity
\begin{align}\label{maj}&\int_{\major} B_{2,k}(\alpha,x)\overline{B_{2,l}(\alpha,x)}e(h\alpha)\rd\alpha=\sum_{q\leq Q}c_{q}(h)M_{k,q}(x)M_{l,q}(x)I_{q}(H_{1}),
\end{align} 
where $I_{q}(H_{1})$ was defined in (\ref{Iq}). In order to approximate the preceding expression by the singular series in (\ref{chk}) it is appropiate noting beforehand by using the bound (\ref{eqaki}) that
\begin{align}\label{eq4.22}&\sum_{q=1}^{\infty}q^{1/2}\lvert c_{q}(h)\rvert\Big(\sum_{q_{0}q_{1}=q}\frac{\mu(q_{1})^{2}}{q_{0}\varphi(q_{1})}\lvert P_{k,q_{0},q_{1}}( x)\rvert\Big)\Big( \sum_{q_{0}q_{1}=q}\frac{\mu(q_{1})^{2}}{q_{0}\varphi(q_{1})}\lvert P_{l,q_{0},q_{1}}( x)\rvert \Big)   \nonumber
\\
&\ll (\log x)^{k+l-2}\sum_{q=1}^{\infty}q^{\varepsilon-3/2} \sum_{d|(q,h)}d\ll (\log x)^{k+l-2}\sum_{d| h}\sum_{m=1}^{\infty}d(dm)^{-7/5}\nonumber
\\
&\ll (\log x)^{k+l-2}\sum_{d\mid h}\frac{1}{d^{2/5}}.
\end{align}
It therefore follows from the above estimate in conjunction with Lemma \ref{lem4.1} that
\begin{align*}\sum_{\lvert h\rvert\leq H_{2}}&\sum_{q\leq Q}\lvert c_{q}(h)\rvert \lvert M_{k,q}(x)M_{l,q}(x)\rvert \int_{Q^{2}/qH_{1}}^{1/2}\lvert v_x(\beta)\rvert^{2}\rd\beta
\\
&\ll Q^{-3/2}H_{1}\sum_{\lvert h\rvert\leq H_{2}}\sum_{q\leq Q}q^{1/2}\lvert c_{q}(h)\rvert \lvert M_{k,q}(x)M_{l,q}(x)\rvert
\\
&  \ll Q^{-3/2}H_{1}(\log x)^{k+l-2}\sum_{\lvert h\rvert\leq H_{2}}\sum_{d\mid h}\frac{1}{d^{2/5}}\ll Q^{-3/2}H_{1}H_{2}(\log x)^{k+l-2} .
\end{align*} 

We shall next bound the contribution of the tail of the sum in a similar manner. Indeed, the estimate (\ref{eq4.22}) combined with the aforementioned lemma yields
\begin{align*}\sum_{\lvert h\rvert\leq H_{2}}&\sum_{q> Q}\lvert c_{q}(h)\rvert \lvert M_{k,q}(x)M_{l,q}(x)\rvert \int_{-1/2}^{1/2}\lvert v_x(\beta)\rvert^{2}e(\beta h)\rd\beta
\\
&\ll Q^{-1/2}H_{1}\sum_{\lvert h\rvert\leq H_{2}}\sum_{q> Q}q^{1/2}\lvert c_{q}(h)\rvert \lvert M_{k,q}(x)M_{l,q}(x)\rvert  \ll Q^{-1/2}H_{1}H_{2}(\log x)^{k+l-2}
\end{align*} and 
\begin{align}\label{eq4.14}
\sum_{\lvert h\rvert\leq H_{2}}\sum_{q=1}^{\infty}\lvert c_{q}(h)\rvert \lvert M_{k,q}(x)M_{l,q}(x)\rvert   &\ll \sum_{\lvert h\rvert\leq H_{2}}\sum_{q=1}^{\infty}q^{1/2}\lvert c_{q}(h)\rvert \lvert M_{k,q}(x)M_{l,q}(x)\rvert   \nonumber
\\
&\ll H_{2}(\log x)^{k+l-2}.
\end{align}
Consequently, the preceding discussion and (\ref{maj}) permits one to deduce that
\begin{align}\label{eq11}&\sum_{\lvert h\rvert\leq H_{2}}\int_{\major} B_{2,k}(\alpha,x)\overline{B_{2,l}(\alpha,x)}e(h\alpha)\rd\alpha 
\\
&=\sum_{\lvert h\rvert\leq H_{2}}\sum_{q=1}^{\infty}c_{q}(h)M_{k,q}(x)M_{l,q}(x)\int_{-\frac{1}{2}}^{\frac{1}{2}}\lvert v_x(\beta)\rvert^{2}e(\beta h)\rd\beta+O\Big(Q^{-\frac{1}{2}}H_{1}H_{2}(\log x)^{k+l-2}\Big).\nonumber
\end{align}

In order to evaluate the preceding series it is worth recalling (\ref{eq4.20}), (\ref{eqak}) and (\ref{chk}) to note that
\begin{align}\label{eq4.23}\sum_{q=1}^{\infty}&c_{q}(h)M_{k,q}(x)M_{l,q}(x)=c_{h,k,l}(\log x)^{l+k-2}\nonumber
\\
&+O\Big((\log x)^{k+l-3}\sum_{q=1}^{\infty}\lvert c_{q}(h)\rvert \Big( \sum_{q_{0}q_{1}=q}\frac{\mu(q_{1})^{2}d_{k}(q)^{C_{k}}}{q_{0}\varphi(q_{1})}\Big)\Big( \sum_{q_{0}q_{1}=q}\frac{\mu(q_{1})^{2}d_{l}(q)^{C_{l}}}{q_{0}\varphi(q_{1})}\Big)\Big).
\end{align}
Employing the same argument as in (\ref{eq4.22}) we deduce that
\begin{align*}
\sum_{\lvert h\rvert\leq H_{2}}\sum_{q=1}^{\infty}\lvert c_{q}(h)\rvert& \Big( \sum_{q_{0}q_{1}=q}\frac{\mu(q_{1})^{2}d_{k}(q)^{C_{k}}}{q_{0}\varphi(q_{1})}\Big)\Big( \sum_{q_{0}q_{1}=q}\frac{\mu(q_{1})^{2}d_{l}(q)^{C_{l}}}{q_{0}\varphi(q_{1})}\Big)
\\
&\ll\sum_{\lvert h\rvert\leq H_{2}}\sum_{q=1}^{\infty} q^{\varepsilon-2}\sum_{d|(q,h)}d\ll \sum_{\lvert h\rvert\leq H_{2}}\sum_{d\mid h}\frac{1}{d^{9/10}}  \ll H_{2}.
\end{align*}
Therefore, the above estimate in conjunction with (\ref{eq4.23}) delivers
\begin{align}\label{eq4.24}\sum_{\lvert h\rvert\leq H_{2}}\Big\lvert \sum_{q=1}^{\infty}&c_{q}(h)M_{k,q}(x)M_{l,q}(x)-c_{h,k,l}(\log x)^{l+k-2}\Big\rvert\ll H_{2}(\log x)^{k+l-3}.
\end{align}

We shift the reader's attention to the integral in (\ref{eq11}) and use orthogonality to obtain whenever $\lvert h\rvert\leq H_{2}$ and $x\in \R $ that
$$
\int_{-1/2}^{1/2}\lvert v_x(\beta)\rvert^{2}e(\beta h)\rd\beta=\sum_{\substack{m_{1}-m_{2}=h\\ 0\leq m_{1}\leq H_{1}\\ 0\leq m_{2}\leq H_{1}}}1+O(1)=H_{1}+O(H_{2}).
$$
Consequently, inserting the above evaluation into (\ref{eq11}) and employing both (\ref{eq4.14}) and the fact that $H_{2}=O( H_{1}Q^{-1})$ permit one to deduce
\begin{align*}&\sum_{\lvert h\rvert\leq H_{2}}\Big\lvert\int_{\major} B_{2,k}(\alpha,x)\overline{B_{2,l}(\alpha,x)}e(h\alpha)\rd\alpha-H_{1}\sum_{q=1}^{\infty}c_{q}(h)M_{k,q}(x)M_{l,q}(x)\Big\rvert\nonumber
\\
&\ll H_{2}\sum_{\lvert h\rvert\leq H_{2}}\sum_{q=1}^{\infty}\lvert c_{q}(h)\rvert \lvert M_{k,q}(x)M_{l,q}(x)\rvert+Q^{-1/2}H_{1}H_{2}(\log x)^{k+l-2}\nonumber
\\
&\ll Q^{-1/2}H_{1}H_{2}(\log x)^{k+l-2}.
\end{align*}
Therefore, combining the previous equation with (\ref{eq4.14}) and   (\ref{eq4.24}) entails via the triangle inequality that
\begin{align*}&\sum_{\lvert h\rvert\leq H_{2}}\Big\lvert\int_{\major} B_{2,k}(\alpha,x)\overline{B_{2,l}(\alpha,x)}e(h\alpha)\rd\alpha-c_{h,k,l}H_{1}(\log x)^{l+k-2} \Big\rvert
\\
&\ll Q^{-\frac{1}{2}}H_{1}H_{2}(\log x)^{k+l-2}+H_{1}\sum_{\lvert h\rvert\leq H_{2}}\Big\lvert c_{h,k,l}(\log x)^{l+k-2}-\sum_{q=1}^{\infty}c_{q}(h)M_{k,q}(x)M_{l,q}(x)\Big\rvert
\\
&\ll   Q^{-1/2}H_{1}H_{2}(\log x)^{k+l-2}+H_{1}H_{2}(\log x)^{k+l-3}.
\end{align*}
The lemma then follows by combining the triangle inequality with Lemma \ref{lem4.3}, equation (\ref{eq4.15}) and the preceding estimate.

\end{proof}


\section{Minor arcs analysis}\label{sec5}

The minor-arc discussion in this section involves estimates for the second moment average of an exponential sum over a very narrow neighborhood centered at some frequency. Prior to \cite{MRTI}, the standard approach entailed expanding the square and using combinatorial Type I and II sums to evaluate these exponential sums. The articles \cite{MRTI, MRTII} exhibited instead a method using harmonic analysis to transfer the question to the understanding of moments of Dirichlet polynomials of the shape $\sum_{n\sim X} \frac{f_k(n)}{n^s}$. The underlying setting in the aforementioned memoir has the natural advantage that their exponential sums are supported on long intervals, making it easier to relate them to Dirichlet polynomials. However, the short exponential sums $S_{f_{k}}(\alpha, x)$ in (\ref{exponential-fk}) employed herein present additional challenges. To overcome this issue, Fourier analysis is employed. More precisely, an application of Parseval's identity  permits one to transfer the average from the dual space to the physical space, such a manoeuvre enabling one to shift the supported intervals $[x,x+H_1]$ to $[X,2X]$. We then exhibit a more flexible application of \cite[Proposition 5.1(i)]{MRTI} combined when required with the use of the Matom\"aki-Radziwi{\l}{\l} theorem.

The next lemma is an extension of \cite[Proposition 5.1 (i)]{MRTI}.

\begin{lemma}[Bounding exponential sums by mean values of Dirichlet polynomials]\label{exponential-dirichlet}
Let $1\leq H \leq  H_1/2\leq X$ be  large numbers, and let $f_k:[X,2X] \to\C$ be the function defined in (\ref{f_k}). Suppose that $1\leq a\leq q$ are coprime integers, and $\eta,\beta$ are real numbers satisfying $|\beta|\ll \eta \leq 1/2$. Then we have
\begin{multline*}
	\int_X^{2X} \int_{\beta-\frac{1}{H}}^{\beta+\frac{1}{H}} \Bigabs{ \sum_{y<n\leq y+H_1} f_k(n) e(an/q) e(n\theta)}^2\rd \theta \rd y  \\
	\ll  \frac{d_2(q)^4H_1}{q}\sup_{q=q_0q_1} \int_I \Bigbrac{\sum_{\chi \mrd {q_1}} \Bigabs{\sum_{n\sim X/q_0} \frac{f_k(q_0n) \chi(n)}{n^{1/2+it}}}}^2 \rd t \\
+\Big( H_1\Big(\eta +\frac{1}{|\beta| H}\Big)^{2}+H\Big) \int_\R \Bigbrac{H^{-1}\sum_{x<n\leq x+H} |f_k(n)|}^2\rd x,
\end{multline*}
where  
\[
I= \set{t\in \R :|\beta| (\eta X-H) \leq |t| \leq |\beta| (\eta^{-1}X+H)}.
\]

\end{lemma}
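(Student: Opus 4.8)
The plan is to combine three ingredients: a character decomposition linearising the twist $e(an/q)$; a Fourier-analytic identity in $y$ that replaces the moving window $[y,y+H_1]$ by the fixed interval $[X,2X]$; and the Mellin--stationary-phase analysis underlying \cite[Proposition 5.1 (i)]{MRTI}, run with the localisation governed by the parameter $\eta$, together with the Matom\"aki--Radziwi{\l}{\l} theorem where a Dirichlet-polynomial moment has to be returned to the physical side.

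First I would linearise the twist and remove the $y$-window. Partitioning $n$ by $d:=(n,q)$ and writing $q_1:=q/d$, $n=dm$, on the locus $(n,q)=d$ one has $(m,q_1)=1$ and $e(an/q)=e(am/q_1)=\phi(q_1)^{-1}\sum_{\chi\bmod q_1}\chi(m)\,c_{a,q_1}(\chi)$ with $\bigabs{c_{a,q_1}(\chi)}\le q_1^{1/2}$ (the factor $\chi(m)$ enforcing $(m,q_1)=1$). Expanding the square and applying Cauchy--Schwarz over the $d_2(q)$ divisors and over the characters reduces matters --- at the cost of a bounded power of $d_2(q)$ and of replacing $\sum_{d\mid q}$ by $\sup_{q=q_0q_1}$ --- to bounding $\int_X^{2X}\int_{\beta-1/H}^{\beta+1/H}\bigabs{T_\chi(y,\theta)}^2\rd\theta\,\rd y$ for each $\chi\bmod q_1$, where $T_\chi(y,\theta):=\sum_{y/q_0<m\le (y+H_1)/q_0}f_k(q_0m)\chi(m)e(q_0m\theta)$ and the sum over $\chi$ is restored at the end via $\sum_\chi\lvert\cdot\rvert^2\le(\sum_\chi\lvert\cdot\rvert)^2$. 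Since $\{m:\,y/q_0<m\le(y+H_1)/q_0\}=\{m:\,q_0m-H_1\le y<q_0m\}$, the map $y\mapsto T_\chi(y,\theta)$ equals $\sum_m c_m 1_{[q_0m-H_1,\,q_0m)}$ with $c_m=f_k(q_0m)\chi(m)e(q_0m\theta)$; as $f_k$ is supported in $[X,2X]$ one may replace $\int_X^{2X}$ by $\int_\R$, and Plancherel in $y$ gives $\int_\R\bigabs{T_\chi(y,\theta)}^2\rd y=\int_\R\bigabs{\widehat{1_{[0,H_1)}}(\xi)}^2\bigabs{\mathcal S_\chi(\theta-\xi)}^2\rd\xi$ with $\mathcal S_\chi(\psi):=\sum_{m\sim X/q_0}f_k(q_0m)\chi(m)e(q_0m\psi)$. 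Substituting $\psi=\theta-\xi$ and swapping the integrals leads to
\[
\int_X^{2X}\int_{\beta-1/H}^{\beta+1/H}\bigabs{T_\chi}^2\,\rd\theta\,\rd y\;\le\;\int_\R\bigabs{\mathcal S_\chi(\psi)}^2\,W(\psi)\,\rd\psi,\qquad W(\psi):=\int_{\beta-1/H}^{\beta+1/H}\bigabs{\widehat{1_{[0,H_1)}}(\theta-\psi)}^2\rd\theta,
\]
and one checks $W(\psi)\ll\min\bigl(H_1,\ (H(\psi-\beta)^2)^{-1}\bigr)$ with $\int_\R W=2H_1/H$.

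The core step is then to estimate $\int_\R\bigabs{\mathcal S_\chi(\psi)}^2 W(\psi)\rd\psi$, following \cite[Proposition 5.1 (i)]{MRTI}. After replacing $1_{[X/q_0,2X/q_0]}$ by a smooth majorant, Mellin inversion represents $\mathcal S_\chi(\psi)=\tfrac1{2\pi}\int_\R (X/q_0)^{1/2+it}J\bigl(\tfrac12+it,X\psi\bigr)G_\chi\bigl(\tfrac12+it\bigr)\rd t$, with $G_\chi(s)=\sum_m f_k(q_0m)\chi(m)m^{-s}$ and $J(s,\lambda)=\int_0^\infty\Theta(u)e(\lambda u)u^{s-1}\rd u$; stationary phase shows that, up to rapidly decaying tails, $J(\tfrac12+it,X\psi)$ is supported in $\lvert t\rvert\asymp X\lvert\psi\rvert$ with $\lvert J\rvert\ll (X\lvert\psi\rvert)^{-1/2}$. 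I would split the $\psi$-integral into the resonant range $\lvert\psi-\beta\rvert\le\eta\lvert\beta\rvert$ and its complement. On the resonant range $\lvert\psi\rvert\asymp\lvert\beta\rvert$, so the associated frequencies satisfy $\lvert t\rvert\in[\,c\lvert\beta\rvert\eta X,\,C\lvert\beta\rvert\eta^{-1}X\,]\subseteq I$; inserting the Mellin representation, exploiting the orthogonality in $t$ coming from the $\theta$- and $\psi$-integrations (to avoid a lossy Cauchy--Schwarz) and using $W\le 2H_1/H$, $\int_\R W=2H_1/H$, one obtains --- after summing over $\chi$ --- the first term of the lemma. On the complementary range, where $W(\psi)\ll (H(\psi-\beta)^2)^{-1}$, together with the degenerate regime $\lvert\psi\rvert X\lesssim1$ (which forces $\lvert\beta\rvert H\lesssim1$), the cruder Plancherel bound combined with $\sum_m\bigabs{f_k(q_0m)}^2\le H\int_\R\bigl(H^{-1}\sum_{x<n\le x+H}\lvert f_k(n)\rvert\bigr)^2\rd x$ --- and the Matom\"aki--Radziwi{\l}{\l} theorem where a surviving Dirichlet moment must be bounded on the physical side --- yields the error term $\bigl(H_1(\eta+(\lvert\beta\rvert H)^{-1})^2+H\bigr)\int_\R\bigl(H^{-1}\sum_{x<n\le x+H}\lvert f_k(n)\rvert\bigr)^2\rd x$.

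The hard part will be this last step: one cannot invoke \cite[Proposition 5.1 (i)]{MRTI} as a black box --- its frequency window is rigidly a $\log$-power wide, whereas here the width is the free parameter $\eta$ --- so its Mellin--stationary-phase argument has to be re-run, the localisation tuned so as to produce exactly the range $I=\{t:\lvert\beta\rvert(\eta X-H)\le\lvert t\rvert\le\lvert\beta\rvert(\eta^{-1}X+H)\}$ and the error exponent $\eta+(\lvert\beta\rvert H)^{-1}$, while the $t$-orthogonality is used to keep the coefficient $H_1/q$ in place of the much larger $XH_1/(q_0H)$ that a direct Cauchy--Schwarz would give, and the Gauss-sum and Cauchy--Schwarz losses accumulated in the first step are kept within $d_2(q)^4$.
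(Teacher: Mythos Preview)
Your route is different from the paper's. The paper keeps the twist $e(an/q)$ throughout, applies a smooth Fourier majorant in the \emph{frequency} variable $\theta$ (not Plancherel in $y$), and after Plancherel in $\theta$ obtains a double physical integral in $(y,w)$ with the sum localized to $|n-w|\le H$ by the cutoff $\varphi$. The key device is then to swap the $y$- and $w$-integrals: for $y\in[w-H_1+H,\,w-H]$ the constraint $y<n\le y+H_1$ is redundant, so the inner sum is $y$-independent and the $y$-integral simply produces the clean factor $H_1$, while the two boundary strips in $y$ give precisely the isolated $+H$ error term. What remains is $\frac{H_1}{H^2}\int_w\bigl|\sum_n f(n)\varphi((n-w)/H)e(\beta n)\bigr|^2\,dw$, and this is exactly the quantity bounded by \cite[(71)]{MRTI}, which already carries the free localisation parameter $\eta$; the paper quotes it directly. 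The character decomposition is performed only at the very end, pointwise on the resulting Dirichlet polynomial, via \cite[Lemma~2.9]{MRTI}. Your arrangement (characters first, Plancherel in $y$, weight $W$) is the Fourier-dual organisation and can be made to work, but it forces you to redo the stationary-phase content of \cite[(71)]{MRTI} rather than cite it.

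Two points in your sketch would not produce the stated bound as written. First, the range $I$ does not arise from splitting the $\psi$-integral at $|\psi-\beta|=\eta|\beta|$: since $W$ is already concentrated at $\psi\approx\beta$ with width $O(1/H)$, stationary phase puts $|t|\asymp X|\beta|$ throughout the bulk of the $\psi$-mass, and the factor $\eta$ enters instead as the chosen width of the \emph{$t$-truncation} in the Mellin representation (wider $t$-window $\Rightarrow$ smaller non-stationary tail), exactly as inside the proof of \cite[Proposition~5.1(i)]{MRTI}. Second, the Matom\"aki--Radziwi{\l}{\l} theorem plays no role here: the physical-side error term $(\eta+1/(|\beta|H))^2\int(\cdots)^2$ falls out directly from those non-stationary Mellin tails (and, in the paper's arrangement, the $+H$ piece from the $y$-edge strips), with no need to convert any Dirichlet moment back via a short-interval black box. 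Finally, your inequality ``$W\le 2H_1/H$'' is false --- the correct pointwise bound is $W\le H_1$; only the total mass is $2H_1/H$.
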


\begin{proof}
	First, we set $f(n) = f_k(n) e(an/q)$. Suppose that $\varphi:\R \to \R$ is a smooth even function supported on $[-1,1]$, equals to $1$ on the interval $[-1/10,1/10]$, and its Fourier transform $\hat \varphi (\theta) =\int_\R \varphi(y)e(-\theta y)\rd y$ obeys the bound $|\hat\varphi(\theta)| \gg 1$ for $|\theta|\leq 1$. Given any number $y\in[X,2X]$ one has
	\begin{align*}
		\int_{\beta-\frac{1}{H}}^{\beta+\frac{1}{H}} &\Bigabs{ \sum_{y<n\leq y+H_1} f_k(n) e(an/q) e(n\theta)}^2\rd \theta 
\\
&\ll \int_\R  \Bigabs{ \sum_{y<n\leq y+H_1} f(n)  e(n\theta)}^2 |\hat\varphi(H(\theta-\beta))|^2 \rd\theta
\\
&=\int_\R \Bigabs{ \int_\R \sum_{y<n\leq y+H_1} f(n) e(\beta Hw) \varphi(w) e(\theta(n-Hw))\rd w}^2 \rd\theta.	
	\end{align*}
Making the change of variables $n-Hw\to w$, one finds that the above integral equals
\[
=H^{-2} \int_\R \Bigabs{ \int_\R \sum_{y<n\leq y+H_1} f(n) \varphi\Big(\frac{n-w}{H}\Big) e(\beta (n-w)) e(\theta w)\rd w}^2 \rd\theta.
\]
It then follows from 	Plancherel's identity that the preceding line equals
\[
 H^{-2} \int_\R \Bigabs{ \sum_{y<n \leq y+H_1} f(n) \varphi\Big(\frac{n-w}{H}\Big) e(\beta n)}^2\rd w.
\]
Since the smooth function $\varphi$ vanishes outside of the interval $[-1,1]$, we may assume that the above sum runs over integers $n$ satisfying $|n-w|\leq H$. Consequently,
\begin{multline*}
\int_X^{2X} \int_{\beta-\frac{1}{H}}^{\beta+\frac{1}{H}} \Bigabs{ \sum_{y<n\leq y+H_1} f_k(n) e(an/q) e(n\theta)}^2\rd \theta \rd y \\
\ll H^{-2} \int_X^{2X} \int_\R \Bigabs{ \twosum{y<n\leq y+H_1}{w-H<n\leq w+H} f(n) \varphi\Big(\frac{n-w}{H}\Big) e(\beta n)}^2 \rd w\rd y.	
\end{multline*}	
We observe that the function inside the above integral vanishes outside of the set $w\in[y-H,y+H_1+H]$. Therefore, by switching the order of integration and recalling the assumption $H_1\geq H$ one may deduce that the preceding line is bounded above by a constant times
\begin{align}\label{eq17}
H^{-2}& \int_{X/2}^{3X} \int_{w-H_1-H}^{w+H} \Bigabs{\twosum{y<n\leq y+H_1}{w-H<n\leq w+H} f(n) \varphi\Big(\frac{n-w}{H}\Big) e(\beta n)}^2\rd y\rd w \nonumber
\\
\ll& I_{1}+I_{2}+H^{-2} \int_{X/2}^{3X} \int_{w-H_1+H}^{w-H} \Bigabs{\sum_{n} f(n) \varphi\Big(\frac{n-w}{H}\Big) e(\beta n)}^2\rd y\rd w, 
\end{align}
wherein the terms $I_{1}$ and $I_{2}$ are defined as
$$I_{1}=H^{-2} \int_{X/2}^{3X} \int_{w-H_{1}-H}^{w-H_{1}+H} \Bigabs{\sum_{w-H<n\leq w+H} f_{k}(n)}^{2} \rd y\rd w $$ and 
$$I_{2}=H^{-2} \int_{X/2}^{3X} \int_{w-H}^{w+H} \Bigabs{\sum_{w-H<n\leq w+H} f_{k}(n) }^2\rd y\rd w.$$
The reader may note that whenever $y\in [w-H_1+H,w-H]$ then one has $[w-H, w+H]\subset[y, y+H_1]$, such an remark being implicitely employed to remove the conditions on $n$ in the sum inside the integral on the third term in (\ref{eq17}). 

We first observe that one trivially has
\begin{equation*}I_{1}+I_{2}\ll H^{-1} \int_{X/2}^{3X}  \Bigabs{\sum_{w-H<n\leq w+H} f_{k}(n)}^{2} \rd w\ll H\int_{\mathbb{R}} \Bigabs{H^{-1}\sum_{w<n\leq w+H} f_{k}(n)}^{2} \rd w.\end{equation*}
The remaining of the proof shall then be devoted to estimate the third summand in (\ref{eq17}). In an analogous manner as above,
\begin{multline*}
	H^{-2} \int_{X/2}^{3X} \int_{w-H_1-H}^{w+H} \Bigabs{\sum_n f(n) \varphi\Big(\frac{n-w}{H}\Big) e(n\beta)}^2\rd y\rd w \\
	\ll \frac{H_1}{H^2} \int_{X/2}^{3X}\Bigabs{\sum_n f(n) \varphi\Big(\frac{n-w}{H}\Big)  e(n\beta)}^2\rd w.
\end{multline*}

On employing \cite[(71)]{MRTI} and recalling that $f(n)=f_k(n) e(an/q)$, one may obtain
	\begin{multline*}
\int_{X/2}^{3X}\Bigabs{\sum_n f(n) \varphi(\frac{n-w}{H}) e(n\beta)}^2\rd w
	\ll
	\frac{1}{|\beta|^{2}}\int_J \Bigbrac{ \int_{t-|\beta|H}^{t+|\beta|H} \Bigabs{\sum_{n} \frac{f_k(n) e\bigbrac{\frac{an}{q}}}{n^{1/2+it'}} } \rd t'}^{2}\rd t \\+ \Big(\eta +\frac{1}{|\beta| H}\Big)^{2}\int_\R \Bigbrac{\sum_{x<n\leq x+H} |f_k(n)|}^2\rd x,
	\end{multline*}
where $J =\set{t\in\R: \eta|\beta|X \leq |t| \leq |\beta|X/\eta}$. Next, we apply \cite[Lemma 2.9]{MRTI}, noting that $\supp(f_k)\subseteq[X,2X]$, to deduce
\[
\Bigabs{\sum_{n\sim X} \frac{f_k(n) e(an/q)}{n^{1/2+it}}} \leq \frac{d_2(q)}{q^{1/2}} \sum_{q= q_0q_1} \sum_{\chi \mrd {q_1}} \Bigabs{\sum_{n\sim X/q_0} \frac{f_k(q_0n) \chi(n)}{n^{1/2+it}}}. 
\]
It then follows from Cauchy-Schwarz inequality that
\begin{multline*}
	\int_J \Bigbrac{ \int_{t-|\beta|H}^{t+|\beta|H} \Bigabs{\sum_{n\sim X} \frac{f_k(n) e(an/q)}{n^{1/2+it'}} } \rd t'}^{2}\rd t \\
	\ll |\beta|^{2}H^{2}\frac{d_2(q)^4}{q} \sup_{q=q_0q_1} \int_I \Bigbrac{\sum_{\chi \mrd {q_1}} \Bigabs{\sum_{n\sim X/q_0} \frac{f_k(q_0n) \chi(n)}{n^{1/2+it}}}}^2 \rd t,
	\end{multline*}
	where $I =\cup_{t\in J}[t-|\beta|H,t+|\beta|H]$.
The lemma follows by combining everything together.

\end{proof}

\begin{proposition}[Minor-arc estimate]\label{prop5.2} 
Suppose that $\log^{1000k\log k}X\leq H_2\leq H_1^{1-\varepsilon_{1}}$ and $H_{1} \leq X^{1-\eps_{2}}$ for some $\eps_1,\eps_2>0$. Let $f_k:[X,2X]\to\R_{\geq0}$ be as in (\ref{f_k}) and $\minor\subset \T$ as in (\ref{major-minor}). Then we have
\begin{multline*}
\sup_{\alpha\in\T} X^{-1}\int_X^{2X} \int_{\minor\cap[\alpha-\frac{1}{2H_2},\alpha+\frac{1}{2H_2}]} \Bigabs{\sum_{y<n\leq y+H_1} f_k(n) e(n\gamma)}^2\rd\gamma \rd y\\
\ll P_{1^{(1)}}^{-1/250} H_1 \log^{2k-2}X .
\end{multline*}
\end{proposition}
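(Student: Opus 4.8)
The plan is to fix $\alpha\in\T$ and reduce the problem to understanding the exponential sum on short subintervals of the minor arc $\minor\cap[\alpha-\tfrac{1}{2H_2},\alpha+\tfrac{1}{2H_2}]$, splitting according to the size of $\norm{q\gamma}$ for the Farey denominator $q\leq Q$ closest to $\gamma$. Write $\gamma=a/q+\beta$. We distinguish three regimes: (i) the \emph{highly irrational} case $|\beta|>H_1^{-1}$; (ii) the \emph{intermediate} case $H_1^{-1}<|\beta|\leq H_2^{-1}$ — wait, rather $|\beta|\le H_2^{-1}$ always here since we are inside a window of length $H_2^{-1}$, so the split is $|\beta|\leq H_1^{-1}$ versus $H_1^{-1}<|\beta|\leq H_2^{-1}$; and for the latter a further dyadic decomposition of the $\beta$-range into $O(\log X)$ pieces. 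In each regime one applies Lemma~\ref{exponential-dirichlet} with an appropriate choice of the parameter $\eta$: for regime (i) take $\eta\asymp |\beta|$ so that the ``physical space'' error term $H_1(\eta+\tfrac{1}{|\beta|H})^2+H$ is under control (here $H$ is a secondary parameter, chosen as a small power of $\log X$, namely $H=P_{1^{(1)}}$ or a comparable quantity), while for regime (ii) one decomposes $[\alpha-\tfrac{1}{2H_2},\alpha+\tfrac{1}{2H_2}]\cap\{|\beta|\sim\Delta\}$ into short subintervals $I_i=[\gamma_i-\eta_i\gamma_i,\gamma_i+\eta_i\gamma_i]$ with $\eta_i\to0$, as sketched in the introduction, and applies Lemma~\ref{exponential-dirichlet} to each $I_i$.

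The core estimate in every case is a bound for the Dirichlet-polynomial mean value
\[
\int_I \Bigbrac{\sum_{\chi\mrd{q_1}} \Bigabs{\sum_{n\sim X/q_0}\frac{f_k(q_0n)\chi(n)}{n^{1/2+it}}}}^2\rd t,
\]
where $I$ is an interval of length $\asymp|\beta|H$ located at height $\asymp |\beta|X$ (up to the $\eta$-distortion). This is precisely where the large-values machinery enters: one decomposes the Dirichlet polynomial $\sum_{n\sim X/q_0} f_k(q_0n)\chi(n)n^{-1/2-it}$ using the prime-factor structure built into $S_k$ (factor out primes from $[P_{1^{(1)}},Q_{1^{(1)}}]$, $[P_2,Q_2]$, $[P_3,Q_3]$) exactly as in \cite[Section 5]{MRTII}, obtaining a product of a short ``prime'' polynomial $Q_{v,j}(t,\chi)$ and a remainder $R_{v,j}(t,\chi)$, and then splits $I$ according to whether $|Q_{v,j}|$ is small or large for $j=1,2,3$. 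On the set where all $|Q_{v,1}|$ are small one uses Lemma~\ref{mean-value}; on the set where some $|Q_{v,1}|$ is large (but $|Q_{v,2}|$ small) one uses the combinatorial bound from \cite{Sun} for the relevant $E_2$ quantity; and on the remaining set $\mathcal T_3$ one passes to a $1$-spaced subset, applies Lemma~\ref{dirichlet-prime} to bound $Q_{v,3}$ by $\exp(-c(\log X)^{\delta})$, and uses \cite[Theorem 9.6]{IK} together with the large-values counting \cite[Lemma 4.5(ii)]{Sun}. The crucial feature for the intermediate regime is that when $|\beta|\sim\Delta$ the interval $I$ sits at height $\asymp \Delta X$, which for $\Delta$ not too small is itself short compared to $X$, so that the $\varphi(q)T/X^2$-type term in Lemma~\ref{mean-value} becomes a genuine gain — and this gain, of size roughly $\Delta X$ against the trivial $X$, is exactly what compensates for the loss $\eta_i^{-1}$ incurred by decomposing the $\beta$-window into $\eta_i$-short pieces.

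After assembling the three regimes, the total bound takes the shape
\[
\ll \Bigbrac{\text{(large-values savings)} \;+\; P_{1^{(1)}}^{-1/8}}\cdot H_1\log^{2k-2}X
\;+\;\Bigbrac{\text{physical-space error}},
\]
and one checks that the physical-space error terms, summed over the $O(\log X)$ dyadic pieces and all $q\leq Q$, are dominated by $P_{1^{(1)}}^{-1/250}H_1\log^{2k-2}X$ provided $H=P_{1^{(1)}}$ is chosen and $H_2\leq H_1^{1-\eps_1}$ (so the $(1/(|\beta|H))^2$ term is acceptable since $|\beta|\leq H_2^{-1}$ forces $|\beta|^{-1}\geq H_2$, compared against $H_1$ — here one uses $H_2\le H_1^{1-\eps_1}$ critically). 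The exponent $1/250$ (rather than $1/8$) absorbs the $\log X$ loss from the dyadic decomposition and the distortion factors $\eta_i^{\pm1}$.

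\textbf{Main obstacle.} The hardest point is the intermediate regime $H_1^{-1}<|\beta|\le H_2^{-1}$: here $I$ has length $\asymp|\beta|H \in (H/H_1, H/H_2)$, which is \emph{tiny}, so the Dirichlet-polynomial mean value over $I$ is not directly controlled by the standard Matom\"aki--Radziwi\l\l\ input, and the naive bound via Plancherel (extending $I$ to $\R$) only gives the trivial $H_1\sum|f_k|^2$. One must genuinely exploit that $I$ is centered at height $\asymp|\beta|X$ and engineer the decomposition into $\eta_i$-short arcs so that the sharper large-values estimate on each arc — where $T\asymp |\beta|X$ is substantially smaller than the full range — beats the $\eta_i^{-1}$ loss. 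Making this trade-off quantitatively work uniformly across the whole intermediate range, and in particular verifying that the decomposition parameters $\eta_i$ can be chosen consistently with the hypotheses of Lemma~\ref{exponential-dirichlet} (the constraint $|\beta|\ll\eta\le 1/2$) and of the large-values lemmas, is the delicate heart of the argument and the step that does not generalize \cite[Lemma 5.1]{MRTII} verbatim.
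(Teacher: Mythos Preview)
Your overall strategy is right --- apply Lemma~\ref{exponential-dirichlet} on short sub-intervals of the $\beta$-range and control the resulting Dirichlet-polynomial mean values via the Matom\"aki--Radziwi\l\l\ decomposition --- and your ``Main obstacle'' paragraph correctly identifies the heart of the matter. But the proposal has a genuine gap in the parameter accounting that would make the argument fail as written.

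The issue is that you only invoke the prime interval $[P_{1^{(1)}},Q_{1^{(1)}}]$, whereas the paper crucially uses \emph{two} different choices $(P_1,Q_1)$ depending on the size of $|\beta|$. When $|\beta|$ ranges over the full minor-arc window $[Q^2/(qH_1),\,2\log X/H_2]$, the sub-interval decomposition with $\eta_i\to 0$ produces roughly $\eta_i^{-1}\log H_1$ pieces. If you use only $(P_{1^{(1)}},Q_{1^{(1)}})$, the saving from the large-values machinery on each piece is at best $P_{1^{(1)}}^{-1/8}=(\log X)^{-\Psi(X)/8}$, which tends to $1$ as $\Psi(X)\to 0$ and cannot absorb even a single power of $\log X$, let alone $\log H_1\gg\Phi(X)\log\log X$. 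Your claimed compensation mechanism --- that $T\asymp|\beta|X$ is small so the $\varphi(q)T/X^2$ term in Lemma~\ref{mean-value} gives a gain of size $\Delta$ --- is not how the saving actually arises: in the $I_1$-estimate (cf.\ (\ref{I1})) the term $TQ_1/X$ sits \emph{inside} a bracket next to an $O(1)$ term, so once $TQ_1/X\ll 1$ there is no further gain from making $\Delta$ smaller; the real saving is the prefactor $P_1^{-2\delta_1}$. The paper therefore splits at $|\beta|=Q_{1^{(2)}}^{-2}$: for $|\beta|\leq Q_{1^{(2)}}^{-2}$ one takes $\eta=(\log X)^{-5}$ (giving $O((\log X)^6)$ pieces) and uses $(P_1,Q_1)=(P_{1^{(2)}},Q_{1^{(2)}})$, whose saving $P_{1^{(2)}}^{-c}=(\log X)^{-c\cdot 10000k\log k}$ easily beats the $(\log X)^6$ loss; for $|\beta|\geq Q_{1^{(2)}}^{-2}$ (a range of bounded logarithmic length, so only $O(P_{1^{(1)}}^{3/500})$ pieces with $\eta=P_{1^{(1)}}^{-1/200}$) one uses $(P_{1^{(1)}},Q_{1^{(1)}})$, and here the modest saving $P_{1^{(1)}}^{-1/8}$ suffices. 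The constraint $TQ_1\ll X$ is what forces the split point: with $Q_1=Q_{1^{(2)}}$ one needs $T\ll X/Q_{1^{(2)}}$, i.e.\ $|\beta|\ll Q_{1^{(2)}}^{-2}$ roughly.

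A secondary confusion: the parameter $H$ in Lemma~\ref{exponential-dirichlet} is the reciprocal window length, so it must scale as $H\asymp(\eta|\beta|)^{-1}$ on each sub-interval; your suggestion $H=P_{1^{(1)}}$ (a fixed small quantity) would make $1/(|\beta|H)$ enormous near the major-arc boundary $|\beta|\sim H_1^{-1}$ and blow up the physical-space error. The paper takes $H=\beta_j^{-1}(\log X)^5$ in the first sub-case and $H=P_{1^{(1)}}^{1/200}/\rho_j$ in the second.
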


\begin{proof}
Given a frequency $\alpha\in\T$, one may apply Dirichlet approximate theorem to deduce that  there are coprime integers $1\leq a\leq q\leq Q$ such that $|\alpha-a/q|\leq 1/qQ$. We write $\beta=\alpha-a/q$ and distinguish between two cases, namely $0\leq |\beta|\leq \frac{\log X}{H_2}$ and $\frac{\log X}{H_2}\leq |\beta|\leq \frac{1}{qQ}$.

\subsection*{Case 1 (for highly irrational frequencies $\gamma$)} When $\frac{\log X}{H_2}\leq |\beta|\leq \frac{1}{qQ}$, one has
\begin{multline*}
\int_X^{2X} \int_{\minor\cap[\alpha-\frac{1}{2H_2},\alpha+\frac{1}{2H_2}]} \Bigabs{\sum_{y<n\leq y+H_1} f_k(n) e(n\gamma)}^2\rd\gamma \rd y\\
\ll  \int_X^{2X} \int_{[\beta-\frac{1}{2H_2}, \beta+\frac{1}{2H_2}]} \Bigabs{\sum_{y<n\leq y+H_1} f_k(n) e(an/q) e(n\gamma)}^2\rd\gamma \rd y.
\end{multline*}

We may then apply Lemma \ref{exponential-dirichlet} with $\eta=(\log X)^{-10}$ and $H=2H_2$ to get
\begin{align}\label{beta}
	\sup_{\frac{\log X}{H_2}\leq |\beta|\leq \frac{1}{qQ}}&\int_X^{2X} \int_{[\beta-\frac{1}{2H_2}, \beta+\frac{1}{2H_2}]} \Bigabs{\sum_{y<n\leq y+H_1} f_k(n) e(an/q) e(n\gamma)}^2\rd\gamma \rd y \nonumber
\\
	\ll& \frac{H_1d_2(q)^4}{q} \sup_{q=q_0q_1} \int_{X/(2H_2 \log^{9}X)}^{2X(\log X)^{10}/qQ} \Bigbrac{\sum_{\chi \mrd {q_1}} \Bigabs{\sum_{n\sim X/q_0} \frac{f_k(q_0n) \chi(n)}{n^{1/2+it}}}}^2 \rd t  \nonumber
\\
	 &+H_1(\log X)^{-2} \int_X^{2X} \bigbrac{(2H_2)^{-1}\sum_{x<n\leq x+2H_2}f_k(n)}^2\rd x. 
\end{align}
We first square out the second term and distinguish between the diagonal and off-diagonal terms to see that
\begin{equation}\label{jj}
\sum_{x\sim X} \Bigbrac{\sum_{x<n\leq x+2H_2}f_k(n)}^2 \ll H_2\sum_{X/2\leq n\leq 3X} f_k^2(n) +H_2\sum_{0<|m|\ll H_2} \sum_{X/2\leq n\leq 3X} f_k(n) f_k(n+m). 
\end{equation}
The contribution from the diagonal terms is bounded by $O(XH_2^2(\log X)^{2k-4})$ since $f_k\leq k^{(1+\eps')k\log\log X}$ pointwise and $H_2\geq (\log X)^{1000k\log k}$, while \cite[Theorem 1.1]{MRTII} entails that the off-diagonal terms in (\ref{jj}) are $O(XH_2^{2}(\log X)^{2k-2})$, which when inserted in (\ref{beta}) delivers an error term of size $O(XH_1(\log X)^{2k-4})$. Moreover, we also notice that  the application of \cite[(27)]{MRTII} to the first term in (\ref{beta}) shows that it is $O(d_2(q)^4 P_{1^{(1)}}^{-1/8} H_1 X (\log X)^{2k-2})$, which is acceptable in view of the definition of $P_{1^{(1)}}$.

\subsection*{Case 2 (for  irrational frequencies $\gamma$)}
When instead $0\leq |\beta|\leq \frac{\log X}{H_2}$, we may focus our attention on the positive interval $[0, \frac{\log X}{H_2}]$, the analysis of its negative counterpart being analogous. We first observe that whenever $\gamma\in\minor$ is in the interval $[\alpha-\frac{1}{2H_2},\alpha+\frac{1}{2H_2}]$, the above remarks in conjunction with the triangle inequality yield
\[
\frac{Q^{2}}{qH_{1}}\leq |\gamma-a/q|\leq |\gamma-\alpha| + |\alpha-a/q| \leq  \frac{1/2+\log X}{H_{2}}.
\]
Consequently, one obtains
\begin{align*}
\int_X^{2X} &\int_{\minor\cap[\alpha-\frac{1}{2H_2},\alpha+\frac{1}{2H_2}]} \Bigabs{\sum_{y<n\leq y+H_1} f_k(n) e(n\gamma)}^2\rd\gamma \rd y\\
&\ll \int_X^{2X} \int_{\frac{Q^2}{qH_1}}^{ \frac{2\log X}{H_2}} \Bigabs{\sum_{y<n\leq y+H_1} f_k(n) e(an/q) e(n\gamma)}^2\rd\gamma \rd y.\nonumber
\end{align*}
We shall first assume $H_{1}\geq 2Q_{1^{(2)}}^{2}Q^{2}/q$ and bound the contribution of the non-empty interval $[\frac{Q^2}{qH_1},\frac{1}{Q_{1^{(2)}}^{2}}]$. We dissect such an interval by introducing the frequencies $\beta_{j}=\frac{Q^2}{qH_1}\big(1+(\log X)^{-5}\big)^{j}$ for $0\leq j\leq J$, where $J= C(\log X)^{5}\log\Big(\frac{qH_{1}}{Q^{2}Q_{1^{(2)}}^{2}}\Big)$ and $C>0$ is some suitably large constant. It then transpires that
\[
\Big[\frac{Q^2}{qH_1},\frac{1}{Q_{1^{(2)}}^{2}}\Big] \subseteq \bigcup_j \big[\beta_{j}(1-(\log X)^{-5}), \beta_j(1+(\log X)^{-5})\big],
\]
the number of such sub-intervals being $O(\log^6X)$. For each sub-interval we introduce
$$
\mathcal{I}_{j}= \set{t\in \mathbb{R}:|\beta_{j}| \big(\eta X-(\log X)^{5}/\beta_{j}\big) \leq |t| \leq |\beta_{j}|  \big(\eta^{-1}  X+(\log X)^{5}/\beta_{j}\big)}
$$
and apply Lemma \ref{exponential-dirichlet} with $\eta=(\log X)^{-10}$ and $H= \beta_{j}^{-1}(\log X)^5$ and the argument in (\ref{jj}) to see that
\begin{multline*}
\int_X^{2X} \int_{[\beta_j-\beta_j/(\log X)^5, \beta_j+\beta_j/(\log X)^5]} \Bigabs{\sum_{y<n\leq y+H_1} f_k(n) e(an/q) e(n\gamma)}^2\rd\gamma \rd y \\
\ll \frac{H_1d_2(q)^4}{q} \sup_{q=q_0q_1} \int_{\mathcal{I}_j} \Bigbrac{\sum_{\chi \mrd {q_1}} \Bigabs{\sum_{n\sim X/q_0} \frac{f_k(q_0n) \chi(n)}{n^{1/2+it}}}}^2 \rd t + XH_1(\log X)^{2k-12}.	
\end{multline*}
Summing the above line over $j$ and applying Cauchy-Schwarz then yields
\begin{align}\label{eq18}&\int_X^{2X} \int_{\big[\frac{Q^2}{qH_1},\frac{1}{Q_{1^{(2)}}^{2}}\big]} \Bigabs{\sum_{y<n\leq y+H_1} f_k(n) e(an/q) e(n\gamma)}^2\rd\gamma \rd y  
\\
& \ll H_1d_2(q)^4(\log X)^{6} \sup_{q=q_0q_1} \sum_{\chi \mrd {q_1}} \int_{\mathcal{I}} \Bigabs{\sum_{n\sim X/q_0} \frac{f_k(q_0n) \chi(n)}{n^{1/2+it}}}^2 \rd t + XH_1(\log X)^{2k-6}\nonumber,\end{align}
with
$$\mathcal{I}=\set{t\in \mathbb{R}:\frac{XQ^{2}}{2qH_{1}(\log X)^{10}}  \leq |t| \leq \frac{ X(\log X)^{11} }{Q_{1^{(2)}}^{2}} }$$ and where we employed an analogous argument to that in (\ref{beta}) to estimate the second summand.

Since in view of the assumption at the statement of the lemma one then has $H_1\leq X (\log X)^{-10000k\log k}$  and $q\leq Q\leq (\log X)^{100k\log k}$,  it suffices to prove that
\begin{align*}
d_2(q)^4 (\log X)^6 \sum_{\chi\mmod{q_1}} \int_{ (\log X)^{10000k\log k}}^{ X(\log X)^{11}/Q^{2}_{1^{(2)} } } \Bigabs{ \sum_{n\sim \frac{X}{q_0}}\frac{f_k(q_0n)\chi(n)}{n^{1/2+it}}}^2\rd t 
\ll X(\log X)^{-100k\log k}.
\end{align*}
This follows in the same spirit as the final part of the proof of Proposition \ref{long-short}, requiring only a shift of $\Re s$ from 1 to 1/2, and hence an application of \cite[Lemma 5.2]{MRTII} in lieu of Lemma \ref{mean-value} (see also the discussion after \cite[(27)]{MRTII}). Moreover, the inequalities on the definitions of the subsets $\mathcal{T}_{1}(\chi),\mathcal{T}_{2}(\chi),\mathcal{T}_{3}(\chi)$ in (\ref{pq-11}) should include a factor of $e^{v/2D}$ as in the ones right above \cite[(28)]{MRTII} . Thus, we provide a sketch of the proof here.

First, setting $P_1 =P_{1^{(2)}}, Q_1=Q_{1^{(2)}}$ and $T=X(\log X)^{11}/Q^{2}_{1^{(2)}}$ in (\ref{pq-11}), taking $q=q_1$ in (\ref{prop3.3-reduction}), and recalling that $W=(\log X)^{10000k\log k}$ in the proof of Proposition \ref{long-short}, we deduce from the appropiate version of the bound (\ref{prop3.3-reduction}) that
\[
 \sum_{\chi \mmod{q_1}} \int_{(\log X)^{10000k\log k}}^{X(\log X)^{11}/Q^{2}_{1^{(2)}}} \Bigabs{ \sum_{n\sim \frac{X}{q_0}}\frac{f_k(q_0n)\chi(n)}{n^{1/2+it}}}^2\rd t \ll  P_{1^{(2)}}^{-1/6}X (\log X)^{2k-2}+I_1+I_2+I_3.
\]
A mild modification of the estimates (\ref{i-2}) and (\ref{i-3}) demonstrate that the contributions from the terms $I_2$ and $I_3$ are acceptable. While, for $I_1$,  it follows from (\ref{I1}) for the choice of parameters  $D=P_{1^{(2)}}^{1/6}, \delta_1=1/4-1/100,$ with $q_1q_0\leq (\log X)^{100k\log k}$ and $T=X(\log X)^{11}/Q^{2}_{1^{(2)}}$ that
\begin{align*}
I_1 &\ll P_{1^{(2)}}^{-1/6+1/49} q_{1}TQ_{1^{(2)}} +XP_{1^{(2)}}^{-1/6+1/50}q_{0}^{-1}(\log X)^{2k-2}
\\
&\ll P_{1^{(2)}}^{-1/6+1/50} X (\log X)^{2k-2}.\end{align*}
Recalling the definition of $P_{1^{(2)}}$ in (\ref{pq-1}), we then conclude that the estimate for the integral over the interval $[\frac{Q^2}{qH_1},\frac{1}{Q_{1^{(2)}}^{2}}]$ is acceptable.

If instead $H_{1}<2Q_{1^{(2)}}^{2}Q^{2}/q$, it in turn entailing when applied in conjunction with the assumption $H_{2}\leq H_{1}^{1-\eps }$ that $H_{2}< Q_{1^{(2)}}^{2}\log X$, the integral range $[\frac{1}{Q_{1^{(2)}}^{2}},\frac{2\log X}{H_{2}}]$ is not empty. Under such circumstances we dissect $[\frac{1}{Q_{1^{(2)}}^{2}},\frac{2\log X}{H_{2}}]$ and introduce $\rho_{j}=\frac{1}{Q_{1^{(2)}}^{2}}\big(1+P_{1^{(1)}}^{-1/200}\big)^{j}$ for $0\leq j\leq J'$, where $J'$ is some natural number satisfying
$$
J'= CP_{1^{(1)}}^{1/200}\log\Big(\frac{2Q_{1^{(2)}}^{2}\log X}{H_{2}}\Big) \ll P_{1^{(1)}}^{3/500}
$$ 
and $C>0$ is some suitably large constant. It then transpires that
\[
\Big[\frac{1}{Q_{1^{(2)}}^{2}},\frac{2\log X}{H_{2}}\Big] \subset \bigcup_{j\leq J'} \big[\rho_{j}(1-P_{1^{(1)}}^{-1/200}), \rho_j(1+P_{1^{(1)}}^{-1/200})\big].
\]
For each sub-interval we introduce
$$
\mathcal{I}_{j}'= \set{t\in \mathbb{R}:|\rho_{j}|  \big(\eta'X-P_{1^{(1)}}^{1/200}/\rho_{j}\big) \leq |t| \leq |\rho_{j}| \big(\eta'^{-1}X+P_{1^{(1)}}^{1/200}/\rho_{j}\big)}
$$
and apply Lemma \ref{exponential-dirichlet} with $\eta'=P_{1^{(1)}}^{-1/100}$ and $H= P_{1^{(1)}}^{1/200}/\rho_j$ to see that
\begin{multline*}
\int_X^{2X} \int_{[\rho_{j}(1-P_{1^{(1)}}^{-1/200}), \rho_j(1+P_{1^{(1)}}^{-1/200})\big]} \Bigabs{\sum_{y<n\leq y+H_1} f_k(n) e(an/q) e(n\gamma)}^2\rd\gamma \rd y\\
\ll \frac{H_1d_2(q)^4}{q} \sup_{q=q_0q_1} \int_{\mathcal{I}_{j}'} \Bigbrac{\sum_{\chi \mrd {q_1}} \Bigabs{\sum_{n\sim X/q_0} \frac{f_k(q_0n) \chi(n)}{n^{1/2+it}}}}^2 \rd t  + XH_1(\log X)^{2k-2}P_{1^{(1)}}^{-1/100}.		
\end{multline*}
Summing the above line over $j\leq J'$ and applying Cauchy-Schwarz then yields
\begin{align}\label{eq18}&\int_X^{2X} \int_{\big[\frac{1}{Q_{1^{(2)}}^{2}},\frac{2\log X}{H_{2}}\big]} \Bigabs{\sum_{y<n\leq y+H_1} f_k(n) e(an/q) e(n\gamma)}^2\rd\gamma \rd y  \nonumber
\\
 \ll& H_1d_2(q)^4P_{1^{(1)}}^{3/500} \sup_{q=q_0q_1} \sum_{\chi \mrd {q_1}} \int_{\mathcal{I}'} \Bigabs{\sum_{n\sim X/q_0} \frac{f_k(q_0n) \chi(n)}{n^{1/2+it}}}^2 \rd t \\
 &+ XH_1(\log X)^{2k-2}P_{1^{(1)}}^{-1/250},\nonumber\end{align}
where
$$\mathcal{I}'=\set{t\in \mathbb{R}:\frac{X}{2Q_{1^{(2)}}^{2}P_{1^{(1)}}^{1/100}}  \leq |t| \leq \frac{ 4P_{1^{(1)}}^{1/100} X\log X}{H_{2}} }.$$ 

Choosing $P_{1}=P_{1^{(1)}}$ and $Q_{1}=Q_{1^{(1)}}$ in (\ref{pq-11}), a modification as above of (\ref{prop3.3-reduction}) (see also \cite[(27)]{MRTII}), together with the condition $H_2\geq (\log X)^{1000k\log k}$, yields 
\[
\sup_{q=q_0q_1}\sum_{\chi\mmod{q_1}}\int_{X(\log X)^{-\Phi(X)}}^{X(\log X)^{10}/H_2} \Bigabs{\sum_{n \sim X/q_0}\frac{f_k(q_0n)\chi(n)}{n^{1/2+it}}}^2\rd t  \ll P_{1^{(1)}}^{-1/8} X (\log X)^{2k-2}.
\]
The contribution from the first term in (\ref{eq18}) is thereby acceptable.  Consequently, combining the estimates for all cases concludes the proof of the lemma.
\end{proof}


\section{The large values estimates} \label{sec6}

In this section, we aim to establish a mean value estimate for the divisor function without substantial logarithmic power losings. Firstly, we notice that a direct application of Parseval's identity introduces some logarithmic loss, namely
\begin{align*}
\int_X^{2X} \int_\T \bigabs{\sum_{x<n\leq x+H_1} f_k(n) e(n\alpha)}^2\rd \alpha\rd x &\leq \int_X^{2X} \sum_{x<n\leq x+H_1} f_k^2(n)\rd x\\
 &\ll XH_1(\log X)^{(1+\eps')k\log k+k-1},	
\end{align*}
which is not acceptable due to the weak saving obtained in the minor arc analysis (see Proposition \ref{prop5.2}).

 \begin{proposition}[Large values estimates]\label{prop6.1}
Let $\eps>0$ and $J\leq H_{2}$. Assume that $\log^{1000k\log k}X\leq H_2\leq H_1 (\log X)^{-20000 k\log k}$ and $H_1\leq X^{1-\eps} $. Let $\alpha_1,\dots,\alpha_J$ be a $1/H_2$-separated sequence of $\T$ and let $f_k:[X,2X]\to\R_{\geq 0}$ be as defined in (\ref{f_k}). Then there exists a function $\psi_{1}(X)$ not depending on $\Psi(X)$ such that $\psi_{1}(X)\rightarrow 0$ with $X\rightarrow \infty$ and
\begin{align*}
\sum_{1\leq j\leq J} X^{-1}\int_X^{2X} &\int_{[\alpha_j-\frac{1}{2H_2},\alpha_j+\frac{1}{2H_2}]} \Bigabs{ \sum_{x<n\leq x+H_1} f_k(n) e(n\alpha)}^2\rd \alpha \rd x
\\
&\ll J^{1/4} H_1(\log X)^{2k-2+\psi_{1}(X)}.
\end{align*}

 \end{proposition}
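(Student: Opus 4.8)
The plan is to follow the strategy of \cite[Proposition 3.6]{MRTII}: one first transfers the short exponential sums to mean values of Dirichlet polynomials of the shape $\sum_{n\sim X/q_{0}}f_{k}(q_{0}n)\chi(n)n^{-1/2-it}$ by means of Lemma \ref{exponential-dirichlet}, and then one invokes a Hal\'asz--Montgomery large values estimate for such polynomials, which relies in turn on the prime-polynomial factorisation of $f_{k}$ and on the majorant for $d_{k}$ built into the set $S_{k}$.

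First I would, for each $1\leq j\leq J$, use Dirichlet's theorem to write $\alpha_{j}=a_{j}/q_{j}+\beta_{j}$ with $1\leq a_{j}\leq q_{j}\leq Q$, $(a_{j},q_{j})=1$ and $|\beta_{j}|\leq 1/(q_{j}Q)$, and split the interval $[\alpha_{j}-\tfrac{1}{2H_{2}},\alpha_{j}+\tfrac{1}{2H_{2}}]$ into its intersection with the major arc around $a_{j}/q_{j}$ and its intersection with $\minor$. On the major-arc piece, Lemma \ref{lem4.2} together with the bound \eqref{eq4.5} and Lemma \ref{lem4.1} yields, for the corresponding part of $X^{-1}\int_{X}^{2X}\int\bigabs{S_{f_{k}}(\alpha,x)}^{2}\rd\alpha\,\rd x$, an estimate $\ll H_{1}(\log X)^{2k-2}q_{j}^{-2+\eps}$; since for a given major arc only $O(1)$ of the intervals $[\alpha_{j}-\tfrac{1}{2H_{2}},\alpha_{j}+\tfrac{1}{2H_{2}}]$ meet it (they are $1/H_{2}$-separated and $Q^{2}/H_{1}\ll 1/H_{2}$ for $X$ large), summing over $j$ produces a total $\ll H_{1}(\log X)^{2k-2}\sum_{q\leq Q}q^{-1+\eps}\ll H_{1}(\log X)^{2k-2+o(1)}$, which is admissible. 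On the minor-arc piece I would proceed as around \eqref{eq18} in the proof of Proposition \ref{prop5.2}, dissecting it into $O((\log X)^{C})$ subintervals $[\beta(1-(\log X)^{-C}),\beta(1+(\log X)^{-C})]$ and applying Lemma \ref{exponential-dirichlet} to each with $\eta=(\log X)^{-10}$ and $H=\beta^{-1}(\log X)^{C}$. The resulting error terms involve $X^{-1}\int_{X}^{2X}\bigbrac{H^{-1}\sum_{x<n\leq x+H}|f_{k}(n)|}^{2}\rd x$, which by the construction of $S_{k}$ (the restriction $\Omega(n)\leq(1+\eps')k\log\log X$ and the control of the large prime factors, used exactly as in \eqref{jj}) together with \cite[Theorem 1.1]{MRTII} for the off-diagonal contribution is $\ll(\log X)^{2k-2}$ whenever $H\geq(\log X)^{1000k\log k}$. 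The one genuinely new feature compared with Proposition \ref{prop5.2} is that, summing over all $J$ frequencies, the fixed choice $\eta=(\log X)^{-10}$ would accumulate a loss of size $J$ rather than $J^{1/4}$, so $\eta$ and the dissection scale must be allowed to depend mildly on $J$.

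After these reductions the left-hand side of the proposition is bounded, up to a factor $(\log X)^{O(1)}$, by $\tfrac{H_{1}}{X}\sum_{r}\sup_{q=q_{0}q_{1}}\sum_{\chi\mmod{q_{1}}}\int_{\mathcal{I}_{r}}\bigabs{\sum_{n\sim X/q_{0}}f_{k}(q_{0}n)\chi(n)n^{-1/2-it}}^{2}\rd t$, where the intervals $\mathcal{I}_{r}$ have mutual overlap $\ll(\log X)^{O(1)}$ (using $H_{2}\leq X^{1-\eps}$) and there are $\ll J(\log X)^{O(1)}$ of them. The remaining task is then precisely the large values estimate for the Dirichlet polynomials $\sum_{n\sim X/q_{0}}\frac{f_{k}(q_{0}n)\chi(n)}{n^{1/2+it}}$, of the form $R^{1/4}\,(X/q_{0})\,(\log X)^{2k-2+o(1)}$ for a sum over $R$ essentially $1$-separated frequencies, with the $o(1)$ an absolute quantity \emph{not} depending on $\Psi(X)$. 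This is the content of \cite[Proposition 3.6]{MRTII}, whose proof carries over to the present $f_{k}$: one factorises $f_{k}(q_{0}n)$ through its prime factors in one of the ranges $[P_{j},Q_{j}]$ whose endpoints do not involve $\Psi(X)$ (for instance $[P_{1^{(2)}},Q_{1^{(2)}}]$), splits the frequencies according to whether the associated prime polynomial is small or not, in the former case gains from a log-free mean value theorem (Lemma \ref{mean-value}, or its half-line analogue \cite[Lemma 5.2]{MRTII}, the relevant $L^{2}$-sum being $\ll(\log X)^{2k-2}$ by virtue of the majorant of \cite[Lemma 6.1]{MRTII}), and in the latter case controls the number of frequencies through a Hal\'asz-type count resting on zero-density information for $L(s,\chi)$ (\cite[Theorem 9.6]{IK}, \cite[Lemma 4.5]{Sun}); balancing these contributions against the trivial term over dyadic heights produces the saving $R^{1/4}$. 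Applying this with $R\ll J(\log X)^{O(1)}$ and absorbing the powers of $\log X$ yields the bound $J^{1/4}H_{1}(\log X)^{2k-2+\psi_{1}(X)}$ with an absolute $\psi_{1}(X)\to0$, as required.

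The main obstacle is twofold. First, $\eta$ and the dissection scale must be tuned so that, after summation over all $J$ frequencies, the accumulated physical-space error terms cost only a factor $J^{1/4}$ rather than $J$; this is precisely the reason Proposition \ref{prop5.2} cannot be quoted verbatim. Secondly --- and this is the more delicate point --- one must run the Hal\'asz--Montgomery argument using only the prime ranges of $S_{k}$ with endpoints independent of $\Psi(X)$ while still obtaining both the genuine $R^{1/4}$ saving and the clean exponent $(\log X)^{2k-2}$; the latter is exactly where the second condition in the definition of $S_{k}$, via the majorant of \cite[Lemma 6.1]{MRTII}, is indispensable.
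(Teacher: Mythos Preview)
Your approach and the paper's diverge at the very first step. The paper never invokes Lemma~\ref{exponential-dirichlet} here. Instead, after the shift $\alpha\mapsto\alpha_j+\theta$, it applies Gallagher's lemma \cite[Lemma~1]{gal} to the inner integral, which converts $\int_{-1/(2H_2)}^{1/(2H_2)}\bigl|\sum_{x<n\leq x+H_1}f_k(n)e(n\alpha_j)e(n\theta)\bigr|^2\rd\theta$ directly into $\int\bigl|H_2^{-1}\sum_{y<n\leq y+H_2}f_k(n)e(n\alpha_j)\bigr|^2\rd y$ over $y$ in an interval of length $\asymp H_1$ (plus two boundary terms over intervals of length $H_2$). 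After Fubini in $x$ this is $\ll(H_1/X)\int_{X/2}^{3X}\bigl|H_2^{-1}\sum_{x<n\leq x+H_2}f_k(n)e(n\alpha_j)\bigr|^2\rd x$, and one is \emph{exactly} in the setting of the estimate on \cite[p.~830]{MRTII}, which gives $\sum_{j\leq J}\bigl(\int_X^{2X}\lvert\ldots\rvert^2\rd x\bigr)^{1/2}\ll J^{5/8}X^{1/2}(\log X)^{k-1+\psi_1(X)/2}$; hence the $j$-th largest value is $\ll j^{-3/4}X(\log X)^{2k-2+\psi_1(X)}$, and summing $j^{-3/4}$ produces $J^{1/4}$. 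The boundary terms are handled the same way for $h\geq(\log X)^{10000k\log k}$ and trivially (via $f_k\leq k^{(1+\eps')k\log\log X}$) below that threshold, the hypothesis $H_1\geq H_2(\log X)^{20000k\log k}$ absorbing the latter. No major/minor dichotomy, no dissection, no Dirichlet polynomials appear.

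Your route has a real gap precisely where $J^{1/4}$ must arise. Once Lemma~\ref{exponential-dirichlet} has been applied, the numerator $a_j$ is absorbed into the character sum and the Dirichlet polynomial integral depends only on $(q_j,\beta_j)$; distinct well-separated $\alpha_j$'s with the same $q_j$ and nearby $\beta_j$'s collapse to the \emph{same} integral, so summing over $j$ costs a full factor of the multiplicity rather than anything like $J^{1/4}$. The fix you propose --- tuning $\eta$ with $J$ --- does not balance: shrinking $\eta$ to suppress the physical-space error by $J^{-3/4}$ multiplies the number of sub-intervals, and neither the main Dirichlet term nor the accumulated $(\log X)^{O(1)}$ losses then close up to $(\log X)^{2k-2+o(1)}$. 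The large-values input you cite, \cite[Proposition~3.6]{MRTII}, is a statement about the physical-space sums $\sum_{x<n\leq x+H}f_k(n)e(n\alpha_j)$ at $1/H$-separated $\alpha_j$, not about integrals of Dirichlet polynomials over $\ll J$ overlapping $t$-intervals; the correct bridge to it is Gallagher's lemma, which preserves the separated-frequency structure, rather than Lemma~\ref{exponential-dirichlet}, which destroys it.
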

 
\begin{proof}
 	
 As a prelude to the discussion, one may make the change of variables $\alpha=\alpha_j+\theta$ and apply Gallagher's lemma (\cite[Lemma 1]{gal}) to see that
\begin{align}\label{pp}
&X^{-1}\int_X^{2X} \int_{[\alpha_j-\frac{1}{2H_2},\alpha_j+\frac{1}{2H_2}]} \Bigabs{ \sum_{x<n\leq x+H_1} f_k(n) e(n\alpha)}^2\rd \alpha \rd x\nonumber\\
&\ll X^{-1} \int_X^{2X} \int_x^{x+H_1-H_{2}} \Bigabs{H_2^{-1}\sum_{y<n\leq y+H_2} f_k(n) e(n\alpha_j) }^2\rd y \rd x	+I_{2}(j)+I_{3}(j),\nonumber
\\
&\ll H_1/X \int_{X/2}^{3X} \Bigabs{H_2^{-1}\sum_{x<n\leq x+H_2} f_k(n) e(n\alpha_j) }^2\rd x+I_{2}(j)+I_{3}(j),
\end{align} 
where
$$I_{2}(j)= X^{-1} \int_X^{2X} \int_{x-H_{2}}^{x} \Bigabs{H_2^{-1}\sum_{x<n\leq y+H_2} f_k(n) e(n\alpha_j) }^2\rd y \rd x$$ and $$I_{3}(j)= X^{-1} \int_X^{2X} \int_{x+H_1-H_{2}}^{x+H_1} \Bigabs{H_2^{-1}\sum_{y<n\leq x+H_1} f_k(n) e(n\alpha_j) }^2\rd y \rd x.$$ 

We pause momentarily the discussion and note that whenever $H\geq (\log X)^{10000k\log k}$ it transpires by the third formula on \cite[page 830]{MRTII} that
$$ \sum_{1\leq j\leq J} \Bigbrac{\int_{X}^{2X} \bigabs{H^{-1}\sum_{x<n\leq x+H} f_k(n) e(n\alpha_j) }^2\rd x}^{1/2}\ll J^{5/8} X^{1/2} (\log X)^{k-1+\psi_1(X)/2}.$$ It therefore follows for each $1\leq j\leq J$ that the $j$-th largest value of
\begin{equation}\label{po}\int_{X/2}^{3X} \bigabs{H^{-1}\sum_{x<n\leq x+H} f_k(n) e(n\alpha_j) }^2\rd x\end{equation} is $O(j^{-3/4}X(\log X)^{2k-2+\psi_1(X)}).$

We set $L(X)=(\log X)^{10000k\log k}$. We then sum over $j\leq J$ the term $I_{2}(j)$, make the change of variables $h=y+H_{2}-x$  and employ the pointwise bound $f_k\leq k^{(1+\eps')k\log\log X}$ and the preceding conclusion to get
\begin{align}\label{poi}
&\sum_{1\leq j\leq J}I_{2}(j)\ll \int_{0}^{H_{2}}\sum_{1\leq j\leq J}X^{-1}\int_{X}^{2X}\Bigabs{H_2^{-1}\sum_{x<n\leq x+h} f_k(n) e(n\alpha_j) }^2\rd x\rd h \nonumber
\\
\ll & \int_{L(X)}^{H_{2}} \sum_{1\leq j\leq J}j^{-3/4}(\log X)^{2k-2+\psi_1(X)}\rd h\nonumber
\\
&+ J\int_{0}^{L(X)}X^{-1}\int_{X}^{2X}\Bigabs{H_2^{-1}\sum_{x<n\leq x+h} f_k(n) }^2\rd x\rd h\nonumber
\\
\ll & J^{1/4}H_{2}(\log X)^{2k-2+\psi_1(X)}+H_{2}(\log X)^{20000k\log k},
\end{align}
where in the last step we used the assumption $J\leq H_{2}$, an analogous argument delivering
\begin{equation}\label{poi1}\sum_{1\leq j\leq J}I_{3}(j)\ll J^{1/4}H_{2}(\log X)^{2k-2+\psi_1(X)}+H_{2}(\log X)^{20000k\log k}.\end{equation}

We next estimate the contribution from the first summand in (\ref{pp}) by employing the conclusion in (\ref{po}) as above, namely,
\begin{align*}\frac{H_{1}}{X}\sum_{1\leq j\leq J} \int_{X/2}^{3X} \Bigabs{H_2^{-1}\sum_{x<n\leq x+H_2} f_k(n) e(n\alpha_j) }^2\rd x&\ll H_{1}\sum_{1\leq j\leq J}j^{-3/4}(\log X)^{2k-2+\psi_1(X)}
\\
&\ll J^{1/4}H_{1}(\log X)^{2k-2+\psi_1(X)}.
\end{align*}
Recalling that $H_1\geq H_2(\log X)^{20000k\log k}$ and combining (\ref{pp}) with the bounds (\ref{poi}), (\ref{poi1}) and the preceding line thereby concludes the proof.	
 \end{proof}

 
\section{Circle method: Completing the proof of Proposition \ref{reduction}}\label{sec7}
In light of the definition of the major and minor arcs in (\ref{major-minor}) and the exponential sum notation from (\ref{exponential-fk}), the customary orthogonality relation reveals that
\[
\sum_{x< n\leq x+H_1} f_k(n) f_l(n+h) = \Bigset{ \int_\major +\int_\minor} S_{f_{k}}(\alpha,x) \overline{S_{f_{l}}(\alpha,x)} e(h\alpha)\,\rd\alpha.
\]
Thus, by Lemma \ref{lem4.5}, in order to prove Proposition \ref{reduction} it suffices to show that
\[
\sum_{|h|\leq H_2} X^{-1} \int_X^{2X} \Bigabs{ \int_\minor S_{f_{k}}(\alpha,x) \overline{S_{f_{l}}(\alpha,x)} e(h\alpha)\,\rd\alpha} \rd x =o(H_1H_2 (\log X)^{k+l-2}).
\]
We define for further convenience the function $c:[X,2X]\to\mathbb \mathbb{S}^{1}$ by means of the expression
$$  \Big\lvert\int_\minor S_{f_{k}}(\alpha,x) \overline{S_{f_{l}}(\alpha,x)} e(h\alpha)\,\rd\alpha\Big\rvert =c(x)\int_\minor S_{f_{k}}(\alpha,x) \overline{S_{f_{l}}(\alpha,x)} e(h\alpha)\,\rd\alpha. $$
Equipped with the above notation it therefore transpires by Cauchy-Schwarz inequality that it is sufficient to prove 
\begin{align}\label{minor-goal}
\sum_{|h|\leq H_2} \Bigabs{X^{-1} \int_X^{2X} c(x) \int_\minor S_{f_{k}}(\alpha,x) \overline{S_{f_{l}}(\alpha,x)} e(h\alpha)\,\rd\alpha \rd x}^2 =o(H_2H_1^2 (\log X)^{2k+2l-4}).
\end{align}

To such an end we introduce for simplicity an even non-negative Schwartz function $\Phi:\mathbb{R}\rightarrow \mathbb{R}^{+}$ with $\Phi(x)\geq 1$ for all $x\in [-1,1]$ such that the Fourier transform 
$$
\hat{\Phi}(\xi)=\int_{\mathbb{R}}\Phi(x)e(-x\xi)\rd x
$$ is supported on $[-1/2,1/2]$. Then, 
\begin{align*}
\sum_{|h|\leq H_2}& \Bigabs{X^{-1}\int_X^{2X}c(x)\int_\minor S_{f_{k}}(\alpha,x) \overline{S_{f_{l}}(\alpha,x)} e(h\alpha)\,\rd\alpha\rd x}^{2}
\\
&\ll \sum_{h} \Phi(h/H_{2})\Bigabs{X^{-1}\int_X^{2X} c(x) \int_\minor S_{f_{k}}(\alpha,x) \overline{S_{f_{l}}(\alpha,x)} e(h\alpha)\,\rd\alpha \rd x}^{2}.
\end{align*}

By squaring out and switching the order of integration one may find that the above expression equals 
\begin{multline*}
X^{-2}\sum_{h} \Phi(h/H_{2}) \int_X^{2X} c(x) \int_\minor  S_{f_{k}}(\alpha,x) \overline{S_{f_{l}}(\alpha,x)}    e(h\alpha)\,\rd\alpha\,\rd x \\
\cdot \int_X^{2X} \bar{c(y)}\int_\minor \overline{S_{f_{k}}(\gamma,y)}S_{f_{l}}(\gamma,y)  e(-h\gamma)\,\rd\gamma\,\rd y\\
 \ll  X^{-2}\int_X^{2X}\int_{\minor}\lvert S_{f_{k}}(\alpha,x)\rvert \lvert S_{f_{l}}(\alpha,x)\rvert \int_X^{2X}\int_{\minor}\lvert S_{f_{k}}(\gamma,y)\rvert \lvert S_{f_{l}}(\gamma,y)\rvert \\
 \cdot \Bigabs{\sum_{h} \Phi(h/H_{2}) e(h(\alpha-\gamma))} \, \rd\gamma \rd y  \rd\alpha \rd x
\end{multline*}
We employ Poisson summation on the inner sum (see the proof of \cite[Proposition 3.1]{MRTI}) to deduce that 
\begin{align*}
&\sum_{|h|\leq H_2} \Bigabs{\int_X^{2X} c(x)\int_\minor S_{f_{k}}(\alpha,x) \overline{S_{f_{l}}(\alpha,x)} e(h\alpha)\,\rd\alpha\rd x}^{2}
\\
&\ll H_{2}\int_{\minor}\int_X^{2X}\lvert S_{f_{k}}(\alpha,x)\rvert \lvert S_{f_{l}}(\alpha,x)\rvert \int_{\minor\cap [\alpha-\frac{1}{2H_{2}},\alpha+\frac{1}{2H_{2}}] }\int_X^{2X}\lvert S_{f_{k}}(\gamma,y)\rvert \lvert S_{f_{l}}(\gamma,y)\rvert\rd \bf{x} .
\end{align*}
where we wrote $\rd \mathbf{x}=\rd x\rd y \rd\gamma \rd\alpha.$ We then decompose the frequencies $\alpha\in\T$ into intervals of length $\frac{1}{H_2}$ and thus write $I_i=[\frac{i-1}{H_2},\frac{i}{H_2}]$ with $1\leq i\leq H_2$. By noting that when $\alpha\in I_i$, the frequencies $\gamma\in [\alpha-\frac{1}{2H_2},\alpha+\frac{1}{2H_2}]$ must be in $I_{i-1}\cup I_i\cup I_{i+1}$ it follows that
\begin{multline*}
\sum_{|h|\leq H_2} \Bigabs{X^{-1}\int_X^{2X} c(x)\int_\minor S_{f_{k}}(\alpha,x) \overline{S_{f_{l}}(\alpha,x)} e(h\alpha)\,\rd\alpha\rd x}^{2}\\
\ll H_2 \sum_{i=1}^{H_{2}} \Bigbrac{ \int_{\minor\cap I_i} X^{-1} \int_X^{2X} \bigabs{ S_{f_{k}}(\alpha,x) \overline{S_{f_{l}}(\alpha,x)}} \rd x \rd\alpha}^2.
\end{multline*}
Consequently, an application of Cauchy-Schwarz inequality with respect to both $x$ and $\alpha$ yields
\begin{equation}\label{eq4.25}
\sum_{|h|\leq H_2} \Bigabs{X^{-1}\int_X^{2X}c(x)\int_\minor S_{f_{k}}(\alpha,x) \overline{S_{f_{l}}(\alpha,x)} e(h\alpha)\,\rd\alpha\rd x}^{2}\ll H_2 \sum_{i=1}^{H_{2}}a(I_{i})b(I_{i}),
\end{equation}
where
$$a(I_{i})=X^{-1}\int_{X}^{2X}\int_{\minor\cap I_i}\lvert S_{f_{k}}(\alpha,x)\rvert^{2}\rd\alpha \rd x,\  \        b(I_{i})=X^{-1}\int_{X}^{2X}\int_{\minor\cap I_i}\lvert S_{f_{l}}(\alpha,x)\rvert^{2}\rd\alpha \rd x.$$ 
In order to estimate the right side of (\ref{eq4.25}) we employ Holder's inequality to get
\begin{equation}\label{eq4.26}
H_2 \sum_{i=1}^{H_{2}}a(I_{i})b(I_{i})\ll H_2 \big(\sup_{i}\lvert a(I_{i})\rvert\big)^{1/4}|| a||_{l^{7/4}}^{3/4}\cdot || b||_{l^{7/4}}.
\end{equation}
 Recalling the definition $P_{1^{(1)}}= (\log X)^{\Psi(X)}$ in (\ref{pq-1}), we first utilise Proposition \ref{prop5.2} to deduce that
$$\sup_{i}\lvert a(I_{i})\rvert\ll (\log X)^{-\Psi(X)/250} H_1 \log^{2k-2}X.$$
Moreover, Proposition \ref{prop6.1} delivers the bound
$$a(I_{i_{1}})+\cdots+a(I_{i_{J}})\ll J^{1/4} H_1(\log X)^{2k-2+\psi_{1}(X)}$$ for any collection $I_{i_{1}},\ldots, I_{i_{J}}$ of distinct intervals .This implies that the $j$-th largest value of $a(I_j)$ is $O(j^{-3/4 }H_1(\log X)^{2k-2+\psi_{1}(X)})$, the same argument delivering an analogous bound for the $j$-th largest value of $b(I_j)$. By the preceding discussion, it transpires that
$$|| a||_{l^{7/4}}\ll \Bigbrac{\sum_{j\leq H_2}\bigbrac{j^{-3/4 }H_1(\log X)^{2k-2+\psi_{1}(X)}}^{7/4}}^{4/7} \ll   H_1(\log X)^{2k-2+\psi_{1}(X)}$$ and
$$|| b||_{l^{7/4}}\ll  H_1(\log X)^{2l-2+\psi_{1}(X)}.$$
 
We then insert the previous bounds into (\ref{eq4.26}) to derive
\begin{equation*}
H_2 \sum_{i=1}^{H_{2}}a(I_{i})b(I_{i})\ll H_{1}^{2}H_{2}(\log X)^{2k+2l-4+2\psi_{1}(X)-\frac{1}{1000}\Psi(X)},
\end{equation*}
Combining the preceding line with (\ref{eq4.25}) and choosing $\Psi(X)\geq 2001 \psi_{1}(X)$ yields inequality (\ref{minor-goal}) and completes the proof of Proposition \ref{reduction}, as desired.

\bibliographystyle{plain} 

\renewcommand{\bibname}{} 

\bibliography{local_divisor_correlation.bib}

\end{document}